\documentclass[11pt,reqno]{amsart}
\usepackage{amsmath, amsfonts, amssymb}
\usepackage[margin=3cm]{geometry}
\usepackage[dvipsnames, svgnames]{xcolor}
\usepackage{asymptote}
\usepackage{adjustbox, amssymb, enumitem, tikz-cd, multirow}
\usepackage{mark-macros}

\usepackage[pdfusetitle, pdfencoding=auto, psdextra, breaklinks=true]{hyperref}
\usepackage{xcolor}

\allowdisplaybreaks
\renewcommand{\textbf}[1]{{\bfseries\boldmath #1}}
\newcommand{\vocab}[1]{\textbf{\textcolor{BrickRed}{\boldmath #1}}}
\newcommand{\mathvocab}[1]{\textcolor{BrickRed}{#1}}

\definecolor{mylinkcolor}{rgb}{0.0,0.0,0.7}
\definecolor{myurlcolor}{rgb}{0.0,0.0,0.7}
\hypersetup{
 colorlinks,
 urlcolor=myurlcolor,
 citecolor=myurlcolor,
 linkcolor=mylinkcolor,
 breaklinks=true
}

\usepackage{thmtools, thm-restate}

\usepackage[nameinlink]{cleveref}

[name=Theorem]
[name=Claim, sibling=theorem]
\declaretheorem{lemma}[name=Lemma, sibling=theorem]
\declaretheorem{proposition}[name=Proposition, sibling=theorem]
\declaretheorem{observation}[name=Observation, sibling=theorem]
\declaretheorem{corollary}[name=Corollary, sibling=theorem]
\declaretheorem{definition}[style=definition, name=Definition, sibling=theorem]
\declaretheorem{remark}[style=definition, name=Remark, sibling=theorem]
[style=definition, name=Example, sibling=theorem]
[style=definition, name=Open Problem, sibling=theorem]
\declaretheorem{conjecture}[style=definition, name=Conjecture, sibling=theorem]

\usepackage[backend=biber, style=alphabetic, sorting=nyt,
maxnames=20, maxalphanames=20, backref, url=true, doi=true]{biblatex}
\DeclareSourcemap{
  \maps[datatype=bibtex]{
    \map[overwrite]{
      \step[fieldsource=doi, final]
      \step[fieldset=url, null]
      \step[fieldset=eprint, null]
    }  
  }
}
\DeclareSourcemap{
  \maps[datatype=bibtex]{
    \map{
      \step[fieldset=issn, null]
    }
  }
}
\DefineBibliographyStrings{english}{%
  backrefpage = {$\uparrow$ p.},%
  backrefpages = {$\uparrow$ pp.}%
}
\defbibheading{bibliography}[\refname]{\section*{#1}}

\title{Extremal $t$-intersecting Families of Permutations
for Large $t$}
\author{Pitchayut Saengrungkongka}

\address{
  Department of Mathematics,
  Massachusetts Institute of Technology,
  Cambridge,
  MA 02139,
  USA
}
\email{pitchayutbcc168@gmail.com}

\begin{document}
\begin{abstract}
A set of permutations of $\{1,2,\dots,n\}$ 
is $t$-intersecting 
if any two permutations agree on at least $t$ inputs.
A recent work by Kupavskii, 
in the spirit of the Erd\H os--Ko--Rado Theorem, 
shows that for all $t\leq n-O\left(\frac{n\log\log n}{\log n}\right)$, 
every $t$-intersecting family of permutations of $\{1,2,\dots,n\}$ 
with the maximum size must be isomorphic to the set
$$\mathcal A_k = \{\sigma : \sigma(i)=i\text{ for at least } t+k
\text{ indices } i\in\{1,2,\dots,t+2k\}\}$$
for some $k$. 
By refining Kupavskii's spread approximation technique, 
we prove that this conclusion holds for a wider range of 
$t\leq n-n^{5/7+\eps}$.
\end{abstract}
\maketitle

\section{Introduction}
One of the classical results in extremal set theory 
is the Erd\H os--Ko--Rado Theorem \cite{erdos_ko_rado}.
It states that for all positive integers $n$
and $r$ such that $n\geq 2r$ and for any family 
$\calF\subseteq \binom{[n]}r$ such that any 
two sets in $\calF$ have a nonempty intersection,
we have $|\calF| \leq \binom{n-1}{r-1}$.
In particular, the equality is achieved when $\calF$ consists 
of all sets containing $1$.
Since then, this result has been generalized 
in many different directions, leading to numerous 
variants of problems about intersecting families.

One way to generalize the Erd\H os--Ko--Rado Theorem 
is to consider $t$-intersecting families.
A family $\calF\subseteq \binom{[n]}r$ is said to be 
\vocab{$t$-intersecting} if any two elements $A,B\in\calF$ 
have $|A\cap B|\geq t$.
After a series of partial results 
by Frankl, Wilson, and Frankl and F\"uredi 
\cite{sets_partial_1, sets_partial_2, sets_partial_3}, 
Ahlswede and Khachatrian \cite{sets} completely determined 
the maximum size of a $t$-intersecting family of $\binom{[n]}r$.
In particular, they proved that any $t$-intersecting family 
$\calF\subseteq \binom{[n]}r$ with maximum size must be 
of the form 
$$\calA_k = 
\left\{A\in \binom{[n]}r : |A\cap [t+2k]|\geq t+k \right\}$$
for some $k$, up to permutation of elements.

Another direction of study is to replace $\binom{[n]}r$ 
with different families.
One of the most well-studied families is the family of
permutations of $[n]$.
Let $\Sigma_n$ be the collection of permutations of $[n]$.
Two permutations $\pi,\sigma\in \Sigma_n$
\vocab{intersect in at least $t$ elements}
if $\pi(i) = \sigma(i)$ for at least $t$ values of $i$.
A family $\calF\subseteq \Sigma_n$ is \vocab{$t$-intersecting} 
if any two permutations in $\calF$ intersect 
in at least $t$ elements.
We are interested in determining the maximum
size of a $t$-intersecting family $\calF\subseteq\Sigma_n$.

One class of $t$-intersecting families is the family 
$$\mathvocab{\calA_k} 
= \{\sigma\in\Sigma_n : \sigma(i)=i 
\text{ for at least } t+k\text{ indices }i\in[t+2k]\}.$$
Note that $\calA_0$ is the trivial family 
consisting of all permutations that fix $1,2,\dots,t$
(so it has size $(n-t)!$).
Analogous to Ahlswede and Khachatrian's theorem,
we have the following conjecture. 
\begin{conjecture}
\label{conj:main}
If $\calF\subseteq\Sigma_n$ is a $t$-intersecting family,
then $|\calF|\leq \max_k |\calA_k|$.
Moreover, if $|\calF| = \max_k |\calA_k|$, then 
$\calF = \sigma \calA_k\tau$ for some $k$ 
and permutations $\sigma$ and $\tau$.
\end{conjecture}
In particular, by comparing $|\calA_k|$ 
across different $k$, the conjecture implies that 
$\calA_0$ is the maximum family for all $t\leq \frac n2$.

The most recent progress towards resolving this 
conjecture is due to Kupavskii, who showed 
\cite[Thm.~1]{one_minus_eps}  that the conjecture holds for 
$t\leq (1-o(1))n$, together with a sharp stability result.
Upon closer inspection (cf. \Cref{rem:loglog}),
his result holds for all
$t\leq n - O\left(\frac{n\log\log n}{\log n}\right)$.
By enhancing this result with several new ideas,
namely the iterative spread approximation 
and a more delicate counting argument,
we improve the range of $t$ in which this conjecture 
holds while maintaining the same stability result.
\begin{restatable}{theorem}{main}
\label{thm:not_too_close}
For any $\eps > 0$, there exists $n_0=n_0(\eps)$
such that for any $n>n_0$ and $t< n-n^{5/7+\eps}$,
both of the following hold.
\begin{enumerate}[label=(\alph*)]
\item 
If $\calF\subseteq\Sigma_n$ is a $t$-intersecting family,
then $|\calF| \leq \max_k |\calA_k|$.
\item 
If $\calF\subseteq\Sigma_n$ is a $t$-intersecting family 
not contained in $\sigma\calA_k\tau$
for any $\sigma,\tau\in\Sigma_n$,
then $|\calF| \leq \left(1-\tfrac 1e + o(1)\right) 
\max_k |\calA_k|$.
\end{enumerate}
\end{restatable}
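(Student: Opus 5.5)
We will follow the spread approximation framework of Kupavskii, refining it in two places: the approximation will be done iteratively, and the final counting will be sharper. Identify each permutation $\sigma$ with the perfect matching $\{(i,\sigma(i))\}$ in $K_{n,n}$. A $t$-intersecting family is then a family of matchings, any two of which share at least $t$ edges. For a partial matching $S$ let $\calF(S)=\{\sigma\in\calF: S\subseteq\sigma\}$. Following \cite{one_minus_eps}, call $\calF(S)$ $r$-spread if $|\calF(S\cup T)|\le r^{-|T|}|\calF(S)|$ for every partial matching $T$ disjoint from $S$. The standard peeling argument then gives a decomposition of $\calF$. We repeatedly choose a maximal $S$ that violates spreadness, and we obtain a family $\calS$ of partial matchings of size at most some $q$, together with a small remainder $\calF_{\rm rem}$, such that every $\calF(S)$ with $S\in\calS$ is $r$-spread. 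The spread lemma (via the sharp threshold, or sunflower-type bound for matchings) then shows that $\calS$ is itself $t$-intersecting. The reason is that two spread families $\calF(S_1),\calF(S_2)$ with $|S_1\cap S_2|<t$ would contain members intersecting in fewer than $t$ edges. The remainder satisfies $|\calF_{\rm rem}|\le r^{-(q+1)}(n-?)!$-type bounds, which are negligible compared with $(n-t)!$ once $r$ is a large power of $n-t$.

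The main new step is to make the approximation iterative. A single round of Kupavskii's approximation requires $r\gg q\log(\cdot)$ relative to $n-t$. This is what forces $n-t\ge n\log\log n/\log n$. Instead we will run the approximation on $\calS$ itself. Since $\calS$ is a $t$-intersecting family of partial matchings of size at most $t+q$, with $q$ small compared with $n-t$, we reapply the decomposition to it with a fresh spreadness parameter. In each round the sizes of the sets shrink toward $t+O(\text{small})$ and the loss is controlled. After a bounded number of rounds, which depends on $\eps$, we obtain a $t$-intersecting family $\calS^*$ of matchings of size at most $t+k_0$, with $k_0$ a constant (or very slowly growing), which covers $\calF$ up to an error of $o\bigl((n-t)!\,\cdot\max_k|\calA_k|/(n-t)!\bigr)$. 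The exponent $5/7$ should come out of balancing the error terms. Each round costs a factor like $(t+q)^{j}/r^{j}$ against the gain $(n-t-j)!/(n-t)!\approx (n-t)^{-j}$. I expect this bookkeeping, namely choosing $r$ and $q$ in each round so that every error stays below $\max_k|\calA_k|$ whenever $n-t\ge n^{5/7+\eps}$, to be the main obstacle.

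Once $\calF$ is covered by $\{\sigma:\sigma\supseteq S\}$ for $S\in\calS^*$, we reduce to a finite problem. We count
$$|\calF|\le \sum_{S\in\calS^*}(n-|S|)!+\text{error},$$
and use a weighted Ahlswede--Khachatrian-type bound for $t$-intersecting families of bounded-size sets. Here a set of size $t+j$ has weight $(n-t-j)!\approx (n-t)!(n-t)^{-j}$. The optimum of this weighted problem is attained by the generating families of the $\calA_k$, whose sizes match $|\calA_k|$ up to $1+o(1)$; this gives part (a) asymptotically. For exactness, we argue by stability. If $\calS^*$ is not contained in the generator of some $\calA_k$ (up to the relabelling $\sigma\,\cdot\,\tau$), then the weighted count loses a constant factor. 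For exact equality and for part (b) we then argue directly. If $\calF\not\subseteq\sigma\calA_k\tau$, pick $\pi\in\calF$ outside the extremal structure. Every member of $\calF$ lying in the main part must $t$-intersect $\pi$. A derangement-type count shows that only a fraction of about $1-1/e$ of the permutations extending a generator can meet $\pi$ appropriately. This yields $|\calF|\le(1-\tfrac1e+o(1))\max_k|\calA_k|$, and part (a) together with its equality case follows.
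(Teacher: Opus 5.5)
\textbf{Gap 1: the first approximation step fails for $u=n^{5/7+\varepsilon}$ (write $u=n-t$).} Your iteration does not fix this, because the idea the paper uses to get past it is missing. If spreadness is measured from the empty set, the terminating set $S$ gives $|\mathcal F_{\mathrm{rem}}|\le r^{|S|}(n-|S|)!$ with $|S|>q\ge t$.
\begin{itemize}
\item The factor is $r^{+|S|}$, not $r^{-(q+1)}$, so enlarging $r$ only makes things worse.
\item The spread lemma needs $r\gtrsim\log n$, which forces $t\log\log n\lesssim u\log u$. This is exactly Kupavskii's barrier.
\item Re-running the same decomposition on $\mathcal S$ with a fresh parameter does not help. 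You never obtain the first $\mathcal S$, and every round pays the same $r^{\Omega(t)}$ loss.
\end{itemize}
The paper instead alternates approximation with a \emph{density boost}. Start from the trivial approximation $\mathcal S=\mathcal F$. Suppose $\mathcal S$ has uniformity $t+(1-\delta)^iu$ and is only $t'$-intersecting, with $t'=t-\sqrt n$. Fix one $S\in\mathcal S$. Every $F\in\mathcal F[\mathcal S]$ contains a $t'$-subset of $S$, so by pigeonhole some $X$ with $|X|=t'$ has $|\mathcal F(X)|\ge\binom{|S|}{t'}^{-1}|\mathcal F[\mathcal S]|$. This costs only $(n/(1-\delta)^iu)^{O((1-\delta)^iu)}$. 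In the next round one takes $S_j\supseteq X_j$ maximal with $|\mathcal F_j(S_j)|\ge r^{-(|S_j|-|X_j|)}|\mathcal F_j(X_j)|$, where $r=10u/\sqrt n$. Spreadness is then paid for only on the $O(u)$ elements beyond $X_j$, and the uniformity drops to $t+(1-\delta)^{i+1}u$. This runs down to $u>n^{1/2+\varepsilon}$. A final round with $0.1u$-spreadness then gives a genuinely $t$-intersecting $\mathcal S$ of uniformity $t+0.001u$. In particular, the exponent $5/7$ does not come from this bookkeeping at all.

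\textbf{Gap 2: your target uniformity is wrong, and the final reduction is the real crux.} You cannot aim for uniformity $t+O(1)$, or even slowly growing. For $u=n^{5/7+\varepsilon}$ the largest $\mathcal A_k$ has $k\approx t/u\approx n^{2/7-\varepsilon}$. On the other hand, for a $t$-intersecting family of partial permutations, the peeling layers satisfy $|\mathcal W_j|\le(j+1)\binom tj$ when $j\ll t^{1/4}$. Since $\binom tj(u-j)!$ decays rapidly below $j\approx t/u$, such a family covers only $o(\max_k|\mathcal A_k|)$ permutations. So your scheme already fails for $\mathcal F=\mathcal A_k$.

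The uniformity must stay well above $t+t/u$. The paper keeps $t+0.001u$ and disposes of the layers with $k>t^{2/7-\varepsilon/2}$ by the crude count $|\mathcal W_k|\le(200\max(k,t/k))^k$. At that uniformity, your ``weighted Ahlswede--Khachatrian-type bound'' is not an available theorem; it is the heart of the problem.
\begin{itemize}
\item The union bound $\sum_S(n-|S|)!$ is useless for an arbitrary $t$-intersecting $\mathcal S$. For example, all extensions of a fixed $t$-set by $k$ further pairs give $\binom uk\,u!$, far above $\max_j|\mathcal A_j|$.
\item One therefore needs the peeling structure $|\mathcal W_k(X)|\le k^{t+k-|X|}$. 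Even with it, Kupavskii's two-set counting stops at exponent $3/4$.
\end{itemize}
The paper reaches $5/7$ as follows. Take the largest $k\le t^{2/7-\varepsilon/2}$ with $|\mathcal W_k|>t^{-1/7}\binom tk$. Show that this layer contains $A_1,\dots,A_r$ with $|A_1\cap\dots\cap A_r|=t-(r-2)k$; otherwise $|\mathcal W_k|\lesssim\binom tk\,k^{(3r-1)/(r-1)}/t$. Balancing $k^3/t$ against $t^{-1/7}$ is exactly where $5/7$ comes from. Finally, count sets meeting every $A_i$ in at least $t$ elements, to show that almost all of $\mathcal T_k$ lies in the $(t+2k)$-set $A_1\cup\dots\cup A_r$.

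Your closing derangement argument for (b) matches the paper's and is fine once this structure is in hand.
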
 

\subsection{Previous Work and Context}
\Cref{conj:main} has seen significant progress over 
the past 50 years.

The first progress towards proving \Cref{conj:main} was
due to Frankl and Deza \cite{very_close}, who
proved \Cref{conj:main} when 
$n$ is sufficiently large in terms of $n-t$.
Cameron and Ku, and Larose and Malvenuto 
\cite{permutations_1a, permutations_1b}
proved \Cref{conj:main} for $t=1$.

The next significant breakthrough for higher $t$ came when
Ellis, Friedgut, and Pilpel \cite{log_log} showed that 
\Cref{conj:main} holds for $t\leq c\log\log n$ for 
some constant $c>0$.
Their proof used Hoffman's bound 
(upper bound on the size of an independent set of a graph) 
with weights chosen using representation theory 
of the symmetric group $S_n$.
For about ten years, there was no further improvement 
until Ellis and Lifshitz \cite{log}  
enhanced Ellis, Friedgut, and Pilpel's result 
using Fourier-analytic techniques and
showed that \Cref{conj:main} holds for all 
$t\leq c\frac{\log n}{\log\log n}$ for some constant $c>0$.

Kupavskii and Zakharov \cite{spread_approx}
made a breakthrough by showing that \Cref{conj:main}
holds for all $t\leq c\frac{n}{\log^2 n}$ for some constant $c>0$,
using a novel technique called the
\vocab{spread approximation method}.
This method is purely combinatorial and is based on 
a previous breakthrough in the sunflower problem 
by Alweiss, Lovett, Wu, and Zhang \cite{sunflower}.
It has been used to give results on variants of Erd\H os--Ko--Rado 
problems in various settings, such as 
intersecting families of partitions 
\cite{partitions}, intersecting families of spanning trees 
\cite{trees,trees_2}, and the Hajnal--Rothschild problem
\cite{hajnal_rothschild}.

Independently of Kupavskii and Zakharov's proof
\cite{spread_approx},
Keller, Lifshitz, Minzer, and Sheinfeld \cite{On}
proved that \Cref{conj:main} holds for all $t\leq cn$ 
for some constant $c>0$ by using Fourier-analytic techniques,
more specifically hypercontractivity of global functions.
Their proof share some high-level ideals to \cite{spread_approx},
namely the use of some notion of pseudorandomness.
However, the technical tools used are completely different.

Finally, Kupavskii \cite{one_minus_eps}
refined the spread approximation technique to show that 
\Cref{conj:main} holds for all $t\leq (1-\eps)n$
and $n>n_0(\eps)$.
This the first result in which the maximal family 
is not necessarily $\calA_0$ or $\calA_{\lfloor (n-t)/2\rfloor}$,
but it can be $\calA_k$ for any $k$.
The numerous candidates for maximal families substantially 
complicate the analysis.

As we shall explain in the next subsection
(and with more detail in \Cref{rem:loglog}),
Kupavskii's proof requires $t$ to be at least 
$n-O\left(\frac{n\log \log n}{\log n}\right)$.
We overcome several obstacles in Kupavskii's proof 
and show that \Cref{conj:main} holds 
for all $t\geq n-n^{5/7+\eps}$.

\subsection{Proof Outline}
We now describe the high-level overview of 
the spread approximation method and explain 
our new contributions.
Broadly speaking, the two new ideas that allow us to get 
past the previous result are (i) an iterative version of 
spread approximation and (ii) a new analysis using 
$r$ sets instead of just two as in \cite{one_minus_eps}.
We now explain these two ideas in more detail.

In its most basic form (as in \cite{spread_approx}), the method of spread 
approximation works by approximating a family 
of permutations $\calF\subseteq \Sigma_n$ 
by a family of partial permutations $\calS$.
Informally, a partial permutation is a permutation 
where some entries are allowed to be undefined.
See \Cref{subsec:notation} for a precise definition.
Roughly speaking, family $\calS$ satisfies three properties:
\begin{itemize}
\item each element of $\calS$ has size
(i.e., number of defined entries) 
not much larger than $t$;
\item at least $(1-o(1))$-fraction of elements in $\calF$ 
extend an element in $\calS$; and
\item $\calS$ is $t$-intersecting.
\end{itemize}
The step of obtaining $\calS$ is called 
the \vocab{spread approximation step},
which relies on the ``spread lemma''
(\Cref{lem:spread}) introduced by
Alweiss, Lovett, Wu, and Zhang \cite{sunflower}
in the study of sunflower-free families, and later sharpened by 
Tao and Hu \cite{spread_lemma,spread_lemma_2}.

Next, we analyze $\calS$ further 
by the process of peeling (described precisely in 
\Cref{subsec:peeling}).
Informally, we alternate between 
\begin{itemize}
\item taking out partial permutations of maximum size; and
\item simplifying the family (by removing elements 
from partial permutations in the family) so that 
it remains $t$-intersecting.
\end{itemize}
The process partitions $\calS$ into layers, where the $k$-th 
layer consists of partial permutations with $t+k$ elements. 
Each layer has some nice properties (\Cref{prop:peeling_properties})
that allow us to count and upper-bound the number of elements.
Ultimately, one can show that most elements in $\calF$ 
contain the same fixed $t$-element partial permutation
(i.e., $\calF$ is very close to $\calA_0$).
This yields the so-called stability result.
Finally, a simple counting argument shows that 
$\calF$ must actually be contained in $\calA_0$.

When our family $\calF\subseteq \Sigma_n$ is too sparse
(which occurs when $t$ is closer to $n$),
the spread approximation step no longer works.
To overcome this, Kupavskii \cite{one_minus_eps}
introduced an extra step, in which one 
proves a density boost result (i.e., there exists a partial permutation 
that contains higher-than-average number of elements in $\calF$),
and used that to construct a better spread approximation.
Kupavskii's density boost result relied on the ability 
to get the initial spread approximation and perform the
peeling step, which,
as we shall explain in \Cref{rem:loglog},
inherently requires $t\geq n-\frac{C\log\log n}{\log n}$.

To overcome this issue, we introduce our idea (i) 
of doing the spread approximation iteratively.
In particular, we do not start by 
trying to get a spread approximation.
Instead, we begin by proving a density boost result.
This density boost result is very weak,
but it is enough to initiate a spread approximation,
which in turn gives a better density boost result.
Consequently, we alternate between finding a spread approximation 
and proving a density boost result.
Each round of alternation results in an incrementally better 
spread approximation and a better density boost,
and eventually, the density boost result is good enough 
to construct our final spread approximation.
Our new iterative spread approximation step
results in \Cref{thm:accurate_spread_approx},
which works whenever $t\leq n-n^{1/2+\eps}$.

In the subsequent peeling step, Kupavskii \cite{one_minus_eps}
had to work harder in order to get a maximum family 
that is not $\calA_0$.
After constructing the peeling, he chose $k$ such that 
the $k$-th layer is large. 
In this layer, one can show that there exist 
partial permutations $A$ and $B$ such that $|A\cap B| = t$
(in particular, is not strictly greater than $t$). 
Thus, $|A\cup B| = t+2k$, giving our $(t+2k)$-element 
partial permutation. 
Then by a counting argument, one can show that most 
partial permutations in the $k$-th layer 
are contained in $A\cup B$, recovering the stability result.
This argument works whenever $t\leq n-n^{3/4+\eps}$
(see \Cref{rem:34} for an explanation of why it 
only works up to the exponent $\frac 34$).
Thus, combining Kupavskii's argument 
with our new spread approximation 
step would give a proof of \Cref{conj:main} 
for $t\leq n-n^{3/4+\eps}$.

Improving the exponent past $\frac 34$
requires a more elaborate counting argument,
which motivates the idea (ii) of 
using $r$ partial permutations instead of only two 
partial permutations $A$ and $B$.
More specifically, we show that there exist partial permutations 
$A_1,\dots,A_r$ with certain nice properties
(\Cref{lem:good_A}).
Then by counting partial permutations that have 
intersection of size $t$ with each of $A_1$,
\dots, $A_r$, 
we show that most partial permutations are contained in 
$A_1\cup \dots\cup A_r$ (\Cref{cor:bound_T_k}). 
This recovers the stability result.
We believe that our argument could be further optimized 
to reduce the exponent (see \Cref{rem:optimizing}
for further discussion).
Also, see \Cref{rem:why_57} for a discussion 
of why the exponent $\frac 57$ was chosen.

\subsection{Organization of this paper}
This entire paper is devoted to proving 
\Cref{thm:not_too_close}.
\Cref{sec:preliminaries} contains some background,
including notation in \Cref{subsec:notation},
elementary bounds in \Cref{subsec:binom_bound}
and \Cref{subsec:bound_A_k}, and results 
about spread sets in \Cref{subsec:spread}.
\Cref{sec:spread_approx} executes the spread approximation 
step, developing the iterative spread approximation method.
The peeling step begins in \Cref{sec:peeling},
where we recall the peeling construction from previous papers
and prove the rough bound on layer size.
The crux of the peeling step lies in \Cref{sec:bounding},
where we prove the existence of nice $A_1,\dots,A_r$
and use them to bound the size of each layer.
Finally, \Cref{sec:conclusion} puts everything together
to prove \Cref{thm:not_too_close}.

\subsection{Acknowledgements}
This research was initiated during 
the University of Minnesota Duluth REU 
with support from Jane Street Capital, 
NSF Grant 2409861, 
and donations from Ray Sidney and Eric Wepsic. 
We thank Colin Defant and Joe Gallian
for such a wonderful opportunity. 
We also thank Noah Kravitz for helpful discussions 
and Maya Sankar for detailed feedback on this paper.
\section{Preliminaries}
\label{sec:preliminaries}
\subsection{Notation and Convention}
\label{subsec:notation}
We ignore all floors and ceilings when they are not 
important to the proof.

We use the standard asymptotic notations 
(most prominently $O$, $o$, $\sim$, $\ll$, and $\lesssim$) 
throughout the paper. We note the following:
\begin{itemize}
\item $f(n) \ll g(n)$, $g(n)\gg f(n)$, and $f(n)=o(g(n))$
are equivalent.
\item $f(n)\lesssim g(n)$, $g(n)\gtrsim f(n)$, and $f(n)=O(g(n))$
are equivalent.
\end{itemize}

We let $\mathvocab{[n]} := \{1,2,\dots,n\}$ be the standard $n$-element set.
For any set $X$, we let 
\begin{itemize}
\item $\mathvocab{2^X}$ be the set of all subsets of $X$,
\item $\mathvocab{\binom{X}k}$ 
be the set of all subsets of $X$ of size $k$, and
\item $\mathvocab{\binom{X}{\leq k}}$
be the set of all subsets of $X$ of size at most $k$.
\end{itemize}
Calligraphic letters ($\calA$, $\calB$, \dots)
will generally be used to denote families of sets.
As introduced in \cite{spread_approx}, we use the following notation in dealing with 
families of sets:
for a set $X$ and families $\calF$, $\calS$,
we let 
\begin{align*}
\mathvocab{\calF[X]} &:= \{F\in\calF : X\subseteq F\} \\
\mathvocab{\calF(X)} &:= \{F\setminus X : F\in\calF, X\subseteq F\}
= \{F\setminus X : F\in \calF[X]\} \\
\mathvocab{\calF[\calS]} &:= \bigcup_{A\in\calS} \calF[A].
\end{align*}
Note that $|\calF[X]| = |\calF(X)|$.

Rather than viewing $\Sigma_n$ as a collection of permutations,
we view $\mathvocab{\Sigma_n}$ as a collection of sets by having 
$\sigma : [n]\to [n]$ correspond to the
set $\{(1,\sigma(1)), (2,\sigma(2)), \dots, (n,\sigma(n))\}$.
Thus, $\Sigma_n \subseteq \binom{[n]^2}{n}$
(here, $[n]^2 = [n]\times [n]$)
and a $t$-intersecting family of permutations of $[n]$
corresponds to a $t$-intersecting family $\calF\subseteq\Sigma_n$,
where any two sets in $\calF$ have intersection size 
at least $t$.
A \vocab{partial permutation} 
is a subset of some element of $\Sigma_n$.
We note that for any $X\subseteq [n]^2$, we have 
$\Sigma_n(X) \leq (n-|X|)!$.

Throughout this paper, we make the following 
blanket assumption on $n$ and $t$.
\begin{equation}
\label{eq:n_t_cond}
\boxed{\text{Let }n\text{ and }t\text{ be
sufficiently large positive integers 
such that } n^{0.99} < t < n.}
\end{equation}
If $t<n^{0.99}$, then \Cref{thm:not_too_close} holds
due to a previous result such as \cite{spread_approx}.
We include this assumption to simplify some 
technicalities involved in the proof.

For the entire paper, we let $\mathvocab u := n-t$.

\subsection{Elementary Estimates}
\label{subsec:binom_bound}
In this section, we collect some elementary estimates 
that will be used later.
\begin{lemma}
\label{lem:binom_bound}
For any integers $n\geq k\geq 0$,
we have $\binom nk \leq (en/k)^k$.
\end{lemma}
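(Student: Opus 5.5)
The plan is to prove the bound $\binom nk \le n^k/k!$ and then lower-bound $k!$ by $(k/e)^k$. The case $k=0$ is trivial under the usual convention $(en/0)^0 = 1$, and both sides equal $1$. So we assume $1\le k\le n$.

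First, write
\[
\binom nk = \frac{n(n-1)\cdots(n-k+1)}{k!}.
\]
Each of the $k$ factors in the numerator is at most $n$, so $\binom nk \le n^k/k!$.

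Second, we show $k! \ge (k/e)^k$. Take the Taylor series of the exponential at $x=k$:
\[
e^k = \sum_{j\ge 0} \frac{k^j}{j!}.
\]
Every term is nonnegative, so the single term $j=k$ gives $e^k \ge k^k/k!$. Rearranging yields $k!\ge k^k/e^k$.

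Combining the two steps,
\[
\binom nk \le \frac{n^k}{k!} \le n^k\cdot\frac{e^k}{k^k} = \left(\frac{en}{k}\right)^k.
\]

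No step here is a real obstacle. The only point needing care is the degenerate case $k=0$, which is handled by the convention noted at the start.
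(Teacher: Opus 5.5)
Your proof is correct and follows the same two-step approach as the paper: bound $\binom nk \le n^k/k!$, then use $k!\ge (k/e)^k$. The only differences are that you justify the factorial bound with the single term $k^k/k!$ of the series for $e^k$ rather than the paper's integral comparison $\log k! \ge \int_1^k \log x\,dx$, and that you treat the case $k=0$ explicitly, which the paper leaves implicit.
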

\begin{proof}
Note that $\binom nk \leq \frac{n^k}{k!}$
and $k!\geq \exp\left(\int_1^k \log x\, dx\right) 
= \left(\tfrac ke\right)^k$ 
Combining these two estimates gives the lemma.
\end{proof}
\begin{lemma}
\label{lem:factorial_bound}
For any positive integers $n\geq k$, we have 
$\frac{n!}{k!} \geq \left(\frac ne\right)^{n-k}$.
\end{lemma}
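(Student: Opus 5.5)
The plan is to write $\frac{n!}{k!}$ as the product of the $n-k$ factors $k+1, k+2, \dots, n$ and bound this product from below. Since each factor lies in $[k+1,n]$, the trivial bound $\frac{n!}{k!}\geq (k+1)^{n-k}$ is too weak when $k$ is small. So, as in \Cref{lem:binom_bound}, I would compare the sum of logarithms with an integral:
$$\log\frac{n!}{k!} = \sum_{j=k+1}^{n}\log j \;\geq\; \int_k^n \log x\,dx,$$
which holds because $\log$ is increasing, so $\log j \geq \int_{j-1}^{j}\log x\,dx$.

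Next I evaluate the integral:
$$\int_k^n \log x\,dx = n\log n - n - k\log k + k.$$
The target is $(n-k)\log n - (n-k)$. Subtracting, it suffices to show
$$n\log n - k\log k \;\geq\; (n-k)\log n, \qquad\text{that is,}\qquad k\log n \geq k\log k.$$
This holds because $n\geq k$. Exponentiating then gives $\frac{n!}{k!}\geq \left(\frac ne\right)^{n-k}$.

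There is no real obstacle. The only step that needs care is the lower integral comparison: I use $\log j\ge\int_{j-1}^j\log x\,dx$, not the upper Riemann sum. The case $k=n$ is trivial, since both sides equal $1$.
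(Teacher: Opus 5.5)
Your proof is correct, but it takes a different route from the paper's. The paper first proves the intermediate inequality $(k+1)(k+2)\cdots n \geq (n!)^{(n-k)/n}$. This says the top $n-k$ factors of $n!$ have geometric mean at least that of all $n$ factors. It proves this by raising both sides to the $n$-th power and comparing $\prod_{i}(\lfloor i/n\rfloor + k+1)$ with $\prod_{i}(\lfloor i/(n-k)\rfloor + 1)$ factor by factor, and then quotes the standard bound $n!\geq (n/e)^n$.

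You instead apply the integral comparison directly to $\sum_{j=k+1}^{n}\log j$ on $[k,n]$. This avoids the floor-function bookkeeping. It also gives the stronger estimate $\frac{n!}{k!}\geq \frac{(n/e)^n}{(k/e)^k}=\left(\frac ne\right)^{n-k}\left(\frac nk\right)^k$, whose only loss is the final discarding of $k\log(n/k)\geq 0$.

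The paper's version isolates a clean monotonicity fact and reduces the lemma to a black-box bound on $n!$. Yours is shorter, self-contained, and reuses the integral idea already in the proof of \Cref{lem:binom_bound}.
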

\begin{proof}
We first claim that $(k+1)(k+2)\cdots n  
\geq n!^{\frac{n-k}n}$. To prove this, note that
\begin{align*}
\big((k+1)(k+2)\cdots n \big)^n 
&= \prod_{i=0}^{n(n-k)-1} 
\left(\left\lfloor\frac in\right\rfloor + k + 1\right) \\
&\geq \prod_{i=0}^{n(n-k)-1} 
\left(\left\lfloor\frac i{n-k}\right\rfloor + 1\right) 
= (1\cdot 2\cdots n)^{n-k},
\end{align*}
and the middle inequality 
$\big\lfloor \frac in\big\rfloor + k
\geq \big\lfloor \frac i{n-k}\big\rfloor$
follows from taking floor on both sides 
of the inequality $\frac in + k \geq \frac i{n-k}$,
which holds for all $i\leq n(n-k)$.

Finally, we have that
$$\frac{n!}{k!}\ =\ (k+1)(k+2)\cdots n 
\ \geq\ n!^{\frac{n-k}n} 
\ \geq\ \left(\frac ne\right)^{n-k},$$
where the last inequality comes from the well-known estimate
$n!\geq \left(\frac ne\right)^n$.
\end{proof}
\begin{lemma}
\label{lem:binom_bound_2}
If $a$, $b$ and $x$ are positive integers such that 
$a>b$ and $x\geq 2b$, then 
$$\left(\frac xb -1 \right)^{b-a} \binom xa 
\ <\ \binom xb\ <\ \left(\frac xa\right)^{b-a} \binom xa$$
\end{lemma}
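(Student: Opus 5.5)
The plan is to rewrite both inequalities in terms of the ratio $\binom xa/\binom xb$, which telescopes, and then bound each factor of the product separately. Since $a>b$, the exponent $b-a$ is negative. So the left inequality is equivalent to
$$\frac{\binom xa}{\binom xb}\ <\ \left(\frac{x-b}{b}\right)^{a-b},$$
and the right inequality is equivalent to
$$\frac{\binom xa}{\binom xb}\ >\ \left(\frac ax\right)^{a-b}.$$
Writing $\binom xj/\binom x{j-1} = \frac{x-j+1}{j}$ gives
$$\frac{\binom xa}{\binom xb} \;=\; \prod_{j=b+1}^{a}\frac{x-j+1}{j},$$
a product of exactly $a-b\ge 1$ factors.

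\textbf{Left inequality.} For each $j\ge b+1$ we have
$$\frac{x-j+1}{j}\ \le\ \frac{x-b}{b+1}\ <\ \frac{x-b}{b}.$$
Taking the product over the $a-b\geq 1$ factors gives the strict inequality $\binom xa/\binom xb<(x/b-1)^{a-b}$, which is the left-hand claim. The hypothesis $x\ge 2b$ only guarantees that $x/b-1\ge 1$ is positive, so that raising to a negative power is harmless. This direction is routine.

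\textbf{Right inequality.} Here I need each factor to satisfy $\frac{x-j+1}{j}>\frac ax$. For $j\le a$ the factor is at least $\frac{x-a+1}{a}$. The key condition is therefore $x(x-a+1)>a^2$. This holds for instance whenever $a\le x/2$, since then each factor exceeds $1\ge a/x$.

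This step is the main obstacle, and I expect the stated hypotheses not to suffice for it. Take $a=x$: the ratio becomes $1/\binom xb$, while $(a/x)^{a-b}=1$, so the right inequality fails. Likewise, if $a>x$ then $\binom xa=0$ and the right-hand side vanishes. So I would first check which regime is used later in the paper and add a hypothesis such as $2a\le x$ to the statement. Under that hypothesis the factor-wise bound above finishes the proof. Alternatively, if the intended hypothesis is really $a<b$, the same telescoping argument applies with the roles of $a$ and $b$ swapped.
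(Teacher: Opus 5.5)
Your conclusion that the right-hand inequality fails as literally stated is correct, and your example $a=x$ is a valid counterexample. Your proposed repair, however, rests on an algebra slip. Because $(x/a)^{b-a}=(a/x)^{a-b}$, the inequality $\binom xb<(x/a)^{b-a}\binom xa$ is equivalent to $\binom xa/\binom xb>(x/a)^{a-b}$, not to $\binom xa/\binom xb>(a/x)^{a-b}$. A factor-by-factor proof would therefore need $\frac{x-j+1}{j}>\frac xa$ for all $b<j\le a$, and this fails at $j=a$ whenever $a\ge 2$.

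The extra hypothesis $2a\le x$ does not rescue it. For $a=b+1$ one has $\frac ax\binom xa=\frac{x-b}{x}\binom xb<\binom xb$ for every admissible $x$. For instance, $(x,a,b)=(4,2,1)$ satisfies $x\ge 2a$ but gives $\frac 24\binom 42=3<4=\binom 41$. So your claim that under $2a\le x$ ``the factor-wise bound finishes the proof'' is false. With $a>b$, no condition on $x$ alone can save the right-hand inequality.

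The correct fix is the one you mention only in your last sentence: the hypothesis should read $a<b$. That is the case the paper's proof treats. It writes $\binom xb/\binom xa=\prod_{j=a+1}^{b}\frac{x-j+1}{j}$, a product of $b-a$ factors. These factors decrease in $j$, so they all lie in $\bigl[\frac{x-b+1}{b},\frac{x-a}{a+1}\bigr]$. Hence each lies strictly between $\frac xb-1$ (positive because $x\ge 2b$) and $\frac xa$, and both inequalities follow at once.

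This is also the reading the later applications need. The step $\binom{t+k}{k-m}\le(k/t)^m\binom{t+k}{k}$ in the proof of \Cref{lem:good_A} is the left inequality with $(a,b,x)=(k-m,k,t+k)$, where $\frac xb-1=\frac tk$. The bound on $\binom{t}{j-(r-2)(k-j)}$ in \Cref{lem:bound_G_j} is the same inequality with $b=j$. Your argument for the left inequality with $a>b$ is correct. But it bounds the larger-index coefficient above in terms of the smaller-index one, which is not the estimate the paper uses.
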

\begin{proof}
Note that 
\begin{align*}
\frac{\binom xb}{\binom xa}
&= \left(\frac{x-a}{a+1}\right) 
\cdots \left(\frac{x-(b-2)}{b-1}\right) 
\left(\frac{x-(b-1)}b\right),
\end{align*}
which is clearly between 
$\left(\frac xb-1\right)^{b-a}$ and $\left(\frac xa\right)^{b-a}$.
\end{proof}
\begin{lemma}
\label{lem:close_binomial}
If $b = o(\sqrt a)$, then 
$$\binom{a+b}b = (1+o(1)) \binom ab.$$
In particular, repeatedly applying this lemma gives 
$\binom{a+kb}b = (1+o(1))\binom ab$ for all 
nonnegative integer $k$ (note that we treat $k$
is a constant here).
\end{lemma}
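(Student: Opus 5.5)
The approach is to write the ratio as a product and bound its logarithm. We have
$$\frac{\binom{a+b}{b}}{\binom{a}{b}} = \frac{(a+b)!\,(a-b)!}{a!\,a!} = \prod_{i=1}^{b}\frac{a+i}{a-b+i}.$$
Each factor is at least $1$, so the ratio is at least $1$. This gives the lower bound $\binom{a+b}b\geq \binom ab$ immediately.

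For the upper bound, each factor equals $1+\frac{b}{a-b+i}\leq 1+\frac{b}{a-b}$. Hence the ratio is at most $\left(1+\frac{b}{a-b}\right)^b\leq \exp\!\left(\frac{b^2}{a-b}\right)$. Since $b=o(\sqrt a)$, we have $b=o(a)$, so $a-b=(1-o(1))a$. Therefore $\frac{b^2}{a-b}=(1+o(1))\frac{b^2}{a}=o(1)$, and the ratio is $\exp(o(1))=1+o(1)$. The only care needed is that $b<a$ eventually, so that $\binom ab>0$ and the factors are defined; this holds because $b=o(\sqrt a)$ forces $a\to\infty$ in the relevant regime.

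For the ``in particular'' statement with $k$ fixed, I would apply the lemma $k$ times, with $a$ replaced successively by $a,\,a+b,\,\dots,\,a+(k-1)b$. At step $j$ the hypothesis requires $b=o(\sqrt{a+jb})$. This holds because $a+jb\geq a$ and $\sqrt{a+jb}\geq\sqrt a$. Multiplying the $k$ factors, each of the form $1+o(1)$, gives $(1+o(1))^k=1+o(1)$ because $k$ is constant.

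Alternatively, the same product bound applied directly gives the ratio $\binom{a+kb}{b}/\binom ab\leq \exp(kb^2/(a-b))$. Either route works. There is no real obstacle; the only subtle point is checking that the $o(\cdot)$ hypothesis survives the shift of $a$, which is immediate by monotonicity.
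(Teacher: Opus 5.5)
Your proof is correct and follows the paper's argument. Like the paper, you write the ratio as $\prod_{i=1}^{b}\frac{a+i}{a-b+i}$, bound it by $\bigl(1+\frac{b}{a-b}\bigr)^b\le\exp\bigl(\frac{b^2}{a-b}\bigr)=1+o(1)$ (the paper uses the slightly sharper denominator $a-b+1$), and you additionally spell out the lower bound and the $k$-fold iteration, which the paper leaves implicit.
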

\begin{proof}
Note that 
\begin{align*}
\frac{\binom{a+b}b}{\binom ab} 
&= \frac{(a+b)(a+b-1)\cdots (a+1)}{a(a-1)(a-2)\dots (a-b+1)} \\
&\leq \left(1+\frac{b}{a-b+1}\right)^b 
< \exp\left(\frac{b^2}{a-b+1}\right) = 1+o(1). \qedhere
\end{align*}
\end{proof}
\subsection{Bounds on \texorpdfstring{$|\calA_k|$}{|A\_k|}}
\label{subsec:bound_A_k}
In our language of viewing permutations as elements in 
$\Sigma_n$, the optimal family $\calA_k$ 
for each $0\leq k\leq \frac{n-t}2$ can be written as 
$$\calA_k = \left\{F\in\Sigma_n : 
\begin{array}{c}
F\text{ contains a }(t+k)
\text{-element subset of } \\
\{(1,1),(2,2),\dots,(t+2k,t+2k)\}
\end{array}\right\}.$$
Throughout this paper, we write 
$$\max_j |\calA_j| := \max_{j=0}^
{\left\lfloor\frac{n-t}2\right\rfloor} |\calA_j|.$$

In this subsection, we provide elementary bounds on 
the size of $\calA_k$ that we will use in our proof.
The following \Cref{lem:bound_A_k}
appeared in \cite{one_minus_eps};
we provide the proof for completeness.
\begin{lemma}[{\cite[Lem.~5]{one_minus_eps}}]
\label{lem:bound_A_k} 
For any positive integers $n,t,k$, 
we have $|\calA_k| \leq \binom{t+2k}{k}(n-t-k)!$.
Furthermore, if $k=o(n-t-k)$, then 
$$|\calA_k| = (1+o(1)) \binom{t+2k}k (n-t-k)!$$
\end{lemma}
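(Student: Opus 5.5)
The upper bound comes from a union bound, and the asymptotic statement from a matching lower bound on the permutations with exactly $t+k$ fixed points in $[t+2k]$.

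\textbf{Upper bound.} Let $D=\{(1,1),\dots,(t+2k,t+2k)\}$. Every $F\in\calA_k$ contains some $(t+k)$-subset $X\subseteq D$, so
$$\calA_k=\bigcup_{X\in\binom{D}{t+k}}\Sigma_n[X].$$
Each $X$ is a partial permutation of size $t+k$, so $|\Sigma_n[X]|=(n-t-k)!$. The union bound then gives
$$|\calA_k|\le\binom{t+2k}{t+k}(n-t-k)!=\binom{t+2k}{k}(n-t-k)!.$$

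\textbf{Lower bound.} Count the permutations whose fixed points within $[t+2k]$ form exactly a prescribed $(t+k)$-set $X$. These sets are disjoint for different $X$, and all of them lie in $\calA_k$.

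Fix $X$ and let $Y=[t+2k]\setminus X$, so $|Y|=k$. We need permutations of the remaining $m:=n-t-k$ points that fix no point of $Y$. By inclusion--exclusion their number is
$$\sum_{j=0}^{k}(-1)^j\binom kj (m-j)! \;\ge\; m!-k(m-1)! \;=\; m!\Bigl(1-\frac km\Bigr).$$
The inequality holds because the alternating terms $\binom kj (m-j)!$ decrease in $j$ once $k<m$. Alternatively, one can use the Bonferroni bound directly: the count is at least $m!$ minus the number of permutations fixing at least one point of $Y$, and the latter is at most $k(m-1)!$.

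Since $k=o(n-t-k)=o(m)$, this is $(1-o(1))\,m!$. Summing over the $\binom{t+2k}{k}$ disjoint choices of $X$ gives
$$|\calA_k|\ge(1-o(1))\binom{t+2k}{k}(n-t-k)!,$$
which combined with the upper bound proves the asymptotic.

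The only step needing any care is this derangement-type lower bound, and the simple union (Bonferroni) estimate suffices.
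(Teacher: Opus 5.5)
Your proof is correct. The upper bound is the same union bound as the paper's. The lower bound reaches the identical estimate by a different decomposition.

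You partition the permutations with exactly $t+k$ fixed points in $[t+2k]$ according to that fixed set $X$. Inside each block you apply the first Bonferroni inequality, which gives $\binom{t+2k}{k}\bigl((n-t-k)!-k(n-t-k-1)!\bigr)$.

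The paper works globally instead. It subtracts $(t+k+1)\binom{t+2k}{t+k+1}(n-t-k-1)!$ from the union-bound count. It then checks that a permutation with $j\ge t+k+1$ fixed points in $T$ receives net weight $\binom{j}{t+k}-(t+k+1)\binom{j}{t+k+1}\le 0$, while one with exactly $t+k$ receives weight $1$.

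Since $(t+k+1)\binom{t+2k}{t+k+1}=k\binom{t+2k}{k}$, the two lower bounds are literally the same number, with relative error $k/(n-t-k)$. In both arguments the quantity actually being bounded is the number of permutations with exactly $t+k$ fixed points in $T$.

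Your version makes disjointness automatic. It also replaces the multiplicity check with the familiar fact that at most $k(m-1)!$ permutations fix some point of $Y$. The paper's version is a single inequality at the level of the whole family, with no exact-fixed-set decomposition or derangement count.
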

\begin{proof}
Let $T=\{(1,1),(2,2),\dots,(t+2k,t+2k)\}$.
To prove the upper bound, we note that 
each element of $\calA_k$ is counted at least once by 
picking a $(t+k)$-element subset of $T$ (there are 
$\binom{t+2k}{t+k}=\binom{t+2k}k$ such subsets)
and permuting the remaining $n-t-k$ indices.
Thus, we have $|\calA_k| \leq \binom{t+2k}k(n-t-k)!$.

To prove the lower bound, we have to account 
for the fact that an element might intersect $T$ 
in more than $t+k$ elements.
We claim that 
$$|\calA_k|\geq  \binom{t+2k}{t+k} (n-t-k)! 
- (t+k+1)\cdot  \binom{t+2k}{t+k+1} (n-t-k-1)!.$$
To see why, first note that $\binom{t+2k}{t+k+1}(n-t-k-1)!$
counts the number of ways to choose $(t+k+1)$-element 
subset of $T$ and permute the remaining $n-t-k-1$ indices.
In particular, elements that intersect $T$ in $j$ 
elements are counted $\binom j{t+k}$ times in the first term
and $(t+k+1)\binom j{t+k+1}$ times in the second term.
Since $\binom j{t+k} = \frac{t+k+1}{j-t-k} \binom j{t+k+1} 
\leq (t+k+1)\binom j{k+1}$ for all $j\geq t+k+1$, 
we get that each element is counted at most once
in the right hand side. 
Thus, we have that 
\begin{align*}|\calA_k|
&\geq  \binom{t+2k}{t+k} (n-t-k)! 
- (t+k+1)\cdot  \binom{t+2k}{t+k+1} (n-t-k-1)! \\
&\geq \binom{t+2k}{k}(n-t-k)! 
\left(1 - (t+k+1)\cdot \frac{k}{t+k+1}\cdot \frac 1{n-t-k}\right) \\
&= (1-o(1)) \binom{t+2k}k (n-t-k)!
\end{align*}
since $k=o(n-t-k)$.
\end{proof}
We also need the following result, which says that 
the expression $\binom tj (n-t-j)!$ appearing in the 
above lemma decays quickly as it deviates from the 
maximum value, and so the sum of them across all $j$
is not too big.
\begin{lemma}
    \label{lem:bound_sum_A}
Let $n$ and $t$ be positive integers,
and recall that $u=n-t$.
Assume that $u^2 \gg t \gg u$
and $u\geq 10$.
\begin{enumerate}[label=(\alph*)]
\item If $j_0$ is the index $j\in \left[0,\frac{u}2\right]$
such that 
$\binom tj (n-t-j)!$ is maximum, then $j_0=(1+o(1))\frac tu$.
\item There exists a constant $C>0$ such that 
$$\sum_{j=0}^{0.1u} \binom tj (n-t-j)! 
\leq C\sqrt{\frac tu} \cdot \max_{0\leq j\leq u} 
\binom tj (n-t-j)!.$$
\end{enumerate}
\end{lemma}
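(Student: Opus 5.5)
The plan is to study the ratio of consecutive terms. Set $a_j := \binom tj (u-j)!$, where $u=n-t$. Then
$$\frac{a_{j+1}}{a_j} \;=\; \frac{t-j}{(j+1)(u-j)} .$$
The numerator $t-j$ and the factor $u-j$ both decrease in $j$, but on $[0,u/2]$ the denominator $(j+1)(u-j)$ is increasing. Hence the ratio $r_j := a_{j+1}/a_j$ is strictly decreasing in $j$ on that range, so $(a_j)$ is unimodal there. The maximizer $j_0$ is therefore the first $j$ with $r_j<1$.

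For part (a), we solve $t-j=(j+1)(u-j)$ approximately. Set $j=\lambda t/u$. Since $t\ll u^2$, we have $j=o(u)$ whenever $\lambda=O(1)$. Then $u-j=(1+o(1))u$ and $t-j=(1+o(1))t$, because $j\le u\ll t$. We also have $j+1=(1+o(1))j$, since $t/u\gg 1$. So $r_j=(1+o(1))\,t/(ju)=(1+o(1))/\lambda$. Fix any $\delta>0$. At $j=(1-\delta)t/u$ the ratio exceeds $1$, and at $j=(1+\delta)t/u$ it is below $1$, for $n$ large. Monotonicity of $r_j$ then gives $j_0\in[(1-\delta)t/u,(1+\delta)t/u]$, which proves (a). We must also check that $(1+\delta)t/u\le u/2$, which holds because $t\ll u^2$.

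For part (b), the same ratio shows that the terms decay geometrically at a rate of order $1/\sqrt{t/u}$ away from the peak. Write $m=t/u$ and $j=j_0+s$. For $j\le 0.1u$ we have $u-j\ge 0.9u$, so the only $j$-dependence that matters comes from $j+1$. The quantity $\log r_j$ behaves like $-\log\!\big((j+1)/m\big)+O(j/u)+O(j/t)$. Summing from $j_0$ to $j_0+s$ gives $\log(a_{j_0+s}/a_{j_0})\lesssim -c\,s^2/m$ for $s\le m$, and a drop of at least $-c s$ beyond that. Both estimates rely on the fact that $(j+1)(u-j)\ge (j+1)\cdot 0.9u$ grows linearly in $j$. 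Symmetrically, for $s<0$ with $|s|\le j_0$ we get $\log(a_{j_0+s}/a_{j_0})\le -c\,s^2/m$.

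Summing these Gaussian-type bounds gives $\sum_s e^{-cs^2/m}=O(\sqrt m)$ plus a geometric tail of size $O(1)$. This yields the constant $C\sqrt{t/u}$, using that $\max_{0\le j\le u} a_j\ge a_{j_0}$.

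The main obstacle is making the second-order estimate uniform. We need $\log r_{j_0+s}\le -c' s/m$ for $0\le s\le m$. This follows from $r_{j_0}\le 1$ together with $r_{j_0+s}/r_{j_0}\le \frac{j_0+1}{j_0+s+1}\cdot\frac{u-j_0}{u-j_0-s}$. Here the second factor is at most $1+O(s/u)$, which is dominated by the first factor's decrease $1-\Theta(s/m)$, precisely because $m=t/u\ll u$. The analogous lower-side bound uses $r_{j_0-1}\ge1$. The regime $s>m$ is handled by crude geometric decay, since there $r_j\le 1/2$ for $j\ge 2.1m$. Finally, the boundary cases $j_0\approx 0$ cannot occur, since $m\gg1$.
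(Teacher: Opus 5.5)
Your proposal is correct and follows essentially the same route as the paper. In both, monotonicity of the consecutive-term ratio places the peak at $(1+o(1))t/u$, and comparing the ratio near $j_0$ with the ratio at $j_0$ shows it moves away from $1$ linearly at rate $\asymp u/t$, so the terms decay on the scale $\sqrt{t/u}$. The only difference is packaging: you sum a Gaussian bound $e^{-cs^2/m}$, while the paper uses an explicit identity for the difference of two ratios and sums geometric bounds over blocks of length $\sqrt{t/u}$.
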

\begin{proof}
Let $u=n-t$ and let $f(j) = \binom tj (n-t-j)!$.
Thus, 
$$\frac{f(j)}{f(j+1)} = \frac{(j+1)(n-t-j)}{t-j}
= \frac{(j+1)(u-j)} {t-j}.$$
In particular, $f(j)/f(j+1)$ is increasing 
when $j\in [0,u/2]$. Moreover, 
\begin{itemize}
\item $f(j)/f(j+1) \leq (j+1) \frac ut$ for all $j\leq\frac u2$
(this uses $t\gg u$);
\item $f(j)/f(j+1) \geq \frac 12(j+1) \frac ut$ for all $j
\leq \frac u2$; and
\item $f(j)/f(j+1) = (1+o(1)) \frac ut(j+1)$ for all 
$j\ll u$.
\end{itemize}
Note that
$f(j_0)/f(j_0+1) \geq 1$ and $f(j_0-1)/f(j_0) \leq 1$.
From the second bullet point, we deduce that 
$j_0\leq \frac{2t}u$.
Since $u \gg \frac tu$, $j_0$ lies 
in the regime where the third bullet point applies,
and so we deduce that $j_0 = (1+o(1))\frac tu - 1
= (1+o(1)) \frac tu$,
proving (a).
(This uses $t\gg u$.)

Next, we note that for any $a,b<0.1u$, we have 
\begin{align*}
\left|\frac{f(a)}{f(a+1)} - \frac{f(b)}{f(b+1)}\right|
&= \left|\frac{(a+1)(u-a)}{t-a} 
- \frac{(b+1)(u-b)}{(t-b)}\right| \\
&= \left|\frac{(a-b)(tu+ab-at-bt-t+u)}{(t-a)(t-b)}\right| \\
&\geq |a-b| \cdot \frac{tu-0.1tu-0.1tu-0.1tu}{t^2}
\tag{using $a,b<0.1u$ and $u\geq 10$} \\
&> |a-b| \cdot \frac{u}{2t}.
\end{align*}
Therefore, we have that for all $j\leq 0.1u$
and $j-j_0\geq \sqrt{t/u}$, we have
\begin{align*}\frac{f(j)}{f(j+1)} 
\ \geq\ \frac{f(j_0)}{f(j_0+1)} 
+ (j-j_0) \cdot \frac{u}{2t} 
\ \geq\  1 + \sqrt{\frac{u}{4t}}
\end{align*}
In particular, if $j-j_0 \geq k\sqrt{t/u}$, then 
repeatedly applying this bound gives
\begin{align*}
\frac{f(j)}{f(j_0)}
\ =\ \prod_{i=j_0}^{j-1} \frac{f(i+1)}{f(i)} 
\ \leq\ \prod_{i=j_0+\sqrt{t/u}}^{j-1} \frac{f(i+1)}{f(i)} 
\ \leq\ \left(1+\sqrt{\frac{u}{4t}}\right)^{-(k-1)\sqrt{t/u}} 
\ \leq\ c^{k-1}
\end{align*}
for some constant $c<1$.
Analogously, one can pick $c$ larger and show that 
if $j-j_0 \leq -k\sqrt{t/u}$,
then $f(j) \leq c^{k-1} f(j_0)$.

We apply these bounds in the following fashion.
\begin{itemize}
\item[] \hspace{2.5cm} $\vdots$ 
\item $f(j) \leq c^2 f(j_0)$ for all $j-j_0\in 
[-4\sqrt{t/u}, -3\sqrt{t/u}]$.
\item $f(j) \leq c f(j_0)$ for all $j-j_0\in 
[-3 \sqrt{t/u}, -2\sqrt{t/u}]$.
\item $f(j) \leq f(j_0)$ for all 
$j-j_0\in [-2 \sqrt{t/u}, 2\sqrt{t/u}]$.
\item $f(j) \leq c f(j_0)$ for all $j-j_0\in 
[2\sqrt{t/u}, 3\sqrt{t/u}]$.
\item $f(j) \leq c^2 f(j_0)$ for all $j-j_0\in 
[3 \sqrt{t/u}, 4\sqrt{t/u}]$.
\item[] \hspace{2.5cm} $\vdots$
\end{itemize}
Summing these across all $j$ gives
$$\sum_{j=0}^{u^{0.99}} f(j) \leq 
\sqrt{\frac tu} (4+2c+2c^2+\dots ) f(j_0) \leq C
\sqrt{\frac tu} f(j_0),$$
where $C=4+2c+2c^2+\dots = \frac{4-2c}{1-c}$. 
This completes the proof of (b).
\end{proof}
\subsection{Spread Families}
\label{subsec:spread}
In this subsection, we recall some background about 
spread families and collect some useful facts 
that will be helpful in constructing a spread approximation 
in \Cref{sec:spread_approx}.
\begin{definition}
Let $N$ be a positive integer and $r>0$ be a real number.
A family $\calF\subseteq 2^{[N]}$ is said to be 
\vocab{$r$-spread} if for all $S\subseteq [N]$, we have 
$$|\calF[S]| \leq r^{-|S|}|\calF|.$$
(In other words, for any $S\subseteq [N]$, 
at most $r^{-|S|}$ fraction of sets in $\calF$ 
contains $S$.)

A family $\calF\subseteq 2^{[N]}$ is said to be 
\vocab{$(r,t)$-spread} if $\calF(T)$ is $r$-spread 
for all $t$-element subsets $T\subseteq [N]$.
\end{definition}

In this paper, we will always use this notion with 
$N=n^2$ and view $[N]$ as $[n]^2$
by relabeling the elements.

Spread families behave nicely in terms of 
applying the probabilistic method.
This observation was first discovered in a breakthrough 
on the sunflower problem
by Alweiss, Lovett, Wu, and Zhang \cite{sunflower},
and later sharpened by Tao \cite{spread_lemma}
and Hu \cite{spread_lemma_2}.
In the following lemma, a \vocab{$p$-random subset}
$W\subseteq [N]$ is a random subset obtained by 
including each element in $[N]$ independently with 
probability $p$.
\begin{lemma}[Spread Lemma]
\label{lem:spread}
Let $N$, $n$, and $m$ be a positive integer
such that $n\leq N$. 
Let $\calF\subseteq \binom{[N]}{\leq n}$ be an 
$r$-spread family for some positive real number $r$
and $W$ be a $(m\delta)$-random subset of $[N]$
for some real number $0<\delta<\frac 1m$.
Then,
$$\mathbb P(\text{there exists } F\in\calF
\text{ such that }F\subseteq W) 
\geq 1 - n\left(\frac{5}{\log_2(r\delta)}\right)^m.$$
\end{lemma}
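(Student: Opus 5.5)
The proof runs $m$ rounds of random sampling, with each round shrinking the ``uncovered part'' of a set in $\calF$ by a factor $\beta := 5/\log_2(r\delta)$ in expectation. I would not prove it from scratch in the body of the paper; it is exactly the form of the spread lemma in Tao \cite{spread_lemma} and Hu \cite{spread_lemma_2}, so the main task is to reduce to their statement. Concretely, realize the $(m\delta)$-random set as a union $W = W_1\cup\dots\cup W_m$ of independent $\delta$-random subsets of $[N]$. Each element lies in this union with probability $1-(1-\delta)^m \le m\delta$. The event ``some $F\in\calF$ satisfies $F\subseteq W$'' is monotone increasing in $W$, so by the standard coupling it suffices to prove the bound for this union.

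The core is a one-round statement. Let $\calG\subseteq\binom{[N]}{\le k}$ be a family satisfying the spread condition and let $W'$ be $\delta$-random. For each $G\in\calG$, choose $G'\in\calG$ minimizing $|G'\setminus W'|$ among all $G'$ with $G'\setminus W'\subseteq G\setminus W'$. The claim is that the resulting family $\{G'\setminus W'\}$ has expected size at most $\beta k$, in the sense of the expected size of the set assigned to a fixed (or uniformly chosen) $G$.

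I would prove this by an encoding/counting argument. Fix $\ell$ and consider pairs $(W',G)$ with $|G'\setminus W'| = \ell$. Encode such a pair by $W'\cup G$, together with the identity of the subset $G'\setminus W'$ inside $G$ (at most $2^{k}$ choices, which is refined to $\binom{k}{\ell}$). Then $r$-spreadness bounds the number of $G$ containing a given $\ell$-set by $r^{-\ell}|\calG|$. Comparing the measure of $W'\cup G$ with that of $W'$ costs a factor $\delta^{-\ell}$. Together these give a probability of order $(C/(r\delta))^{\ell}$ up to binomial factors. Summing over $\ell$ with the logarithmic refinement of Tao yields the factor $5/\log_2(r\delta)$.

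Iterating, after round $i$ we have a family of ``residual'' sets $F^{(i)}\setminus(W_1\cup\dots\cup W_i)$, and the expected residual size is at most $n\beta^{i}$. After $m$ rounds, Markov's inequality gives that the probability the residual is nonempty is at most $n\beta^m$. An empty residual means some $F\in\calF$ lies in $W$, which is the claimed bound.

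The main obstacle is that spreadness is not obviously inherited by the residual family $\{F\setminus W_1\}$ restricted to minimal choices, so the one-round lemma cannot be naively reapplied. I would handle this as Tao does. Rather than asserting that the new family is spread, the induction always refers back to the original family $\calF$: the minimality choice is made inside $\calF$, and spreadness is applied to $\calF[S]$ for the sets $S = F'\setminus(W_1\cup\dots\cup W_i)$. This is legitimate because the counting step only ever uses $|\calF[S]|\le r^{-|S|}|\calF|$ for the original family. If this bookkeeping becomes too heavy, I would simply cite \cite[Lem.~1]{spread_lemma_2} with the constant $5$.
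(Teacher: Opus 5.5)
The paper has no argument of its own for this lemma: its proof is only the citation \cite[Prop.~2]{spread_lemma_2} (cf.\ \cite[Prop.~5]{spread_lemma}). Your fallback of citing therefore coincides with the paper. Your self-contained sketch, however, does not prove the lemma, and the gaps are in its core, not in bookkeeping.

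\textbf{The encoding.} As written it fails in three ways. If you record $W'\cup G$, passing from the measure of $W'$ to that of $W'\cup G$ costs $((1-\delta)/\delta)^{|G\setminus W'|}$, which can be of order $\delta^{-k}$ rather than $\delta^{-\ell}$. The part $W'\cap G$ is then not recoverable. And locating $T:=G'\setminus W'$ ``inside $G$'' is circular, since $G$ is what you are decoding.

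The workable encoding records $Z:=W'\cup T$ and uses one observation: every $G''\in\calG$ with $G''\subseteq Z$ is itself a competitor, because $G''\setminus W'\subseteq T\subseteq G\setminus W'$. By minimality, $G''\setminus W'=T$. So $T$ is one of the $\binom{k}{\ell}$ subsets of the first member of $\calG$ contained in $Z$. Then $W'=Z\setminus T$, and $G\in\calG[T]$ has at most $r^{-\ell}|\calG|$ choices. This gives $\Pr(|T|=\ell)\le\binom{k}{\ell}(r\delta)^{-\ell}$ for $G$ uniform. This is only an average over $G$; the ``fixed $G$'' reading of your one-round claim is false in general.

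\textbf{The iteration.} This is the decisive gap: the one-round estimate does not iterate as you assert. Its binomial factor depends on the size of the canonical set found inside $Z$. From the second round on, that set is a member of $\calF$ (or an earlier residual), whose size is bounded only by $n$, not by the current residual. After one round the residuals are small only in expectation; their maximum size can still be $n$.

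So each round again gives a bound of the form $\binom{n}{\ell}(r\delta)^{-\ell}$, with no geometric decay. Your claim that the expected residual after $i$ rounds is at most $n\beta^{i}$, on which the final Markov step rests, is therefore unsupported.

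What is actually needed is a contraction $\mathbb{E}|\mathbf{T}|\le\frac{5}{\log_2(r\delta)}\,\mathbb{E}|\mathbf{S}|$, valid for an arbitrary $r$-spread random set $\mathbf{S}$ with no uniform size bound. This is what the entropy argument of Tao and Hu proves, and it is the whole content of the lemma. The phrase ``summing over $\ell$ with the logarithmic refinement of Tao'' names this missing step rather than supplying it.

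\textbf{The obstacle you flag.} It is not a real one. The residual is a subset of the original random set $\mathbf{S}$, which is independent of $W_1,\dots,W_i$. Hence $\Pr(S\subseteq\mathbf{T}\mid W_1,\dots,W_i)\le\Pr(S\subseteq\mathbf{S})\le r^{-|S|}$, so spreadness passes to the residual automatically.
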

\begin{proof}
See \cite[Prop.~2]{spread_lemma_2}
(cf. \cite[Prop.~5]{spread_lemma}).
\end{proof}

We now extract the corollary of \Cref{lem:spread}
that we will use in the paper.
\begin{corollary}
\label{cor:spread_lemma}
Let $n\leq N$ be positive integers and $r>0$.
Let $\calG_1,\calG_2\subseteq \binom{[N]}{\leq n}$ 
be two $r$-spread families.
If $r\geq 2^{12} \log_2(2n)$, then 
there exists $G_1\in\calG_1$ and $G_2\in\calG_2$
such that $G_1\cap G_2=\emptyset$.
\end{corollary}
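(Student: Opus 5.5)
The plan is the standard random-partition argument built on \Cref{lem:spread}. Colour each element of $[N]$ independently red or blue with probability $\tfrac12$ each, and let $W_1$ be the red elements and $W_2$ the blue elements. Then $W_1$ and $W_2$ are disjoint, and each is marginally a $\tfrac12$-random subset of $[N]$. If, with positive probability, there are $G_1\in\calG_1$ with $G_1\subseteq W_1$ and $G_2\in\calG_2$ with $G_2\subseteq W_2$, then $G_1\cap G_2\subseteq W_1\cap W_2=\emptyset$ and we are done. By the union bound it suffices to show that, for $i=1,2$,
$$\mathbb P(\text{no } G\in\calG_i \text{ with } G\subseteq W_i) < \tfrac12 .$$

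For this estimate I apply \Cref{lem:spread} to $\calG_i$, which is $r$-spread and contained in $\binom{[N]}{\le n}$. I take $m=\log_2(2n)$, or more precisely its ceiling, and $\delta=\frac1{2m}$, so that $m\delta=\tfrac12$ and $0<\delta<\frac1m$. The failure probability is then at most
$$n\left(\frac{5}{\log_2(r\delta)}\right)^m .$$
The hypothesis $r\ge 2^{12}\log_2(2n)$ gives $r\delta=\frac{r}{2m}\ge 2^{11}\cdot\frac{\log_2(2n)}{\lceil\log_2(2n)\rceil}\ge 2^{10}$. Hence $\log_2(r\delta)\ge 10$ and $\frac{5}{\log_2(r\delta)}\le\tfrac12$. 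The failure probability is therefore at most $n\,2^{-m}\le n\cdot\frac1{2n}=\tfrac12$.

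The one thing to check is that this inequality can be made strict, since we need the sum of the two failure probabilities to be less than $1$. The bound from \Cref{lem:spread} is only $\le \tfrac12$ for each family, which sums to exactly $1$. There is slack available: using $\log_2(r\delta)\ge 10$ more carefully gives $\frac{5}{\log_2(r\delta)}\le\tfrac12$ with room to spare once $\lceil\log_2(2n)\rceil/\log_2(2n)<2$. Alternatively, one can take $m=\lceil\log_2(2n)\rceil+1$, which still leaves $r\delta\ge 2^{12}\log_2(2n)/(2m)\ge 2^{9}$ and $\log_2(r\delta)\ge 9$. Then $\frac{5}{\log_2(r\delta)}\le\frac59$, and $n(5/9)^m<\tfrac12$ for large $n$. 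For small $n$ one adjusts constants, or simply uses the first choice with the rounding slack.

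I expect this bookkeeping with the constant $2^{12}$ to be the only mildly delicate point. The combinatorial idea itself, splitting $[N]$ into two disjoint random halves and finding one set from each family inside its own half, is straightforward.
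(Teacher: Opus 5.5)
Your argument is correct and matches the paper's proof: you split $[N]$ into a $\tfrac12$-random set and its complement, and apply \Cref{lem:spread} with $m\approx\log_2 n$ and $\delta=\frac1{2m}$. Your first route already gives the strict inequality for every $n\ge1$. With $x=\log_2(2n)\ge1$ one has $\lceil x\rceil<x+1\le 2x$, so $r\delta>2^{10}$ and $5/\log_2(r\delta)<\tfrac12$, hence the failure probability is $<n2^{-m}\le\tfrac12$. Your choice $m=\lceil\log_2(2n)\rceil$ is in fact cleaner than the paper's $m=\lceil\log_2 n\rceil$, whose displayed bound silently drops the factor $n$ and so is only valid for large $n$. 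Discard your alternative with $m=\lceil\log_2(2n)\rceil+1$, though. Its base $5/9$ exceeds $\tfrac12$ while $m\approx\log_2 n$, so $n(5/9)^m$ grows like $n^{1+\log_2(5/9)}\approx n^{0.15}$ and is \emph{not} below $\tfrac12$ for large $n$.
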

\begin{proof}
Suppose that $W_1$ is a $\tfrac 12$-random 
subset of $[N]$, and $W_2 = [N]\setminus W_1$,
so $W_2$ is also a $\tfrac 12$-random subset of $[N]$
(but is not independent of $W_1$).
We apply the above \Cref{lem:spread} with parameters 
$m = \lceil \log_2 n\rceil \leq \log_2(2n)$
and $\delta = (2\lceil \log_2 n\rceil)^{-1}$.
In particular,
$$r\delta \geq \frac{2^{12}\log_2(2n)}{2\lceil\log_2 n\rceil}
\geq \frac{2^{12}\log_2(2n)}{2\log_2(2n)} \geq 2^{11}$$
so we have
\begin{align*}
\mathbb P(\text{there exists }G_1\in\calG_1 
\text{ such that }G_1\subseteq W_1) 
&> 1 - n\left(\frac{5}{\log_2(r\delta)}\right)^m  \\
&\geq 1 -  \left(\frac 5{11}\right)^{\lceil\log_2 n\rceil} 
> \frac 12.
\end{align*}
Similarly, we get that 
$$\mathbb P(\text{there exists }G_2\in\calG_2 
\text{ such that }G_2\subseteq W_2)  > \frac 12.$$
By union bound, we obtain that
with positive probability, 
these two events happen simultaneously.
In particular, there is a choice of $W_1$ and $W_2$
such that $G_1\subseteq W_1$ and $G_2\subseteq W_2$
for some $G_1\in\calG_1$ and $G_2\in\calG_2$.
Since $W_1$ and $W_2$ are defined to be disjoint,
we get that $G_1$ and $G_2$ are disjoint.
\end{proof}

The following observation indicates one way to find 
a spread family.
In particular, it explains how the conditions in 
\Cref{cor:spread_lemma} can be realized.
\begin{observation}
\label{obs:finding_spread}
For any family $\calF\subseteq 2^{[N]}$ and $r>0$, 
let $X\subseteq [N]$ be a set maximal under inclusion
that satisfies $|\calF(X)| \geq r^{-|X|}|\calF|$.
Then $|\calF(X)|$ is $r$-spread.
\end{observation}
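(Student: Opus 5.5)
We argue by contradiction, using only the maximality of $X$ and the multiplicativity of the threshold $r^{-|X|}$. Write $\calG := \calF(X)$. Note first that $\calG$ is nonempty, since $|\calG| \geq r^{-|X|}|\calF|$; we implicitly assume $\calF\neq\emptyset$, as otherwise the statement is vacuous. Moreover, every set in $\calG$ is disjoint from $X$. To show that $\calG$ is $r$-spread, we must check that $|\calG[S]| \leq r^{-|S|}|\calG|$ for every $S\subseteq[N]$.

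We split into two cases according to whether $S$ meets $X$. If $S\cap X\neq\emptyset$ and $S\neq\emptyset$, then no set of $\calG$ contains $S$, so $|\calG[S]| = 0$ and the inequality holds trivially. The case $S=\emptyset$ is trivial as well, since then both sides equal $|\calG|$.

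So the only real case is a nonempty $S$ disjoint from $X$. Suppose for contradiction that $|\calG[S]| > r^{-|S|}|\calG|$. The key identity is that for $S$ disjoint from $X$,
$$\calG[S] = \{F\setminus X : F\in\calF,\ X\cup S\subseteq F\},$$
so $|\calG[S]| = |\calF[X\cup S]| = |\calF(X\cup S)|$. Combining this with the assumed inequality, the bound $|\calG|\geq r^{-|X|}|\calF|$, and $|X\cup S| = |X|+|S|$, we get
$$|\calF(X\cup S)| > r^{-|S|}|\calG| \geq r^{-|S|}r^{-|X|}|\calF| = r^{-|X\cup S|}|\calF|.$$
Thus $X\cup S$ also satisfies the defining condition and strictly contains $X$, contradicting the maximality of $X$ under inclusion. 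This proves that $\calG = \calF(X)$ is $r$-spread.

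There is no real obstacle in this argument. The only care needed is in the bookkeeping: handling $S$ that meets $X$ separately, and noting that $F\mapsto F\setminus X$ is injective on $\calF[X]$, which is what justifies $|\calG[S]| = |\calF[X\cup S]|$. We also remark that a maximal $X$ always exists when $\calF\neq\emptyset$: the empty set satisfies the condition, and $[N]$ is finite.
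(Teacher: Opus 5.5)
Your proof is correct and is the same argument as the paper's: a nonempty $S$ disjoint from $X$ violating spreadness would make $X\cup S$ satisfy $|\calF(X\cup S)| \geq r^{-|X\cup S|}|\calF|$, which contradicts the maximality of $X$. Your separate handling of $S$ meeting $X$ and of $S=\emptyset$, and the identity $|\calF(X)[S]| = |\calF(X\cup S)|$, spell out bookkeeping that the paper leaves implicit.
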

\begin{proof}
We have to show that for any $Y\subseteq [N]\setminus X$,
we have $|\calF(X\cup Y)| \leq r^{-|Y|} |\calF(X)|$.
Assume that this is not true. Then we have 
$$|\calF(X\cup Y)| > r^{-|Y|} |\calF(X)| 
\geq r^{-|X\cup Y|}|\calF|,$$
which implies that $X\cup Y$ is a set larger than $X$
that satisfies the condition, contradicting the maximality of $X$.
\end{proof}
\section{Spread Approximation}
\label{sec:spread_approx}
The goal of this section is to prove the following theorem,
which allows us to replace $\calF$ with its \vocab{spread approximation}
$\calS$ that has a simpler structure.
Throughout this section, we assume that $n$
and $t$ are positive integers such that
$n-t > n^{1/2 + \eps}$, an assumption strictly stronger than 
the hypothesis of \Cref{thm:not_too_close}.
\begin{restatable}{theorem}{spreadapprox}
\label{thm:accurate_spread_approx}
Let $\eps \in \left(0,\frac 12\right)$. Then there exists $n_0=n_0(\eps)$
such that for any $n>n_0$ and $n^{0.99}<t<n-n^{1/2+\eps}$,
if $\calF\subseteq \Sigma_n$ is a $t$-intersecting family,
then there exists a family $\calS$ of partial permutations such that 
\begin{enumerate}[label=(\alph*)]
\item each element of $\calS$ is a partial permutation 
of size at most $t+0.001 u$;
\item $|\calF\setminus\calF[\calS]| < \frac 1n\cdot u!$;%
\footnote{Note that the choice of $\frac 1n$ in 
the condition (b) is not very essential;
it is to ensure that $|\calF\setminus \calF[\calS]|$
grows much slower than $u!$, which will make the application 
in \Cref{sec:conclusion} more convenient.} and
\item $\calS$ is $t$-intersecting.
\end{enumerate}
\end{restatable}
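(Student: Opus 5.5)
We will run the standard spread-approximation procedure of Kupavskii and Zakharov, preceded by an iterative density-boost phase. The basic step is as follows. Given a subfamily $\calF'\subseteq\calF$ and a spreadness parameter $r$, repeatedly pick a maximal $X$ with $|\calF'(X)|\ge r^{-|X|}|\calF'|$, as in \Cref{obs:finding_spread}. This makes $\calF'(X)$ $r$-spread. Put $X$ into $\calS$, delete $\calF'[X]$, and iterate. We stop once the remaining family has size below $\frac1n u!$; this gives (b).

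For (a), suppose some $X$ with $|X|>t+0.001u$ were chosen. Then
\[
|\calF'|\le r^{|X|}(n-|X|)!,
\]
and the remaining family has size at least $\frac1n u!$. With $r$ polylogarithmic this forces a contradiction whenever $r^{t+0.001u}(u-0.001u)!\ll u!/n$. Since $u!/(0.999u)!\approx (u/e)^{0.001u}$, this requires $(u/e)^{0.001u}\gg n\,r^{t+0.001u}$. That is false when $t\gg u$, because the $r^{t}$ factor dominates. So the naive bound fails. This is exactly why a density boost is needed: we must measure spreadness relative to $t$-element restrictions. In other words, we use the $(r,t)$-spread notion and compare $|\calF'|$ not to $u!$ but to $|\calF'(T)|$ for $t$-sets $T$.

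For (c), take $A,B\in\calS$ with $|A\cap B|<t$. The families $\calF(A)$ and $\calF(B)$, restricted to elements avoiding $A\cup B$ and viewed inside $\Sigma$ on the complementary rows and columns, remain $r'$-spread for $r'$ comparable to $r$ divided by $O(|A\cup B|/n)$ losses. \Cref{cor:spread_lemma} then gives $F_1\supseteq A$ and $F_2\supseteq B$ in $\calF$ with $(F_1\setminus A)\cap(F_2\setminus B)=\emptyset$. Hence $|F_1\cap F_2|\le|A\cap B|<t$, a contradiction. This requires $r\ge 2^{12}\log_2(2n)$ after the restriction losses.

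The main obstacle is reconciling (a) with (b) when $u$ is as small as $n^{1/2+\eps}$. Here we iterate. First we prove a weak density boost: there is a set $T$ of size $t$ with $|\calF[T]|\ge \beta_0|\calF|/\binom{n}{t}\cdots$. Concretely, we aim to show $\calF$ has a $t$-set $Y$ with $|\calF(Y)|\ge (c)^{?}\,|\calF|/n^{t}$ large relative to $(n-t)!$, obtained from a crude run of the spread procedure with a moderate $r$. Then we restrict to sets extending dense $t$-sets. In the $(r,t)$-spread framework the size bound in (a) becomes
\[
|X\setminus T|\lesssim \frac{\log(\text{density ratio})}{\log(u/r)},
\]
and each better boost improves this ratio. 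After $O(1/\eps)$ rounds, using that $u\ge n^{1/2+\eps}$ so that $\log n$-type losses are absorbed by $(u/e)^{0.001u}$, we reach $|X|\le t+0.001u$.

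I expect the delicate part to be choosing the sequence of spreadness parameters and showing that each round's approximation feeds a strictly better boost. The $1/2+\eps$ threshold should appear at this step: the boost gain must beat the $n^{2}$ ground-set and union-bound losses against $u^{2}/n$.
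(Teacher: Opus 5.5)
Your intersection step (c) fails as stated. Spreadness of $\calF(A)$ bounds the fraction of its members meeting $B\setminus A$ only by $|B\setminus A|/r$. But $|B\setminus A|=|B|-|A\cap B|$ can be as large as $x+0.001u$, where $x=t-|A\cap B|$. For polylogarithmic $r$ this bound is vacuous, and even for $r=0.1u$ it is vacuous once $x$ is large. So you cannot restrict to extensions that avoid $B$ altogether.

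The paper keeps only those $G\in\calF_i(S_i)$ with $|G\cap(S_j\setminus S_i)|\le\lfloor (x-1)/2\rfloor$. The union bound then runs over $\lceil x/2\rceil$-subsets:
\[
\binom{x+0.001u}{\lceil x/2\rceil}(0.1u)^{-\lceil x/2\rceil}\le\Bigl(\frac{20e}{u}+\frac{0.02e}{x}\Bigr)^{\lceil x/2\rceil}\le\frac12 .
\]
Two disjoint extensions then meet in at most $(t-x)+2\cdot\frac{x-1}{2}<t$ elements.

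Even with this trick, the case $x=1$ forces complete avoidance, hence $r\gtrsim|B\setminus A|$, which can be about $0.001u$. So the final spread parameter must be linear in $u$; the paper takes $r=0.1u$. Then in your own heuristic $|X\setminus T|\lesssim\log(\text{ratio})/\log(u/r)$ the denominator is a constant. The final round therefore needs a boost $|\calF(X)|\ge e^{-o(u)}|\calF|$ on some $X$ of size $t-o(u)$. The paper reaches $|X|=t-n^{1/2}$ with $|\calF(X)|\ge n^{-n^{1/2+2\delta}}|\calF|$. A boost like $|\calF|/n^{t}$ is useless for this.

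The second gap is that you defer, as ``the delicate part'', exactly the mechanism by which an approximation yields a better boost. That mechanism is the heart of the proof, and the proposal does not supply it.

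In the paper, the intermediate approximations are not $t$-intersecting but only $t'$-intersecting with $t'=t-n^{1/2}$. This slack is what permits $r=10u/\sqrt n$. For $x>n^{1/2}$, using the crude bound $|S_j\setminus S_i|\le u+x$, the same threshold union bound is at most $\bigl(\tfrac{2e}{r}+\tfrac{e}{5}\bigr)^{\lceil x/2\rceil}$. Meanwhile $r/2=5u/\sqrt n>5n^{2\delta}$ still beats $2^{12}\log_2(2n)$.

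Now take such an $\calS$ of uniformity $t+d$ with $|\calF[\calS]|\ge\frac12|\calF|$, and fix one $S\in\calS$. Every member of $\calF[\calS]$ meets $S$ in at least $t'$ elements. Pigeonholing over the $t'$-subsets of $S$ gives $X$ with $|X|=t'$ and $|\calF(X)|\ge\frac12\binom{n}{d+n^{1/2}}^{-1}|\calF|$. The loss is exponential in $d$, not in $t$.

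Rerunning the spread procedure on each residual family from such an $X$, with spreadness measured inside $\calF_j(X_j)$ and $r=10u/\sqrt n$, bounds the new excess by $(1-\delta)d$ as long as $d>n^{1/2+2\delta}$. The reason is that the pigeonhole loss per unit of excess, about $n/d<n^{1/2-2\delta}$, is smaller than the spread gain per unit, about $u/r\asymp\sqrt n$. This balance, not ``$n^2$ ground-set losses'', is where the exponent $\frac12$ comes from.

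Each round shrinks the excess only by a factor $1-\delta$ with $\delta=\eps/4$, so the paper needs $O_\eps(\log n)$ rounds, not $O(1/\eps)$. The leftover $|\calF\setminus\calF[\calS]|$ also doubles every round, because the boost step needs $|\calF[\calS]|\ge\frac12|\calF|$. The paper absorbs this by using thresholds $2^{i}n^{-M}u!$.
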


We prove this in two steps. 
The first step (done in \Cref{subsec:iterative}) is iterative. 
We alternate between finding a spread approximation 
and proving a density boost statement.
Spread approximation yields a density boost statement,
which is then used to get a spread approximation 
with smaller uniformity (i.e., the maximum size across all 
partial permutations in our $\calS$). 
We repeat until the uniformity reaches the target.
At this point, the resulting spread approximation 
will only be $t'$-intersecting 
for $t'$ slightly smaller than $t$.
In the second step (done in \Cref{subsec:accurate}),
we turn the final density boost statement into 
an accurate spread approximation that is $t$-intersecting.

\subsection{Iterative Spread Approximation}
\label{subsec:iterative}
The goal of this subsection is to prove the following 
lemma, written in the form that is amenable to induction.
Here, we ignore floors and ceilings.
\begin{lemma}
\label{lem:iterative}
Let $\delta \in \left(0,\tfrac 12\right)$ and $M>0$. 
There exists $n_0=n_0(\delta,M)$ such that for all $n>n_0$
and $n^{0.99} < t < n-n^{1/2+2\delta}$, the following assertions $A_i$ 
and $B_i$ hold for all nonnegative integers $i$ 
such that $(1-\delta)^i u > n^{1/2+2\delta}$.
\begin{enumerate}
\item[($A_i$)] (Spread approximation) 
For any $t$-intersecting family $\calF\subseteq \Sigma_n$,
there exists a family $\calS$ such that 
\begin{enumerate}[label=(\roman*)]
\item each element of $\calS$ is a partial permutation of size 
at most $q=t+(1-\delta)^i u$.
\item $|\calF\setminus\calF[\calS]| < \frac {2^i}{n^M}\cdot u!$.
\item $\calS$ is $t'$-intersecting,
where $t'=t - n^{1/2}$.
\end{enumerate}
\item[($B_i$)] (Density boost) 
For any $t$-intersecting $\calF\subseteq \Sigma_n$ 
such that $|\calF|\geq \frac {2^{i+1}}{n^M} \cdot u!$,
there exists $X\subseteq [n]^2$ such that
$|X| = t-n^{1/2}$ and 
$$|\calF(X)| \geq 
\frac 1{(10n^{1/2-\delta})^{(1-\delta)^{i+1} u}}
|\calF|.$$
\end{enumerate}
\end{lemma}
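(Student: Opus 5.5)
Throughout, the plan is an induction on $i$ in the order $A_0$, $B_0$, $A_1$, $B_1$, \dots, with two kinds of implication: $A_i\Rightarrow B_i$ (a spread approximation produces a density boost) and $B_{i-1}\Rightarrow A_i$ (a density boost, applied repeatedly, produces a spread approximation of smaller uniformity).

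\textbf{Base case $A_0$.} Take $\calS=\calF$ itself. Every permutation has size $n=t+u$, so condition (i) holds with $q=t+u$. Condition (ii) is vacuous, and $\calF$ is $t$-intersecting, hence $t'$-intersecting.

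\textbf{$A_i\Rightarrow B_i$.} Let $\calF$ satisfy $|\calF|\geq \frac{2^{i+1}}{n^M}u!$ and take $\calS$ from $A_i$. Then
$$|\calF[\calS]|\geq |\calF|-\tfrac{2^i}{n^M}u!\geq \tfrac12|\calF|.$$
Each $S\in\calS$ has $|S|\leq q$, so by Lemma~\ref{lem:binom_bound} it contains at most $\binom{q}{t'}\leq (eq/(q-t'))^{q-t'}$ subsets $X$ of size $t'=t-n^{1/2}$. Since $\calS$ is $t'$-intersecting, I would pick a single $S_0\in\calS$. Every $F\in\calF[\calS]$ contains some $S\in\calS$, and $|S\cap S_0|\geq t'$, so $F$ contains some $t'$-subset of $S_0$.

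$S_0$ has at most $\binom{|S_0|}{t'}$ such subsets, and $|S_0|-t'\leq (1-\delta)^iu+n^{1/2}$. Pigeonholing gives an $X\subseteq S_0$ with $|X|=t'$ and
$$|\calF(X)|\geq \tfrac12\binom{q}{t'}^{-1}|\calF|.$$
Write $d=(1-\delta)^iu+n^{1/2}$. Then $\binom{q}{t'}\leq (eq/d)^d\leq (en/d)^d$, and $d\gtrsim (1-\delta)^iu\geq n^{1/2+2\delta}$, so $en/d\leq e\,n^{1/2-2\delta}$. Since $n^{1/2}\ll (1-\delta)^iu$, this gives roughly
$$|\calF(X)|\geq (e\,n^{1/2-2\delta})^{-(1+o(1))(1-\delta)^iu}|\calF|.$$
The target in $B_i$ has the larger base $10n^{1/2-\delta}$ but the smaller exponent $(1-\delta)^{i+1}u$. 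Taking logarithms, I need
$$(1-\delta)^i(\tfrac12-2\delta)\log n\leq (1-\delta)^{i+1}(\tfrac12-\delta)\log n,$$
i.e.\ $\tfrac12-2\delta\leq(1-\delta)(\tfrac12-\delta)=\tfrac12-\tfrac32\delta+\delta^2$, which holds. The slack absorbs the constants and the $o(1)$ for large $n$, so $B_i$ follows.

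\textbf{$B_{i-1}\Rightarrow A_i$.} This is the main obstacle. I would build $\calS$ by a peeling/greedy procedure in the style of Kupavskii--Zakharov, with $r=10n^{1/2-\delta}$ and current family $\calF_0=\calF$:
\begin{enumerate}
\item If $|\calF_j|<\frac{2^i}{n^M}u!$, stop.
\item Otherwise apply $B_{i-1}$ to $\calF_j$ (a subfamily of a $t$-intersecting family is still $t$-intersecting) to get some $X$ with $|X|=t'$ and $|\calF_j(X)|\geq r^{-(1-\delta)^iu}|\calF_j|$.
\item Extend $X$ to a maximal $Y\supseteq X$ with $|\calF_j(Y)|\geq r^{-|Y\setminus X|}|\calF_j(X)|$. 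By Observation~\ref{obs:finding_spread}, $\calF_j(Y)$ is $r$-spread.
\item Since $|\calF_j(Y)|\leq (n-|Y|)!$, the spreadness lower bound forces $|Y|-t'\lesssim(1-\delta)^iu$, giving $|Y|\leq t+(1-\delta)^iu$ up to lower-order terms that the constants must absorb.
\item Add $Y$ to $\calS$, remove $\calF_j[Y]$, and continue.
\end{enumerate}
The leftover is below $\frac{2^i}{n^M}u!$, which gives (ii).

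For the intersection property (iii), take two elements $Y_1,Y_2$ of $\calS$ and suppose $|Y_1\cap Y_2|<t'$. The corresponding families $\calF_{j}(Y_1)$ and $\calF_{j}(Y_2)$, restricted to the ground set avoiding the elements of $Y_1\cup Y_2$, remain $r/2$-spread up to a loss I must control. Since $r\gg\log n$, Corollary~\ref{cor:spread_lemma} then yields disjoint members $G_1,G_2$, and $F_1=Y_1\cup G_1$, $F_2=Y_2\cup G_2$ would intersect in fewer than $t$ elements, a contradiction. The delicate points are making the restriction preserve spreadness, which needs $|\calF(Y)|$ to dominate the sets meeting $Y_1\cup Y_2$, and checking that the $n^{1/2}$ slack between $t$ and $t'$ covers the exceptional elements. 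If that restriction argument fails, I would fall back to the standard trick of bounding the $\calF(Y_1)$-mass of sets hitting any fixed element by spreadness, which costs a factor $|Y_1\cup Y_2|/r$ that must be summed against a $(1-\delta)$-loss in the exponent.
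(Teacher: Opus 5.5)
Your base case and your proof of $A_i\Rightarrow B_i$ match the paper's: you pigeonhole over the $t'$-subsets of a single $S_0\in\calS$ and compare exponents via $\tfrac12-2\delta<(1-\delta)(\tfrac12-\delta)$. The gap is in $B_{i-1}\Rightarrow A_i$. It begins with your choice of spreadness parameter, $r=10n^{1/2-\delta}$, which is just the density-boost base $\rho:=10n^{1/2-\delta}$ reused; $r$ has to scale with $u$.

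\textbf{Step 4 fails for small $u$.} Suppose $|Y|=t'+d$. Combine $(n-|Y|)!\ge|\calF_j(Y)|\ge r^{-d}\rho^{-(1-\delta)^iu}\tfrac{2^i}{n^M}u!$ with $u!/(u+n^{1/2}-d)!\ge(u/e)^{d-n^{1/2}}$. This gives roughly $\bigl(\tfrac{u}{er}\bigr)^{d}\lesssim(u/e)^{n^{1/2}}n^M\rho^{(1-\delta)^iu}$. So ruling out $d>(1-\delta)^iu+n^{1/2}$ needs $u/(er)$ to beat $\rho$, i.e.\ $r\ll u\,n^{-1/2+\delta}$. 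With $r=\rho$ that means $u\gg n^{1-2\delta}$. At, say, $u=n^{1/2+3\delta}$ you only get $d\lesssim\tfrac{1-2\delta}{8\delta}(1-\delta)^iu$, so $|Y|-t$ may be a large multiple of $(1-\delta)^iu$ and (i) fails. This is not just lossy bookkeeping: since $u!/(u-d')!\le u^{d'}$, the counting genuinely cannot exclude such $Y$.

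\textbf{Step (iii) is missing the key idea, and it forces the opposite constraint on $r$.} Let $x=t-|Y_1\cap Y_2|>n^{1/2}$. The set $Y_2\setminus Y_1$ can have up to $u+x$ elements. Spreadness only bounds the fraction of $\calF_{j_1}(Y_1)$ meeting it by $|Y_2\setminus Y_1|/r$, which can be $\gg1$ for any admissible $r$. So neither restricting to members avoiding $Y_1\cup Y_2$ nor your per-element union-bound fallback yields anything.

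The paper instead \emph{allows} overlap. Keep only the members of $\calF_{j_1}(Y_1)$ that meet $Y_2\setminus Y_1$ in at most $\lfloor(x-1)/2\rfloor$ elements. A discarded member contains a $\lceil x/2\rceil$-subset of $Y_2\setminus Y_1$, so by $r$-spreadness the discarded fraction is at most $\binom{u+x}{\lceil x/2\rceil}r^{-\lceil x/2\rceil}\le\bigl(\tfrac{2e(u+x)}{xr}\bigr)^{\lceil x/2\rceil}$. This is tiny once $r\ge10u/x$. The kept family is then $(r/2)$-spread, \Cref{cor:spread_lemma} gives disjoint $G_1,G_2$, and $|(Y_1\cup G_1)\cap(Y_2\cup G_2)|\le(t-x)+2\lfloor(x-1)/2\rfloor<t$. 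This is where the slack $t-t'=n^{1/2}$ is used: it guarantees $x>n^{1/2}$, so $r\gtrsim u/\sqrt n$ suffices. Your $r=10n^{1/2-\delta}$ violates this whenever $u\gtrsim n^{1-\delta}$.

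\textbf{The fix.} The two requirements carve out the window $u/\sqrt n\lesssim r\ll u\,n^{-1/2+\delta}$, and the paper takes $r=10u/\sqrt n$. Then $u/(er)=\sqrt n/(10e)$. The paper terminates when $|S_j|>q$, which is equivalent to your automatic bound. The leftover is then at most about $(100e\,n^{-\delta})^{(1-\delta)^iu}(u/e)^{n^{1/2}}u!$, which is negligible since $(1-\delta)^iu>n^{1/2+2\delta}$. The $n^{-\delta}$ margin in the density-boost base is exactly what pays for $r^d$.
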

\begin{proof}
We use induction on $i$ as follows.
Note that $A_0$ is clear by taking $\calS=\calF$,
and we prove that 
(1) $A_i$ implies $B_i$ and (2) $B_i$ implies $A_{i+1}$.

\textbf{Proof that $A_i$ implies $B_i$.}
Let $\calF\subseteq\Sigma_n$ be $t$-intersecting 
and $|\calF| \geq \frac{2^{i+1}}{n^M}\cdot u!$.
Take $\calS$ as in the assertion $A_i$,
so $|\calF\setminus\calF[\calS]| < \frac{2^i}{n^M}\cdot u! 
< \frac 12|\calF|$, which means that 
$|\calF[\calS]| \geq \frac 12|\calF|$.%
\footnote{This part of the argument explains the factor 
$2^i$ in the bound $\frac{2^i}{n^M}\cdot u!$:
the required size of $\calF$ grows by a factor of $2$
at each step. The $n^M$ in the denominator is designed 
to absorb this $2^i$ when we apply this lemma.}

Fix any element $S\in \calS$.
Since $\calS$ is $t'$-intersecting,
any element in $\calF[\calS]$ must contain a $t'$-element subset 
of $S$.
Indeed, for any $F\in\calF[\calS]$, we have that 
$F$ contains some $T\in\calS$, and because 
$|F\cap S| \geq |T\cap S| = t'$, we see that $F$ contains 
some $t'$-element subset of $S$.

Since there are $\binom{|S|}{t'}< \binom{t+(1-\delta)^iu}{t'}$
possible $t'$-element subsets of $S$,
by the pigeonhole principle, one of them, 
say $X$, must be contained in a
$\binom{t+(1-\delta)^iu}{t'}^{-1}$-fraction 
of elements of $|\calF[\calS]|$.
In particular, there exists a set $X$ with size $t'$ such that
\begin{align*}
|\calF(X)| &\geq \binom{t+(1-\delta)^i u}{t'}^{-1} |\calF[\calS]| \\
&\geq \frac 12\binom{t+(1-\delta)^i u}{t-n^{1/2}}^{-1}
 |\calF|
\tag{since $|\calF[\calS]|\geq \tfrac 12|\calF|$} \\
&= \frac 12 \binom{t+(1-\delta)^i u}
{(1-\delta)^i u + n^{1/2}}^{-1} |\calF| \\
&\geq \frac 12\binom{n}{(1-\delta)^i u + n^{1/2}}^{-1} |\calF| 
\\
&> \frac 12\left(\frac{(1-\delta)^i u + n^{1/2}}{en}\right)^{(1-\delta)^i u + n^{1/2}} |\calF|
\tag{\Cref{lem:binom_bound}} \\
&> \frac 12\left(\frac 1{e\,n^{1/2-2\delta}}\right)^{(1-\delta)^i u + n^{1/2}} |\calF| 
\tag{since $(1-\delta)^i u > n^{1/2+2\delta}$}\\
&> \left(\frac 1{10n^{1/2-2\delta}}\right)^{(1-\delta)^i u + n^{1/2}} |\calF| \\
&> \frac 1{(10n^{1/2-\delta})^{(1-\delta)^{i+1} u}} |\calF|.
\end{align*}
To justify the last step, we note that 
$$\left(\frac 12 - \delta\right)(1-\delta)
> \left(\frac 12 - 2\delta\right)\left(1+\frac{n^{1/2}}
{(1-\delta)^i u}\right),$$
since $\left(\tfrac 12 - \delta\right)(1-\delta)
> \left(\tfrac 12 - 2\delta\right)$ for all $\delta>0$ 
and $\frac{n^{1/2}}{(1-\delta)^i u} < n^{-2\delta}
\to 0$ as $n\to\infty$.

This gives the desired $X$, proving $B_i$.

\medskip
\textbf{Proof that $B_i$ implies $A_{i+1}$.}
We do this through a standard spread approximation argument
(e.g., as in \cite[Lem.~10]{spread_approx})
with a little modification of having a density boost step.

Let $\calF$ be a $t$-intersecting family 
such that $|\calF| \geq \frac{2^{i+1}}{n^M}u!$.
Let $q=t+(1-\delta)^{i+1}u$ and $r=\frac{10 u}{\sqrt n}$. 
Initialize $\calF_1=\calF$. For each $j\geq 1$, do the following.
\begin{enumerate}
\item If $|\calF_j| < \frac{2^{i+1}}{n^M}\cdot u!$, then terminate.
\item Otherwise, we can apply the assertion 
$B_i$ on $\calF_j$ to get 
$X_j$ such that $|X_j|=t-n^{1/2}$ and 
\begin{equation}
\label{eq:density_boost_usage}
|\calF_j(X_j)| \geq  \frac 1{(10n^{1/2-\delta})^{(1-\delta)^{i+1} u}} |\calF_j|.
\end{equation}
\item Take maximal (under inclusion) $S_j\supseteq X_j$ 
such that 
\begin{equation}
\label{eq:spreadness_1}
|\calF_j(S_j)| \geq r^{-|S_j|+|X_j|}|\calF_j(X_j)|.
\end{equation}
Thus, by \Cref{obs:finding_spread} 
(applied in $\calG=\calF_j(X_j)$ to get set $Y$,
and then take $S_j = Y\cup X_j$),
we get that $\calG(S_j) = \calF_j(S_j)$ is $r$-spread.
\item If $|S_j| > q$, then terminate.
\item Otherwise, let $\calF_{j+1} = \calF_j\setminus\calF_j[S_j]$.
\end{enumerate}
The algorithm terminates when either $|S_j| > q$ 
or $|\calF_j| < \frac{2^{i+1}}{n^M}\cdot u!$.
Suppose this happens when $j=N+1$.
We take $\calS=\{S_1,\dots,S_N\}$. 
We claim that this choice of $\calS$ works. 
Note that (i) is clearly satisfied.
We now check (ii) and (iii).
\begin{enumerate}
\item[(ii)]
To prove (ii), we note that 
$\calF\setminus\calF[\calS]\subseteq \calF_{N+1}$
since we only remove elements contained in $S_i$ for some $i$.
It thus suffices to show that $|\calF_{N+1}|<\frac{2^{i+1}}{n^M}\cdot u!$.
If the algorithm terminates at step (1), 
then this is obviously true.
Otherwise, the algorithm terminates at step (4),
so $|S_{N+1}| > q$.
Let $d=|S_{N+1}| - |X_{N+1}|=q-t+n^{1/2}$,
so $d > q-|X_{N+1}| > (1-\delta)^{i+1} u$.
Then we compute
\begin{align*}
|\calF_{N+1}| 
&\leq (10n^{1/2-\delta})^{(1-\delta)^{i+1}u} 
|\calF_{N+1}[X_{N+1}]| \tag{from \eqref{eq:density_boost_usage}}\\
&\leq r^d (10n^{1/2-\delta})^{(1-\delta)^{i+1}u}  
|\calF_{N+1}[S_{N+1}]|  \tag{from \eqref{eq:spreadness_1}}\\
&\leq r^d (10n^{1/2-\delta})^{(1-\delta)^{i+1}u}  
(n-|S_{N+1}|)! \tag{since $|\calF_{N+1}[S_{N+1}]| 
\leq |\Sigma_n[S_{N+1}]| = (n-|S_{N+1}|)!$}\\
&= r^d (10n^{1/2-\delta})^{(1-\delta)^{i+1}u} 
(u+n^{1/2}-d)!  \\
&\leq r^d (10n^{1/2-\delta})^{(1-\delta)^{i+1}u} 
\frac{u!}{(u/e)^{d-n^{1/2}}}
\tag{using \Cref{lem:factorial_bound} and note $d-n^{1/2}>0$}\\
&\leq \left(\frac{e\,r}{u^{1-0.1\delta}}\right)^{d} 
(10n^{1/2-\delta})^{(1-\delta)^{i+1}u} 
\cdot u! \tag{since $d>(1-\delta)^{i+1}u >\frac{10}{\delta} n^{1/2}$}\\
&\leq \left(\frac{e\,r}{u^{1-0.1\delta}}\right)^{(1-\delta)^{i+1} u} 
(10n^{1/2-\delta})^{(1-\delta)^{i+1}u} 
\cdot u! \qquad
\tag{since $d>(1-\delta)^{i+1}u$ and 
$er = \frac{10eu}{\sqrt n} < u^{1-0.1\delta}$}\\
&= \left(\frac{10e\, n^{1/2-\delta} r}
{u^{1-0.1\delta}}\right)^{(1-\delta)^{i+1} u}\cdot u! \\
&\leq \left(\frac{200e\, u^{0.1\delta}}
{n^{\delta}}\right)^{(1-\delta)^{i+1} u}\cdot u! 
\tag{since $r=\frac{10u}{\sqrt n}$}\\
&< \frac{2^{i+1}}{n^M} u!.
\tag{since $u<n$ and $n$ is sufficiently large}
\end{align*}
provided that $n$ is sufficiently large.%
\footnote{
While this last inequality might seem lossy
since the expression is actually less than $\frac 1{n^M}u!$,
the step that determines the factor $2^{i+1}$
is step (2), in which we have to meet 
the required condition to apply density boost.}
Thus, we have $|\calF_{N+1}| <  \frac{2^{i+1}}{n^M}\cdot u!$, 
as desired. 
\item[(iii)]
To show (iii), assume for the sake of contradiction 
that $|S_i\cap S_j| < t'$ for some $i,j$. 
From Step (3), $\calF_i(S_i)$ and $\calF_j(S_j)$
are both $r$-spread. 
Let $x=t-|S_i\cap S_j|$
and define 
\begin{align*}
\calG_i &= \left\{X\in\calF_i(S_i) : |X\cap (S_j\setminus S_i)| \leq 
\left\lfloor \tfrac {x-1}2\right\rfloor \right\}
\quad\text{and} \\
\calG_j &= \left\{X\in\calF_j(S_j) : |X\cap (S_i\setminus S_j)| \leq 
\left\lfloor \tfrac {x-1}2\right\rfloor\right\}.
\end{align*}
We bound the size of $\calG_i$. 
Note that an element of $\calF_i(S_i)\setminus\calG_i$
must contain a subset of size $\left\lfloor \tfrac {x-1}2\right\rfloor
+ 1 = \lceil x/2\rceil$ of $S_j\setminus S_i$.
By taking union bound across all such subsets
and using spreadness of $\calF_i(S_i)$, we get that
\begin{align*}
|\calF_i(S_i)\setminus\calG_i| &\leq 
\binom{|S_j\setminus S_i|}{\lceil x/2\rceil } r^{-\lceil x/2\rceil} 
|\calF_i(S_i)| \\
&\leq \binom{x+u}{\lceil x/2\rceil} 
r^{-\lceil x/2\rceil} |\calF_i(S_i)|
\tag{since $|S_j\setminus S_i| \leq n-|S_i\cap S_j| 
= u+x$} \\
&\leq \left(\frac{e(x+u)}{\lceil x/2\rceil}\right)^{\lceil x/2\rceil}  
r^{-\lceil x/2\rceil} 
|\calF_i(S_i)| \tag{using \Cref{lem:binom_bound}} \\
&\leq \left(\frac 1r\left(2e+\frac{2eu}x\right)\right)
^{\lceil x/2\rceil}
|\calF_i(S_i)| \\ 
&\leq\ \tfrac 12 |\calF_i(S_i)|
\end{align*}
where the last inequality follows from 
$x\geq t-t' \geq n^{1/2} = 10\frac ur$, and we can make $n$ 
(hence $r$) sufficiently large.

Thus, $|\calG_i|\geq \tfrac 12|\calF_i(S_i)|$,
We note that this implies that $\calG_i$ is $(r/2)$-spread. 
Indeed, for any nonempty $X$, we have
$$|\calG_i[X]| \leq |\calF_i(S_i)[X]| 
\leq r^{-|X|} |\calF_i(S_i)| 
\leq 2r^{-|X|} |\calG_i| \leq (r/2)^{-|X|} |\calG_i|.$$
Similarly, $\calG_j$ is $(r/2)$-spread. 
Now, by \Cref{cor:spread_lemma},
(since $r/2 = \frac{5u}{\sqrt n} >
5 n^{2\delta} > 2^{12}\log_2(2n)$ for all sufficiently large $n$),
there exists $G_i\in\calG_i$ 
and $G_j\in\calG_j$ such that $G_i\cap G_j=\emptyset$.
Note that $G_i\cup S_i\in \calF_i$ 
and $G_j\cup S_j\in\calF_j$ and 
\begin{align*}
|(G_i\cup S_i) \cap (G_j\cup S_j)| 
&\leq |S_i\cap S_j| + |G_i\cap S_j| + |G_j\cap S_i|  \\
&= |S_i\cap S_j| + |G_i\cap (S_j\setminus S_i)| 
+ |G_j\cap (S_i\setminus S_j)|  \\
&\leq (t-x) + \tfrac{x-1}2 + \tfrac{x-1}2 < t,
\end{align*}
contradicting the $t$-intersecting property of $\calF$.
This gives that $\calS$ is $t'$-intersecting.
\end{enumerate}
This completes the proof of the assertion $A_{i+1}$,
completing the induction.
\end{proof}
By using $B_i$ of the above lemma
where 
\begin{itemize}
\item $i$ is the largest integer such that 
$(1-\delta)^i u > n^{1/2+2\delta}$,
so $(1-\delta)^{i+1} u < n^{1/2+2\delta}$.
(so $i\leq C\log n$ for some constant $C=C(\delta)$)
and
\item $M > 2C\log 2+1$, so $2^{i+1} \leq n^{M-1}$,
\end{itemize}
we get that there exists $X\subseteq [n]^2$
such that
$$|\calF(X)| \geq 
\frac 1{(10n^{1/2-\delta})^{(1-\delta)^{i+1} u}}
|\calF|
\geq \frac 1{(10n^{1/2-\delta})^{n^{1/2+2\delta}}}
\geq \frac 1{n^{n^{1/2+2\delta}}} |\calF|.$$
\begin{corollary}[Strong Density Boost]
\label{cor:strong_density_boost}
Let $\delta \in \left(0,\tfrac 12\right)$. 
There exists $n_0=n_0(\delta)$
such that for every $n>n_0$, $n^{0.99} <t < n-n^{1/2+2\delta}$,
and $t$-intersecting family $\calF\subseteq\Sigma_n$ 
with $|\calF| \geq \frac{1}{n}\cdot u!$,
there exists $X\subseteq [n]^2$
such that $|X| = t-n^{1/2}$ and 
\begin{equation}
\label{eq:density_boost}
|\calF(X)|
\geq \frac 1{n^{n^{1/2+2\delta}}}|\calF|.
\end{equation}
\end{corollary}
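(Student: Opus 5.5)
The plan is to read \Cref{cor:strong_density_boost} directly off assertion $B_i$ of \Cref{lem:iterative}, applied with the same $\delta$ and a suitably large constant $M$. The one real constraint is that the size hypothesis of $B_i$ must be implied by $|\calF|\geq \frac1n\cdot u!$.

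\textbf{Choice of $i$.} Since $u>n^{1/2+2\delta}$, the index $i=0$ satisfies $(1-\delta)^i u>n^{1/2+2\delta}$. So we may take $i$ to be the largest nonnegative integer with $(1-\delta)^i u>n^{1/2+2\delta}$. Maximality gives
\[(1-\delta)^{i+1}u\leq n^{1/2+2\delta}.\]
Since $u<n$, we also get $i\leq \log n/\log\frac1{1-\delta}=:C\log n$, where $C=C(\delta)$.

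\textbf{Choice of $M$.} Fix $M$ depending only on $\delta$, say $M>2C\log 2+1$. Then
\[2^{i+1}\leq 2\cdot n^{C\log 2}\leq n^{M-1}\]
for $n$ large. Consequently, if $|\calF|\geq \frac1n u!$, then $|\calF|\geq \frac{2^{i+1}}{n^M}u!$. Now set $n_0(\delta)=n_0(\delta,M)$ from \Cref{lem:iterative} (enlarged if needed). The range $n^{0.99}<t<n-n^{1/2+2\delta}$ is exactly the lemma's hypothesis, so all assertions $A_j,B_j$ with $j\leq i$ hold, and in particular $B_i$ applies to $\calF$.

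\textbf{Applying $B_i$.} It produces $X$ with $|X|=t-n^{1/2}$ and
\[|\calF(X)|\geq (10n^{1/2-\delta})^{-(1-\delta)^{i+1}u}|\calF|.\]
Because $10n^{1/2-\delta}\geq1$ and $(1-\delta)^{i+1}u\leq n^{1/2+2\delta}$, the factor is at least $(10n^{1/2-\delta})^{-n^{1/2+2\delta}}$. Finally, $10n^{1/2-\delta}\leq n$ for large $n$, which gives \eqref{eq:density_boost}.

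\textbf{Main obstacle.} There is no substantive difficulty, since all the work is in \Cref{lem:iterative}. The only delicate point is the quantifier order: $M$ must be chosen depending only on $\delta$, uniformly in $n$, so that the growing factor $2^{i+1}$ (which is polynomial in $n$ since $i=O(\log n)$) is absorbed by $n^{M-1}$. This uniformity is what lets the threshold $\frac1n u!$ suffice.
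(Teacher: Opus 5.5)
Your proposal is correct and matches the paper's argument: take the largest $i$ with $(1-\delta)^i u > n^{1/2+2\delta}$, note that $i = O_\delta(\log n)$, choose $M > 2C\log 2 + 1$ so that $2^{i+1}\le n^{M-1}$ makes the hypothesis $|\calF|\ge \frac1n u!$ sufficient, and apply $B_i$ of \Cref{lem:iterative}, using $(1-\delta)^{i+1}u\le n^{1/2+2\delta}$ and $10n^{1/2-\delta}\le n$ to finish. You also state explicitly that $M$ depends only on $\delta$, so $n_0=n_0(\delta)$; the paper leaves this quantifier order implicit.
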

\subsection{Accurate Spread Approximation}
\label{subsec:accurate}
In this section, 
we complete the proof of \Cref{thm:accurate_spread_approx}.
First, we extract a consequence of \Cref{cor:strong_density_boost}.
\begin{corollary}
\label{cor:spreadness}
Let $\eps\in \left(0,\tfrac 12\right)$.
Then there exists $n_0=n_0(\eps)$ such that 
for every $n>n_0$, $n^{0.99} < t < n-n^{1/2+\eps}$,
and $t$-intersecting family $\calF\subseteq\Sigma_n$ 
with $|\calF| \geq \frac 1n\cdot u!$,
there exists $Y\subseteq [n]^2$ such that 
$|Y| < t+0.001\,u$ and $\calF(Y)$ is $0.1u$-spread.
\end{corollary}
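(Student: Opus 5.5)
Apply \Cref{cor:strong_density_boost} with $\delta=\eps/2$. This gives a set $X\subseteq[n]^2$ with $|X| = t-n^{1/2}$ and
$$|\calF(X)|\geq n^{-n^{1/2+\eps}}|\calF| \geq n^{-n^{1/2+\eps}}\cdot\frac 1n\, u!.$$
Then use \Cref{obs:finding_spread} on the family $\calG=\calF(X)$ with spread parameter $r=0.1u$. Take a set $Z$, maximal under inclusion among subsets of $[n]^2\setminus X$, satisfying $|\calG(Z)|\geq r^{-|Z|}|\calG|$; $Z=\emptyset$ qualifies, so such a $Z$ exists. Set $Y=X\cup Z$. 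Then $\calF(Y)=\calG(Z)$ is $0.1u$-spread. It remains to check $|Y|<t+0.001u$, i.e.\ $|Z| < 0.001u + n^{1/2}$. Note that if $\calF(X)$ is empty the statement fails, but it is nonempty by the lower bound above.

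For the size bound, suppose $|Z| = d \geq 0.001u+n^{1/2}$. Every element of $\calF[Y]$ is a permutation containing $Y$, so $|\calF(Y)|\leq (n-|Y|)! = (u+n^{1/2}-d)!$. Combining this with the defining inequality of $Z$ and the density boost gives
$$\frac{u!}{n\cdot n^{n^{1/2+\eps}}} \leq |\calF(X)| \leq r^{d}\,(u+n^{1/2}-d)!.$$
We have $d-n^{1/2}\geq 0.001u>0$. By \Cref{lem:factorial_bound},
$$(u+n^{1/2}-d)! \leq \frac{u!}{(u/e)^{d-n^{1/2}}}.$$
So the right-hand side is at most
$$(0.1u)^{d}\,(e/u)^{d-n^{1/2}}\, u! = (0.1e)^{d}\, u^{n^{1/2}}\, e^{-n^{1/2}}\cdot u! \leq (0.1e)^{d}\, n^{n^{1/2}}\, u!.$$
Since $0.1e<0.3$ and $d\geq 0.001u>0.001n^{1/2+\eps}$, we get $(0.1e)^d\leq e^{-c\,n^{1/2+\eps}}$ for an absolute $c>0$. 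The resulting chain reads
$$n^{-1-n^{1/2+\eps}} \leq e^{-c n^{1/2+\eps}}\, n^{n^{1/2}},$$
and we need this to be false.

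This exponent comparison is the main obstacle. The loss $n^{-n^{1/2+\eps}}$ from the density boost has size $\exp(-n^{1/2+\eps}\log n)$, which beats the gain $e^{-cn^{1/2+\eps}}$ by a $\log n$ factor. So with the parameters as stated the chain is not contradictory. To fix this, I would apply \Cref{cor:strong_density_boost} with a smaller $\delta$, say $\delta=\eps/4$. The loss then becomes $n^{-n^{1/2+\eps/2}}=\exp(-n^{1/2+\eps/2}\log n)$. The hypothesis $t<n-n^{1/2+\eps}$ implies $t<n-n^{1/2+\eps/2}$, so that corollary still applies. On the other side, $d\geq 0.001u\geq 0.001n^{1/2+\eps}$ still gives the gain $e^{-c n^{1/2+\eps}}$. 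Since $n^{1/2+\eps}\gg n^{1/2+\eps/2}\log n + n^{1/2}\log n$, the inequality fails for large $n$, a contradiction. Hence $|Y|<t+0.001u$, and $n_0$ depends only on $\eps$.
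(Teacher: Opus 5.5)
Your argument, once you switch to $\delta=\eps/4$, is the paper's proof. It applies \Cref{cor:strong_density_boost}, extends $X$ maximally via \Cref{obs:finding_spread} with parameter $0.1u$, and rules out $|Y|\ge t+0.001u$ by bounding $|\calF(Y)|\le(n-|Y|)!$ with \Cref{lem:factorial_bound}, where the gain $(10/e)^{-\Omega(u)}$ beats the loss $n^{-O(n^{1/2+\eps/2})}$. You are right that $\delta=\eps/2$ is too weak because of the $\log n$ factor, so your write-up should simply start with $\delta=\eps/4$, exactly as the paper does.
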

\begin{proof}
Take $X$ as in \Cref{cor:strong_density_boost}
with $\delta = \frac{\eps}4$,
so $|X| = t-n^{1/2 }$.
We again use \Cref{obs:finding_spread},
take maximal $Y\supseteq X$ such that 
\begin{equation}
\label{eq:spreadness_2}
    |\calF(Y)| \geq (0.1u)^{-|Y|+|X|}|\calF(X)|,
\end{equation}
so $|\calF(Y)|$ is $0.1u$-spread.
Now, we bound the size: assume that $|Y| > t+0.001u$.
Let $d = |Y|-t$.
\begin{align*}
(u-d)! = (n-|Y|)! &\geq |\Sigma_n(Y)| \\
&\geq 
|\calF(Y)|  \\
&\geq (0.1u)^{-(d+n^{1/2})}\, |\calF(X)| 
\tag{from \eqref{eq:spreadness_2}}\\
&\geq (0.1u)^{-(d+n^{1/2})}\, n^{-n^{(1+\eps)/2}}|\calF| 
\tag{from \eqref{eq:density_boost}}\\
&\geq \frac 1n (0.1u)^{-(d+n^{1/2})}\, n^{-n^{(1+\eps)/2}}
\cdot u! \tag{since $|\calF|\geq \frac 1n u!$}\\
&\geq \frac 1n (0.1u)^{-d}\,n^{-n^{1/2}}\, n^{-n^{(1+\eps)/2}}
\cdot u! \tag{since $u\leq n$} \\
&\geq (0.1u)^{-d}\, n^{-2n^{(1+\eps)/2}}\cdot u!
\tag{since $n^{1/2} < n^{(1+\eps)/2}$}
\end{align*}
Thus, we deduce that (using \Cref{lem:factorial_bound})
\begin{align*}
(0.1u)^d n^{2n^{(1+\eps)/2}}
&\geq \frac{u!}{(u-d)!} 
\geq \left(\frac ue\right)^d,
\end{align*}
which implies that 
$$n^{2n^{(1+\eps)/2}+1} \geq 
\left(\frac{10}e\right)^d
\geq \left(\frac{10}e\right)^{0.001u},$$
which is a contradiction for all sufficiently 
large $n$ because $2n^{(1+\eps)/2}\log n \ll n^{1/2+\eps} < u$.
\end{proof}

We are now ready to prove \Cref{thm:accurate_spread_approx}.
The proof uses spread approximation similar to 
proving the statements $A_i$ in \Cref{lem:iterative}.
For reader's convenience, we reproduce the theorem statement below.
\spreadapprox*
\begin{proof}[Proof of \Cref{thm:accurate_spread_approx}]
Let $q=t+0.001u$.
Initiate $\calF_1=\calF$. For each $j\geq 1$, do the following.
\begin{enumerate}
\item If $|\calF_j| < \frac 1n\cdot u!$, then terminate.
\item Take a maximal (under inclusion) $S_j\subseteq [n]^2$
such that $|S_j| < q$
and $\calF_j(S_j)$ is $0.1u$-spread.
Such a set exists by \Cref{cor:spreadness}.
\item Let $\calF_{j+1} = \calF_j\setminus\calF_j[S_j]$.
\end{enumerate}
The algorithm terminates when $|\calF_j| < \frac 1n \cdot u!$.
Suppose this happens when $j=N+1$.
We take $\calS=\{S_1,\dots,S_N\}$. 
We claim that this works. Note that 
(a) and (b) are clearly satisfied 
(for (b), note that $\calF\setminus \calF[\calS]\subseteq \calF_{N+1}$).
We now verify (c).
Assume for the sake of contradiction 
that $|S_i\cap S_j| < t$ for some $i,j$. 
From Step (3), $\calF_i(S_i)$ and $\calF_j(S_j)$
are both $0.1u$-spread. 
Let $x=t-|S_i\cap S_j|$ and define 
\begin{align*}
\calG_i &= \left\{X\in\calF_i(S_i) : |X\cap (S_j\setminus S_i)| \leq 
\left\lfloor \tfrac{x-1}2\right\rfloor\right\}
\quad{\text{and}} \\
\calG_j &= \left\{X\in\calF_j(S_j) : |X\cap (S_i\setminus S_j)| 
\leq\left\lfloor \tfrac{x-1}2\right\rfloor\right\}
\end{align*}
We bound the size of $\calG_i$. Note that an element of $\calF_i(S_i)\setminus\calG_i$
must contain a subset of size 
$\left\lfloor \tfrac{x-1}2\right\rfloor 
+ 1 = \left\lceil \tfrac x2\right\rceil$ 
of $S_j\setminus S_i$.
By taking union bound across all such subsets
and using spreadness of $\calF_i(S_i)$, we get that
\begin{align*}
|\calF_i(S_i)\setminus\calG_i| &\leq 
\binom{|S_j\setminus S_i|}{\lceil x/2\rceil}
 (0.1u)^{-\lceil x/2\rceil} |\calF_i(S_i)| \\
&\leq \binom{x+0.001u}{\lceil x/2\rceil} 
\tag{since $|S_j\setminus S_i| = |S_j|-|S_i\cap S_j| 
\leq (t+0.001u)-(t-x)$}
(0.1u)^{-\lceil x/2\rceil} |\calF_i(S_i)| \\
&\leq \left(\frac{e(x+0.001u)}{\lceil x/2\rceil}
\right)^{\lceil x/2\rceil}
(0.1u)^{-\lceil x/2\rceil} |\calF_i(S_i)| 
\tag{using \Cref{lem:binom_bound}}\\
&\leq \left(\frac{e(x+0.001u)}{0.1\cdot u\cdot\frac x2}
\right)^{\lceil x/2\rceil} |\calF_i(S_i)| \\
&\leq \left(\frac {20e}{u}+\frac {0.02e}x\right)
^{\lceil x/2\rceil}
|\calF_i(S_i)| \\ 
&\leq \tfrac 12 |\calF_i(S_i)|,
\end{align*}
where the last inequality follows from 
$\frac{20e}u < 0.1$ and $\frac{0.02e}x < 0.1$ (since $x\geq 1$).

Thus, $|\calG_i|\geq \tfrac 12|\calF_i(S_i)|$,
and so $\calG_i$ is $0.05u$-spread. 
Similarly, $\calG_j$ is $0.05u$-spread. 
Now, by \Cref{cor:spread_lemma},
(since $0.05u>2^{12}\log_2(2n)$ for all sufficiently large $n$),
there exists $G_i\in\calG_i$ 
and $G_j\in\calG_j$ such that $G_i\cap G_j=\emptyset$.
Note that $G_i\cup S_i\in \calF_i$ 
and $G_j\cup S_j\in\calF_j$ and 
\begin{align*}
|(G_i\cup S_i) \cap (G_j\cup S_j)| 
&\leq |S_i\cap S_j| + |G_i\cap S_j| + |G_j\cap S_i|  \\
&= |S_i\cap S_j| + |G_i\cap (S_j\setminus S_i)| 
+ |G_j\cap (S_i\setminus S_j)|  \\
&\leq (t-x) + \tfrac{x-1}2 + \tfrac{x-1}2 < t,
\end{align*}
contradicting the $t$-intersecting property of $\calF$.
Thus, $\calS$ is $t$-intersecting.
\end{proof}
\section{Peeling and Rough Bound on
\texorpdfstring{$|\calW_k|$}{|W\_k|}}
\label{sec:peeling}
In this and the next section, we will work exclusively 
with $\calS$ found by \Cref{thm:accurate_spread_approx}.
In particular, let $\calS\subseteq \binom{[n]^2}{\leq q}$ 
be a $t$-intersecting family (so $q$ would be $t+0.001u$
if $\calS$ comes from applying \Cref{thm:accurate_spread_approx}).

To analyze the structure of $\calS$ further,
we introduce the construction known as \vocab{peeling},
which was first introduced by Kupavskii and Zakharov in \cite{spread_approx},
refined in \cite{partitions}, and used more extensively in 
\cite{one_minus_eps}.
Most material in this section comes from 
\cite[\S 2.4]{one_minus_eps}, but we will need more refined bounds 
than that paper.
We will give all proofs, making our paper self-contained.

Roughly speaking, we alternate between removing 
partial permutations with the highest number of elements 
from $\calS$ and simplifying $\calS$ 
(while keeping it still $t$-intersecting).

\subsection{Peeling: Definitions and Basic Properties}
\label{subsec:peeling}
\begin{definition}
Given a $t$-intersecting family $\calS\subseteq\binom{[n]^2}{\leq q}$,
its \vocab{simplification} $\calS'$ is obtained by 
repeating the following operations until no further application 
of either operation is possible.
\begin{itemize}
\item Remove any element $T_1\in\calS$ that is contained 
in another element $T_2\in\calS$,
if any such $T_1$ exists.
\item For each $S\in\calS$ and $X\subsetneq S$,
replace $S$ with $X$ if it would still result in a 
$t$-intersecting family.
\end{itemize}
The simplification $\calS'$ has the following properties:
\begin{enumerate}[label=(\alph*)]
\item $\calU[\calS] \subseteq \calU[\calS']$
for any $\calU\subseteq 2^{[n]^2}$.
\item There are no $T_1,T_2\in\calS'$ such that 
$T_1\subsetneq T_2$.
\item \label{item:maximality}
For any $S\in\calS'$ and $X\subsetneq S$,
there exists $T\in\calS'$ such that $|X\cap T| \leq t-1$.
\end{enumerate}
\end{definition}

We iteratively define the following families,
which will be used for the entire paper:
\begin{itemize}
\item Let $\mathvocab{\calT_{q-t}}$ be the simplification of $\calS$.
\item For each $k=q-t,q-t-1,\dots,1$, we let 
$\mathvocab{\calW_k} 
:= \calT_k\cap\binom{[n]^2}{t+k}$
and $\mathvocab{\calT_{k-1}}$ be the simplification of $\calT_k\setminus \calW_k$.
\end{itemize}

The basic properties of peeling are summarized 
in the following proposition.
Part (c) and (d) in particular are the most crucial properties 
that we will use repeatedly.
\begin{proposition}[{\cite[Lem.~9]{one_minus_eps}, 
    \cite[Lem.~16]{partitions}}] \hspace{0pt}
\label{prop:peeling_properties}
For any positive integer $k\leq q-t$, all of the following hold.
\begin{enumerate}[label=(\alph*)]
\item All sets in $\calW_k$ have size exactly $t+k$.
All sets in $\calT_k$ have size at most $t+k$.
\item For all $\calF\subseteq 2^{[n]^2}$, we have 
$\calF[\calT_k] = \calF[\calT_{k-1}]\cup \calF[\calW_k]$.
\item There does not exist a subset $X\subseteq [n]^2$
and a subfamily $\calY\subseteq \calT_k(X)$ 
such that $|\calY|>1$ and $\calY$ is
$\alpha$-spread for some $\alpha>k$.
\item For all $X\subseteq [n]^2$, we have 
$|\calW_k(X)| \leq k^{t+k-|X|}$.
\end{enumerate}
\end{proposition}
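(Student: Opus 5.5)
Parts (a) and (b) follow from the construction; the real work is in (c), and (d) will be derived from (c).

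\emph{Part (a).} Sizes never increase under simplification, since simplification only deletes sets or replaces a set by a subset. Downward induction on $k$ then works. Every set of $\calT_{q-t}$ has size at most $q$. If every set of $\calT_k$ has size at most $t+k$, then removing $\calW_k$ (the sets of size exactly $t+k$) leaves sets of size at most $t+k-1$. Simplifying keeps this, so every set of $\calT_{k-1}$ has size at most $t+k-1$.

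\emph{Part (b).} By property (a) of simplification,
\[
\calF[\calT_k\setminus\calW_k]\subseteq \calF[\calT_{k-1}].
\]
Since $\calF[\calT_k]=\calF[\calT_k\setminus\calW_k]\cup\calF[\calW_k]$, this gives one inclusion. For the reverse inclusion, I would check the two operations directly. Deleting a contained set only enlarges the sets to be covered, and replacing $S$ by a subset $X$ only enlarges $\calF[\cdot]$. So the inclusion $\calF[\calT_{k-1}]\supseteq\calF[\calT_k\setminus\calW_k]$ is the content of the statement. If equality is required, I would state it for the upward-closed covering relation that the paper uses later.

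\emph{Part (c): main step.} Suppose $X$ is given and $\calY\subseteq\calT_k(X)$ satisfies $|\calY|\ge 2$ and is $\alpha$-spread with $\alpha>k$.
\begin{itemize}
\item First I show $X\notin\calT_k$ (so $\emptyset\notin\calY$). If $X$ were itself in $\calT_k$, no other member of $\calT_k$ could contain it, by property (b) of simplification; that would force $|\calY|\le 1$. Hence $X$ is a proper subset of every $S\in\calT_k[X]$.
\item By maximality property (c), for $S=Y_0\cup X$ with $Y_0\in\calY$ there is some $T\in\calT_k$ with $|X\cap T|\le t-1$.
\item Since $\calT_k$ is $t$-intersecting, every $Y\in\calY$ satisfies $|(Y\cup X)\cap T|\ge t$. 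So $Y$ must meet $T\setminus X$ in at least one element, and $|T\setminus X|\le |T|\le t+k$.
\item A crude union bound over elements $z\in T\setminus X$ gives $|\calY|\le (t+k)\alpha^{-1}|\calY|$, which is useless. I need to exploit the deficit $t-|X\cap T|$.
\end{itemize}
To do this, I would choose $X$ maximal in the sense of property (c): consider the sets $X\cup\{z\}$ and iterate. My first attempt is an induction on $|S\setminus X|$. Take the largest $X'\supseteq X$ with $X'\subsetneq S$ such that $X'$ still intersects every member of $\calT_k$ in at least $t$ elements. Then property (c) applies to $X'$ and produces $T$ with $|X'\cap T|=t-1$ exactly. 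For each $Y$, we then need $Y\cup X$ to meet $T$ in at least one extra element outside $X'$. The number of such elements is at most $|T\setminus X'|\le k+1$. I expect this counting to give $|\calY|\le (k/\alpha)|\calY|$ with roughly $k$ choices, which is a contradiction since $\alpha>k$. Getting the count down to exactly $k$, rather than $t+k$, is the main obstacle. I would try to arrange that $T\cap(S\setminus X')$ is already used up, so that only $|T|-t+1-1\le k$ candidate elements remain.

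\emph{Part (d).} Given (c), I would iterate on $\calW_k(X)\subseteq\calT_k(X)$. If $|\calW_k(X)|>k^{t+k-|X|}$, then by \Cref{obs:finding_spread} with $r=k$ (strictly, $r$ slightly larger than $k$, which is fine by integrality) there is a maximal $Z$ with
\[
|\calW_k(X\cup Z)|\ge k^{-|Z|}|\calW_k(X)| > k^{t+k-|X|-|Z|}\ge 1,
\]
so that $\calW_k(X\cup Z)$ is spread with parameter above $k$ and has more than one element. This contradicts (c) applied at the set $X\cup Z$. The case $|X\cup Z|=t+k$ forces a single element, so that case is also handled by the strict inequality.
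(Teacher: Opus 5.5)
\textbf{The gap is in (c).} Your (a), (b) and your reduction of (d) to (c) are essentially the paper's arguments. In (b) you are right that simplification only yields $\calF[\calT_k]\subseteq\calF[\calT_{k-1}]\cup\calF[\calW_k]$, and that inclusion is all the paper uses later. In (c), however, you never reach a contradiction, and the $X'$ device cannot supply one.
\begin{itemize}
\item As stated, $X'$ does not exist. By property (c) of simplification, \emph{every} proper subset of $S$ meets some member of $\calT_k$ in at most $t-1$ elements.
\item If you instead take $X'\subsetneq S$ with $|X'|=|S|-1$ to force a deficit of exactly one, you have changed the link. The sets $Y\cup X$ with $Y\in\calY$ contain $X$ but need not contain $X'$. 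So only $|Y\cap(T\setminus X)|\ge t-|X\cap T|$ is forced, and $|X\cap T|$ may be far below $|X'\cap T|=t-1$.
\end{itemize}
The missing idea, which is the paper's, is to keep $X$ and spend the whole deficit at once. Set $z=|X\cap T|$ and $m:=t-z\ge 1$. Then every $Y\in\calY$ contains some $U\in\binom{T\setminus X}{m}$. Spreadness at level $|U|=m$ plus a union bound give
\[
|\calY|\le\binom{|T\setminus X|}{m}\alpha^{-m}|\calY|\le\binom{k+m}{m}\alpha^{-m}|\calY|\le\left(\frac{k+1}{\alpha}\right)^{m}|\calY|.
\]
Here we used $|T\setminus X|\le t+k-z=k+m$ and $\binom{k+m}{m}=\prod_{i=1}^{m}\frac{k+i}{i}\le(k+1)^m$.

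\textbf{The threshold $\alpha>k$ cannot be reached.} The bound above gives a contradiction only for $\alpha>k+1$. The obstacle you flag (getting $k$ rather than $k+1$ choices) is insurmountable, because (c) and (d) are false as stated. Here is a counterexample.
\begin{itemize}
\item Let $C,D$ be disjoint sets of diagonal pairs $(i,i)$ with $|C|=t-1$, $|D|=2k+1$ and $t+2k\le n$. Let $\calS=\{C\cup E:E\in\binom{D}{k+1}\}$ and $q=t+k$.
\item $\calS$ is $t$-intersecting. It is already simplified: deleting an element of $C$, resp.\ of $E$, from $C\cup E$ leaves a set meeting some $C\cup E'$ in only $t-1$ elements. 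Hence $\calT_k=\calW_k=\calS$.
\item For $X=C\cup F$ with $F\in\binom{D}{k}$, the link $\calT_k(X)$ consists of the $k+1$ singletons of $D\setminus F$. This family is $(k+1)$-spread, has more than one member, and $|\calW_k(X)|=k+1>k=k^{t+k-|X|}$.
\end{itemize}
The paper's proof reaches $k$ only by bounding $|S\setminus X|$ by $t+k-z-1$ (its $S$ is your $T$). That bound is unjustified: $|S|\le t+k$ gives only $t+k-z$. The correct statements are (c) for $\alpha>k+1$ and (d) with $(k+1)^{t+k-|X|}$. Your count $|T\setminus X'|\le k+1$ was pointing at exactly this. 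The weakening only costs constants in the later applications.
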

\begin{proof}
Part (a) follows directly from the definition of 
$\calW_k$ and $\calT_k$.
To prove (b), note that by property (a), simplification,
we have that 
$$\calF[\calT_k] = \calF[\calT_k\setminus \calW_k] 
\cup \calF[\calW_k] = \calF[\calT_{k-1}]\cup \calF[W_k]$$
We now prove (c). Assume that there exists such a subset $X$.
If $X$ is not contained in some $S\in\calT_k$, 
then $\calT_k(X)$ is empty, so we are done.
Thus, $X\subseteq S$ for some $S\in\calT_k$.
If $X=S$, then $|\calY|\leq |\calT_k(X)| = 1$,
which is a contradiction as well.
Thus, $X\subsetneq S$, so
by the property \ref{item:maximality} of simplification, 
there exists $S\in\calT_k$ 
such that $|X\cap S| = z < t$. 
For any $R\in\calY$, we have $|R\cap S|\geq t$,
which implies that $|R\cap (S\setminus X)|\geq t-z$.
Therefore, $R$ contains a subset in $\binom{S\setminus X}{t-z}$,
which implies that 
\begin{align*}
    |\calY[X]| 
    &\leq \sum_{U\in \binom{S\setminus X}{t-z}} 
    |\calY[X\cup U]| \\
    &\leq \binom{|S\setminus X|}{t-z}
    \alpha^{-(t-z)} |\calY[X]| 
    \tag{$\alpha$-spreadness of $\calY(X)$} \\
    &\leq \binom{t+k-z-1}{t-z} \alpha^{-(t-z)} |\calY[X]|.
\end{align*}
However, observe that 
$$\binom{t+k-z-1}{t-z} = \frac{k}{1}\cdot  \frac{k+1}2\cdot\frac{k+2}3 
\cdots \frac{k+t-z-1}{t-z} < k^{t-z},$$
which gives a contradiction.

Finally, to prove (d), we assume that $|\calW_k(X)| = \alpha^{t+k-|X|}$,
where $\alpha>k$.
Utilizing \Cref{obs:finding_spread},
we select maximal set $Y\supseteq X$ such that 
$|\calW_k(Y)|\geq  \alpha^{|X|-|Y|}  |\calW_k(X)|$.
Then $\calW_k(Y)$ is $\alpha$-spread.
Furthermore,
$$|\calW_k(Y)| \geq \alpha^{|X|-|Y|}|\calW_k(X)| 
> \alpha^{t+k-|Y|} \geq 1,$$
implying that $|\calW_k(Y)| > 1$.
Thus, $\calY=\calW_k(Y)$ violates (c).
\end{proof}

\subsection{Rough Bound on \texorpdfstring{\boldmath$|\calW_k|$}{|W\_k|}}
We now use \Cref{prop:peeling_properties} 
to give a rough bound on $|\calW_k|$ that works for any $k$.
The bound in this section will be used for 
larger $k$ (more precisely, $k>t^{2/7-\eps/2}$).
We call this rough because in \Cref{sec:bounding},
we will prove a stronger estimate that works 
for smaller $k$.

\begin{proposition}
\label{prop:bound_W_k}
For any positive integer $k\leq q-t$, we have 
\begin{equation}
\label{eq:bound_W_k}    
|\calW_k| \leq \sum_{j=0}^k \binom tj \binom kj^2 k^{k-j}.
\end{equation}
\end{proposition}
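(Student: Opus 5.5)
The plan is to fix one element $A\in\calW_k$ and classify every $B\in\calW_k$ by its overlap with $A$. We may assume $\calW_k\neq\emptyset$, since otherwise there is nothing to prove. Every set in $\calW_k$ has size $t+k$, and $\calW_k\subseteq\calT_k$ is $t$-intersecting, so $|A\cap B|\geq t$ for all $B\in\calW_k$. Write $|A\cap B| = t+k-j'$ with $0\le j'\le k$, so $B$ has exactly $j'$ elements outside $A$ and misses exactly $j'$ elements of $A$. The index $j$ in \eqref{eq:bound_W_k} should be a refined version of this, and the bound will come from summing over choices of $B\cap A$, followed by an application of \Cref{prop:peeling_properties}(d) to count the completions.

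A crude first count picks the $j'$ elements of $A$ that $B$ omits ($\binom{t+k}{j'}$ ways) and sets $X=B\cap A$. Then \Cref{prop:peeling_properties}(d) gives $|\calW_k(X)|\le k^{t+k-|X|}=k^{j'}$, which yields $\sum_{j'}\binom{t+k}{j'}k^{j'}$. This has the wrong shape, so we need to split $A$ more carefully. Take a second set $B_0\in\calW_k$ with $|A\cap B_0|=t$ exactly; if no such set exists, the intersections are all $\ge t+1$, and we handle that case by the same argument with better parameters, or by using \Cref{prop:peeling_properties}(c) to force such a pair. Then $A\cup B_0$ splits into the common core $C=A\cap B_0$ of size $t$, together with $A\setminus B_0$ and $B_0\setminus A$, each of size $k$.

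For each $B\in\calW_k$, let $j$ be the number of elements of $C$ that $B$ misses, so there are $\binom tj$ choices of $B\cap C$. Since $|B\cap A|\ge t$, $B$ must contain at least $j$ elements of $A\setminus B_0$, and likewise at least $j$ elements of $B_0\setminus A$. Choose $j$ such elements from each side, giving $\binom kj^2$ choices. This determines $X\subseteq B$ with $|X|=(t-j)+2j=t+j$, and \Cref{prop:peeling_properties}(d) bounds the completions by $k^{t+k-|X|}=k^{k-j}$. Summing over $0\le j\le k$ gives exactly $\sum_j\binom tj\binom kj^2k^{k-j}$. We may overcount, since several choices of $X$ can lie in the same $B$, but that is harmless for an upper bound.

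The main obstacle is guaranteeing a pair $A,B_0\in\calW_k$ with intersection exactly $t$. If all pairwise intersections are at least $t+1$, I would instead take $C=A\cap B_0$ of size $t+s$ with $s\geq 1$ minimal, and check that the analogous count with $|C|=t+s$ and side parts of size $k-s$ is dominated termwise by the bound. If $|\calW_k|\le1$, the bound is trivial because the $j=0$ term equals $k^k\ge1$. I would also verify that the $j$ chosen elements on each side exist, which follows from $|B\cap A|\ge t$ and $|B\cap B_0|\ge t$.
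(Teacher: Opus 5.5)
Your counting is the paper's, but you leave open the step you yourself flag as the main obstacle: producing a partner $B_0$ with $|A\cap B_0|=t$. The fix is to stop insisting that $B_0\in\calW_k$.

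Your count uses only two properties of $B_0$: that $|B_0\setminus A|\le k$, and that $|B\cap B_0|\ge t$ for every $B\in\calW_k$. Both hold for any $B_0\in\calT_k$ with $|A\cap B_0|=t$, because members of $\calT_k$ have size at most $t+k$ and $\calT_k$ is $t$-intersecting.

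Such a partner always exists by the maximality property~\ref{item:maximality} of simplification. This, not \Cref{prop:peeling_properties}(c), is the relevant tool, and it gives a partner in $\calT_k$, not necessarily in $\calW_k$. Apply it to $S=A$ and $X=A\setminus\{x\}$ for some $x\in A$ to get $T\in\calT_k$ with $|X\cap T|\le t-1$; then $T\ne A$ and $|A\cap T|=t$. With $C=A\cap T$ your argument goes through verbatim, using $\binom{|T|-t}{j}\le\binom kj$. This is precisely the paper's proof, which picks the tight pair in $\calT_k$.

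Your fallback can be made to work, but not as stated. Minimality of $s$ gives $|B\cap A|,|B\cap B_0|\ge t+s$ for all $B\in\calW_k$, so the count is $\sum_j\binom{t+s}{j}\binom{k-s}{j}^2k^{k-s-j}$. For $j\le t$ the ratio $\binom{t+s}{j}/\binom tj=\prod_{i=1}^{s}\frac{t+i}{t+i-j}\le(1+j)^s\le k^s$ compensates for the missing factor $k^{-s}$.

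However, the hypothesis $k\le q-t$ does not exclude $k\ge t+2$. In that case the indices $t<j\le\min(t+s,k-s)$ give positive terms where the corresponding terms of \eqref{eq:bound_W_k} vanish. Termwise domination fails there, and those terms would have to be absorbed into the $j=t$ term by a separate estimate.

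You carry out neither this check nor the alternative via \Cref{prop:peeling_properties}(c). So the existence step remains a small but genuine gap in the proposal, easily repaired by the choice $B_0\in\calT_k$ above.
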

\begin{proof}
We pick two sets $A,B\in\calT_k$ such that $|A\cap B|=t$,
which must exist since $\calT_k$ is simplified 
(if $|A\cap B|\geq t+1$ for all $A,B\in\calT_k$, 
then removing an element $x\in [n]^2$
from every set $T\in \calT_k$
will still make $\calT_k$ $t$-intersecting,
contradicting the fact that $\calT_k$ is simplified).

Let $I = A\cap B$. For any $C\in\calW_k$,
let $C_0=C\cap I$, $C_1=C\cap (A\setminus I)$, and
$C_2=C\cap (B\setminus I)$. 
See \Cref{fig:venn_diagram} for the diagram.

\begin{figure}[ht]
\begin{asy}
size(4cm,0);
settings.outformat="pdf";
defaultpen(fontsize(10pt));
draw(circle(dir(90),sqrt(3)));
filldraw(circle(dir(210),sqrt(3)), black+opacity(0.1), linewidth(1));
filldraw(circle(dir(330),sqrt(3)), black+opacity(0.1), linewidth(1));
label("$A$", (1+sqrt(3))*dir(210), dir(210));
label("$B$", (1+sqrt(3))*dir(330), dir(330));
label("$C$", (1+sqrt(3))*dir(90), dir(90));
label("$C_0$", (0,0));
label("$C_1$", 1.2*dir(150));
label("$C_2$", 1.2*dir(30));
\end{asy}
\caption{Venn diagram in the proof of \Cref{prop:bound_W_k}}
\label{fig:venn_diagram}
\end{figure}
If $|C_0| = t-j$, then $|C_1|\geq j$ and $|C_2|\geq j$
because $\calW_k$ is $t$-intersecting.
Let $C_1'$ and $C_2'$ be arbitrary $j$-element 
subsets of $C_1$ and $C_2$, respectively.
There are $\binom t{t-j} = \binom tj$ choices for $C_0$,
at most $\binom{|A|-t}{j}$ choices for $C_1'$,
and at most $\binom{|B|-t}{j}$ choices for $C_2'$.
Furthermore, the remaining component 
$X = C\setminus (C_0\cup C_1'\cup C_2')$
must be picked from $\calW_k(C_0\cup C_1'\cup C_2')$,
which by \Cref{prop:peeling_properties} (d),
has cardinality at most $k^{k-j}$.
Thus, we indeed have 
\[|\calW_k| \leq \sum_{j=0}^k \binom{t}{j} 
\binom{|A|-t}{j} \binom{|B|-t}j k^{k-j}
\leq \sum_{j=0}^k \binom tj \binom kj^2 k^{k-j}.\qedhere \]
\end{proof}
We estimate the right hand side of \eqref{eq:bound_W_k}
by looking for the term that gives the maximum value.
In particular, we find $j$ such that $f(j) := 
\binom tj \binom kj^2 k^{k-j}$ is maximum.
A straightforward computation gives 
\begin{equation}
\label{eq:solve_j}
\frac{f(j)}{f(j+1)} 
= \frac{\binom tj}{\binom t{j+1}}\cdot
\frac{\binom kj^2}{\binom k{j+1}^2}\cdot
k
= \frac{(j+1)^3 k}{(t-j)(k-j)^2},
\end{equation}
which is an increasing function. 
We check whether $\tfrac{f(j)}{f(j+1)}$ is greater 
or less than $1$ or not. 
The calculation results in the following claim.
\begin{proposition}
\label{prop:max_j}
Let $k\leq q-t$ be a nonnegative integer.
Let $j_0$ be the index $j\in [0,\min(t,k)]$ such that 
$f(j)=\binom tj \binom kj^2 k^{k-j}$ 
attains the maximum.
Assume $t>n^{0.99}$ and 
$k>k_0$ for some constant $k_0$.
\begin{enumerate}[label=(\alph*)]
\item If $k=o(t^{1/4})$, then $j_0=k$.
\item If $k\leq t^{1/2}$, then $j_0\geq \frac 12 k$.
\item If $k\geq t^{1/2}$, then $j_0 \geq \frac 12(tk)^{1/3}$.
\end{enumerate}
\end{proposition}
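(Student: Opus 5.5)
The plan rests on the ratio formula \eqref{eq:solve_j},
$$g(j) := \frac{f(j)}{f(j+1)} = \frac{(j+1)^3 k}{(t-j)(k-j)^2}.$$
This is increasing in $j$ on $[0,\min(t,k)-1]$, since the numerator increases and the denominator decreases. Hence $f$ is unimodal. Its maximum $j_0$ is the smallest $j$ with $g(j)\geq 1$, or $j_0=\min(t,k)$ if no such $j$ exists. In particular, $j_0\geq J$ as soon as $g(J-1)<1$. Since $k\leq q-t\leq 0.001u<t$, we have $\min(t,k)=k$. So each part reduces to locating where $g$ crosses $1$.

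For (a), it suffices to show $g(k-1)<1$, which gives $j_0=k$. We have
$$g(k-1)=\frac{k^3\cdot k}{(t-k+1)\cdot 1}=\frac{k^4}{t-k+1}.$$
Since $k=o(t^{1/4})$ (so also $k\ll t$), this is $o(1)<1$ for $n$ large. Note also that $g(j)\leq g(k-1)<1$ for all smaller $j$.

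For (b), take $j=\lceil k/2\rceil-1$, so $j+1\leq k/2+1$ and $k-j\geq k/2$. Then
$$g(j)\leq \frac{(k/2+1)^3 k}{(t-k)(k/2)^2}\approx \frac{k^2}{2t}\,(1+O(1/k)),$$
using $t-k=(1-o(1))t$. With $k\leq t^{1/2}$ this is at most about $\tfrac12(1+O(1/k))<1$ once $k>k_0$. Hence $j_0\geq \lceil k/2\rceil\geq k/2$.

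For (c), take $j$ with $j+1\approx \tfrac12 (tk)^{1/3}$. We first check that this $j$ lies in range and that $k-j\gtrsim k$: since $k\geq t^{1/2}$ we have $(tk)^{1/3}\leq k$, because this is equivalent to $t\leq k^2$. So $j\leq k/2$ and $k-j\geq k/2$. Then
$$g(j)\leq \frac{\tfrac18 tk\cdot k}{(1-o(1))t\,(k/2)^2}=\frac{1}{2}(1+o(1))<1,$$
which gives $j_0\geq j+1\geq \tfrac12(tk)^{1/3}$.

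The main obstacle is bookkeeping with constants. The factor-$2$ loss from $(k-j)^2\geq k^2/4$ must still leave $g(j)<1$. In (b) the $\tfrac12$ margin saves us. In (c) the cube of $\tfrac12$ gives $\tfrac18\cdot 4=\tfrac12$, and the integer rounding of $j$ is absorbed by $k>k_0$ (note $(tk)^{1/3}\geq t^{1/2}$ is large). The remaining routine check is $t-j\geq (1-o(1))t$, which holds since $j\leq k\leq 0.001u\ll t$ is not needed exactly; rather, $k\leq q-t<t/2$ suffices once constants are adjusted slightly, for instance by using $j+1\leq 0.49(tk)^{1/3}$ or by noting that $k=O(u)$ while $t>n^{0.99}$.
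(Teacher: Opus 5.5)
Your proposal is correct and is essentially the paper's proof. Both arguments evaluate the ratio \eqref{eq:solve_j} at $k-1$, near $k/2$, and near $\frac12(tk)^{1/3}$, using $k-j\ge k/2$ and $t-j=(1-o(1))t$; you use the monotonicity of the ratio directly, while the paper phrases (b) and (c) as a contradiction with $f(j_0)\ge f(j_0+1)$.

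One small slip: $0.001u<t$ and $q-t<t/2$ can fail when $t$ is near $n^{0.99}$, but you never need them. In (a) and (b), $k<t$ is automatic, and in (c) the paper's observation $k\le n\ll t^2$ gives $(tk)^{1/3}=o(t)$, which is all your estimate uses.
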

\begin{proof}
\begin{enumerate}[label=(\alph*)]
\item  If $k=o(t^{1/4})$, then 
$$\frac{f(k-1)}{f(k)} = \frac{k^4}{t-k+1} \ll 1,$$
so we have that $f$ is increasing,
implying that it is maximized at $k$.
\item Assume that $j_0\leq \frac k2$. Then 
$t-j_0 \geq 0.99k^2$ and $k-j_0 \geq \frac k2$, so 
$$\frac{f(j_0)}{f(j_0+1)} \leq 
\frac{\left(k/2+1\right)^3\cdot  k}{0.99 k^2 
\cdot \left(k/2\right)^2} < 0.6,$$
if $k$ is sufficiently large.
This contradicts $f(j_0) \geq f(j_0+1)$.
\item Assume that $j_0 \leq \frac 12 (tk)^{1/3}$.
Then since $k\geq t^{1/2}$, we have 
$t\leq k^2$, so
$j_0 \leq \frac 12 k$. In particular, $k-j_0\geq \frac 12k$.
Moreover, since $k\leq q-t\leq n\ll t^2$
(this uses $t>n^{0.99}$), we have 
$j_0\leq \frac 12(tk)^{1/3} 
< 0.01t$, so $t-j_0\geq 0.99t$. Using these two inequalities gives
$$\frac{f(j_0)}{f(j_0+1)} \leq 
\frac{\left(\frac 18 tk\right) \cdot k}{0.99 t \cdot (\frac 12 k)^2}
< 0.9$$
if $k$ is sufficiently large.
This contradicts $f(j_0)\geq f(j_0+1)$.
\qedhere
\end{enumerate}
\end{proof}
\begin{corollary}
\label{cor:bound_W_k}
Assume that $t>n^{0.99}$ and $k>k_0$ for some constant $k_0$. 
Then we have
$$|\calW_k| \leq \left(200\max\left(k,\frac tk\right)\right)^k.$$
\end{corollary}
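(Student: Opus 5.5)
Starting from \Cref{prop:bound_W_k}, we have $|\calW_k|\le \sum_{j=0}^k f(j)$ with $f(j)=\binom tj\binom kj^2k^{k-j}$. We bound the sum by $(k+1)\max_j f(j)$ and then bound $\max_j f(j)$ term by term with \Cref{lem:binom_bound}. Since $k+1\le 2^k$, this linear factor is absorbed into the constant, so it suffices to show $f(j)\le (100\max(k,t/k))^k$ for every $0\le j\le k$.

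For a fixed $j$, \Cref{lem:binom_bound} gives $\binom tj\le (et/j)^j$. For the middle factor we use the crude bound $\binom kj^2\le (2^k)^2=4^k$. Hence
\[
f(j)\le 4^k\left(\frac{et}{j}\right)^j k^{k-j}.
\]
We now view $g(j)=(et/j)^jk^{k-j}$ as a function of real $j\in(0,k]$ and maximize it. Its logarithm is $j\log(et/j)+(k-j)\log k$, with derivative $\log(t/j)-\log k$. So $g$ increases while $j<t/k$ and decreases afterwards.

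There are two cases.
\begin{itemize}
\item If $t/k\ge k$, then $g$ is increasing on $(0,k]$, so $g(j)\le g(k)=(et/k)^k$. This gives $f(j)\le (4e\,t/k)^k$.
\item If $t/k<k$, the maximum is at $j=t/k$, where
\[
g = e^{t/k}\,k^{t/k}\,k^{k-t/k}=e^{t/k}k^k\le (ek)^k,
\]
using $t/k\le k$. This gives $f(j)\le (4ek)^k$.
\end{itemize}
In both cases $f(j)\le (4e\max(k,t/k))^k$. Multiplying by $k+1\le 2^k$ yields $|\calW_k|\le (8e\max(k,t/k))^k\le (200\max(k,t/k))^k$.

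This argument does not need \Cref{prop:max_j}; that proposition only locates $j_0$ more precisely. The hypotheses $t>n^{0.99}$ and $k>k_0$ are not essential here, beyond guaranteeing $k\ge1$ and that $t\ge k$, so that $\binom tj$ is a genuine binomial for the relevant range. The only real step is optimizing $g$ over real $j$. The points needing care are the endpoint $j=0$, where $g=k^k$ is still within the bound, and making sure the continuous maximum dominates the integer values, which it clearly does.
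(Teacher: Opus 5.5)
Your proof is correct, and it reaches the bound by a different route from the paper's.

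Both arguments start from $|\calW_k|\le (k+1)\max_j f(j)$ and \Cref{lem:binom_bound}. The paper keeps the factor $\binom kj^2$ and bounds $f(j_0)\le (e^3tk^2/j_0^3)^{j_0}k^{k-j_0}$ at the maximizer $j_0$. It then needs the \emph{lower} bounds on $j_0$ from \Cref{prop:max_j}(b),(c) to control the denominator $j_0^3$. That is where the hypotheses enter:
\begin{itemize}
\item $t>n^{0.99}$ is used to ensure $j_0<0.01t$ in part (c);
\item $k>k_0$ is used in the ratio estimates and to absorb $k+1$ into $(200/199)^k$.
\end{itemize}

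You instead give up $\binom kj^2\le 4^k$, which costs only a constant in the base. You then maximize $(et/j)^jk^{k-j}$ over real $j\in(0,k]$. When $t/k<k$, the stationary point $j=t/k$ gives $e^{t/k}k^k\le (ek)^k$. Otherwise $g$ is monotone on $(0,k]$ and is bounded by $(et/k)^k$.

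This bypasses \Cref{prop:max_j} entirely and works for every $k\ge 1$ with no relation between $t$ and $n$. It also gives the better constant $8e$ instead of $200$. The paper's finer analysis tells you where the dominant term sits, e.g.\ $j_0=k$ when $k=o(t^{1/4})$. That information is not needed here, since the corollary only asks for a bound accurate up to $C^k$.

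One small correction to your closing remark: you do not need $t\ge k$ either. For $j>t$ we have $\binom tj=0$, so $\binom tj\le (et/j)^j$ holds trivially.
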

\begin{proof}
Note that $|\calW_k| \leq (k+1)f(j_0)$.
Using \Cref{lem:binom_bound},
we get that 
\begin{equation}
\label{eq:W_k_j_0}
|\calW_k| \leq (k+1)\binom t{j_0} \binom k{j_0}^2 k^{k-j_0} 
\leq (k+1)\left(\frac{e^3 tk^2}{j_0^3}\right)^{j_0} k^{k-j_0}.
\end{equation}
Now, we consider two cases.
\begin{itemize}
\item If $k\leq t^{1/2}$, then we apply 
\Cref{prop:max_j} (b) to get that $j_0\geq \frac k2$.
Plugging this in gives
\begin{align*}
|\calW_k| &\leq (k+1) \left(\frac{e^3 tk^2}{(k/2)^3}\right)^{j_0} 
k^{k-j_0} \\
&\leq (k+1)  \left(\frac{199 t}k\right)^{j_0}  
\cdot k^{k-j_0} 
\tag{since $8e^3 < 199$}\\
&\leq (k+1)  \left(\frac{199 t}k\right)^{j_0}  
\cdot \left(\frac tk\right)^{k-j_0} 
\tag{since $k^2\leq t$, so $k\leq\tfrac tk$}\\
&\leq (k+1) \left(\frac{199 t}k\right)^{k}  
\leq \left(\frac{200t}k\right)^k.
\end{align*}
\item If $k\geq t^{1/2}$, then we apply \Cref{prop:max_j} (c)
to get that $j_0^3 \geq \frac 18tk$.
Plugging this in gives 
$$|\calW_k| \leq (k+1) \left(\frac{8e^3 tk^2}{kt}\right)^{j_0}
k^{k-j_0} \leq (k+1) \cdot (199k)^{j_0} k^{k-j_0} 
\leq (200k)^k$$
(in the middle inequality, we use $8e^3 < 199$).
\qedhere
\end{itemize}
\end{proof}
In the main proof in \Cref{sec:conclusion},
we will apply \Cref{cor:bound_W_k} to bound the 
size of $\calW_k$ (and hence the size of 
$\calF[\calW_k]$) for each $k>t^{2/7-\eps/2}$.
In particular, this range of $k$ gives 
the bound of very roughly 
$|\calW_k| \ll t^{5k/7} \approx (n-t)^k$
(in the regine $n-t\approx t^{5/7}$),
so we get (again, very roughly) that 
$|\calF[\calW_k]| \ll (n-t-k)! (n-t)^k 
\approx (n-t)!$. 

However, when $k$ is smaller than $t^{2/7-\eps/2}$,
the bound above no longer works.
The main job of \Cref{sec:bounding} is to 
handle this range of $k$ by choosing a specific 
layer to stop peeling and analyzing that layer 
more carefully.

\subsection{Extended Remark on Previous Proofs}
We close this section by briefly explaining what 
Kupavskii's proof does in \cite{one_minus_eps}
and explain the exact range of $n$ and $t$ it works.
\begin{remark}[Spread approximation step in \cite{one_minus_eps}]
\label{rem:loglog}
The spread approximation step in \cite{one_minus_eps} 
works as follows.
\begin{itemize}
\item First, a coarser spread approximation was obtained.
The resulting statement is similar to the statement $A_i$ in
\Cref{lem:iterative},
but the range of $n$ and $t$ is more limited,
and the resulting family is only 
$t'$-intersecting for some $t'$ slightly 
smaller than $t$.
\item Then, the density boost result was proved 
using the peeling argument and \Cref{prop:bound_W_k}.
\item Finally, a more accurate spread approximation 
(same statement as \Cref{thm:accurate_spread_approx},
but in a more limited range of $n$ and $t$) was 
obtained.
\end{itemize}
We explain why the first and second step only work when 
$t\leq n - \frac{Cn\log \log n}{\log n}$.
First, we note that our bound of $|\calW_k|$ in \Cref{cor:bound_W_k}
is essentially the same as their bound of $|\calW_k|$
in \cite[Lem.~11]{one_minus_eps}.
The proof of this requires $k\leq \frac{n-t}{50} = \frac{u}{50}$. 
In particular, it requires the spread approximation 
to take the uniformity (i.e., the maximum size 
of each eleemnt of $\calS$) down to $t+\frac{u}{50}$
because our bound has no control of higher uniformities 
than that.

Now, suppose that we want to find a spread approximation 
with uniformity $q=t+\frac{u}{50}$. 
We would have to run the usual argument as follows:
initiate $\calF_1=\calF$. For each $i\geq 1$, do the following.
\begin{itemize}
\item Take maximal (under inclusion) $S_i$ such that 
$|\calF_i[S_i]| \geq r^{-|S_i|}|\calF_i|$.
\item If $|S_i| > q$, then terminate.
\item Otherwise, let $\calF_{i+1} = \calF_i\setminus\calF_i[\calS_i]$.
\end{itemize}
Now suppose that the algorithm terminates after $N$ steps.
Then we have
$$|\calF\setminus\calF[\calS]| 
\leq |\calF_N| \leq r^q |\calF_N[S_N]| 
\leq r^q (n-q)! \leq r^q (0.98u)!.$$
Wanting this to be less than $u!$, we want (roughly)
$r^q \leq u^{0.02u}$.
However, we also have a critical condition 
in the spread lemma (\Cref{cor:spread_lemma}) that 
$r$ must be at least $2^{12}\log_2(2n)$.
In particular, we must have $0.02 u\log u \geq q\log r 
\gtrsim n\log\log n$,
so we need $u\gtrsim \frac{n\log\log n}{\log n}$
for the argument to work.
This explains why Kupavskii's proof works only up to 
$t\leq n-\frac{C n\log\log n}{\log n}$.
In contrast, our argument in \Cref{sec:spread_approx}
does not rely on peeling and gives a better result.
\end{remark}
\begin{remark}[Peeling step in \cite{one_minus_eps}]
\label{rem:34}
After \Cref{cor:bound_W_k}, Kupavskii's proof proceeds to 
select $k$ such that $|\calW_k|$ is large enough
and use \Cref{prop:bound_W_k} to bound the size $|\calW_k|$.
In particular, Kupavskii needs to show 
$|\calW_k| \leq (1+o(1))\binom tk$.
However, upon considering \Cref{prop:bound_W_k},
one gets that $\sum_{j=0}^k \binom tj \binom kj^2 k^{k-j}$
is close to by $\binom tk$ (which is the term when $j=0$)
only if $j_0=0$ and $f(0)\gg f(1)$; the later happens 
only if $k\ll n^{3/4}$, or $n-t\gg n^{3/4}$
(since $k$ is supposed to be approximately $\frac n{n-t}$
in the equality case).
This explains why Kupavskii's proof 
works only up to exponent $\tfrac 34$,
and getting to $\tfrac 57$ requires a new argument,
which we will introduce in the next section.
\end{remark}

\section{Upper Bound on \texorpdfstring{$|\calT_k|$}{|T\_k|}}
\label{sec:bounding}
In this section, we continue analyzing $\calS$.
In particular, let $\calS\subseteq \binom{[n]^2}{\leq q}$ 
be a $t$-intersecting family, and
let $\calT_k$ and $\calW_k$ be as defined in 
\Cref{subsec:peeling}.
We commit to a choice of $k$ that will show that 
$\calF$ is close to $\calA_k$:
\begin{equation}
\label{eq:cond_k}
\boxed{\text{Let } \mathvocab k \text{ be the largest nonnegative integer 
such that } k\leq t^{2/7- \eps/2} \text{ and } 
|\calW_k| > t^{-1/7}\tbinom tk.}
\end{equation}

For this section, we assume that such a $k$ exists.
(If $k$ does not exist, then the bounds in \Cref{sec:peeling}
suffices.)
We will now use this choice of $k$ to get a sharper bound on 
the size of $|\calT_k|$.
The final result that we will use is \Cref{cor:bound_T_k}.

\subsection{Existence of Good Sets}
The idea of this section is to find sets 
$A_1,A_2,\dots,A_r$ such that $|A_1\cap \dots\cap A_r|$
is as small as possible.
We pick an integer $\mathvocab r$ sufficiently large such that 
$r\geq 100$ and
\begin{equation}
\label{eq:cond_r}
\frac{3r-1}{r-1}\cdot \left(\frac 27 - \frac\eps 2\right)
< \frac 67 - \eps.
\end{equation}
This $r$ exists because as $r\to\infty$, the left 
side converges to $3\cdot \left(\frac 27 - \frac\eps 2\right)
= \frac 67 - \frac 32\eps$,
which is less than $\frac 67-\eps$, so such $r$ exists.
Note that $r$ depends only on $\eps$,
so we will treat it as an absolute constant.
\begin{lemma}
\label{lem:good_A}
Given the above setup, there exist $A_1,\dots,A_r\in\calW_k$ such that 
$$|A_1\cap \dots \cap A_r| = t-(r-2)k.$$
\end{lemma}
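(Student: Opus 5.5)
The lower bound $|A_1\cap\dots\cap A_r|\ge t-(r-2)k$ holds automatically for any $A_1,\dots,A_r\in\calW_k$. Indeed, each $A_i$ has size $t+k$ by \Cref{prop:peeling_properties}(a), and $|A_1\cap A_i|\ge t$, so $A_i$ misses at most $k$ elements of $A_1$. Removing these at most $(r-1)k$ elements from $A_1$ leaves at least $t+k-(r-1)k$. Equality therefore means two things: every $A_i$ ($i\ge 2$) misses exactly $k$ elements of $A_1$, and these missed $k$-sets are pairwise disjoint.

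I would build the sets greedily, maintaining the invariant $|I_m|=t-(m-2)k$ for $I_m:=A_1\cap\dots\cap A_m$. For the base case, $A_1,A_2\in\calW_k$ with $|A_1\cap A_2|=t$ should exist; for instance by running the argument of \Cref{prop:bound_W_k} inside $\calW_k$, or simply as the case $m=1$ of the general step below.

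For the inductive step, given $A_1,\dots,A_m$ with $m<r$, I want $C\in\calW_k$ missing exactly $k$ elements of $I_m$; then $I_{m+1}=I_m\cap C$ has the right size. Such a $C$ misses at most $k$ elements of $I_m\subseteq A_1$. So it suffices to show that the family $\calB_m$ of $C\in\calW_k$ missing at most $k-1$ elements of $I_m$ has $|\calB_m|<|\calW_k|$. We know $|\calW_k|>t^{-1/7}\binom tk$ by the choice \eqref{eq:cond_k} of $k$.

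To bound $|\calB_m|$, set $U=A_1\cup\dots\cup A_m$, so that $|U\setminus I_m|\le 2(m-1)k\le 2rk$. I would encode each $C\in\calB_m$ in three pieces. First, the set $J=I_m\setminus C$ with $|J|=j\le k-1$, giving at most $\binom tj$ choices. Second, $C\cap(U\setminus I_m)$, giving at most $2^{2rk}$ choices. Third, the remainder $C\setminus U$. Because $|C\cap A_1|\ge t$ and $|A_1|=t+k$, the set $C$ has at most $k$ elements outside $A_1$, so $|C\setminus U|\le k$. Conditioning on $X=C\cap U$ and applying \Cref{prop:peeling_properties}(d) bounds the number of completions by $k^{t+k-|X|}$.

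The key bookkeeping is to show that $t+k-|X|$ is small when $j$ is small. I would use the $t$-intersection with each $A_i$ to force $C$ to pick up compensating elements inside $U\setminus I_m$. This should give an exponent of order $j$ plus a bounded multiple of $k$, with the multiple depending only on $r$. Summing over $j\le k-1$ would then give roughly
\[|\calB_m|\lesssim \binom t{k-1}\,(Ck)^{c_r k},\]
and this must be compared with $t^{-1/7}\binom tk\approx t^{-1/7}\frac tk\binom t{k-1}$.

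I expect this comparison to be the main obstacle. A crude encoding loses a factor $k^{O(rk)}$, which is far larger than $t^{6/7}/k$ once $k$ grows. So the counting must be sharpened so that the loss is polynomial rather than $k^{\Theta(k)}$. My first attempt would be to compare each term $j$ against the $j=k$ term, as in \eqref{eq:solve_j}. The point is that missing one fewer element of $I_m$ costs a factor $t/k$ in $\binom tj$, but gains at most a factor $\mathrm{poly}(k)$ from the other two pieces, and $k\le t^{2/7-\eps/2}$ then gives $\mathrm{poly}(k)\cdot k/t\ll t^{-1/7}$. If the compensation argument fails to localize the gain, I would fall back to choosing each new $A_{m+1}$ as an average or typical element of $\calW_k$ with respect to $I_m$, rather than an arbitrary one. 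I would also use that $k$ is the \emph{largest} index satisfying \eqref{eq:cond_k}, so that the higher layers are small and the structure of $\calW_k$ is controlled.
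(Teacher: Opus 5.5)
Your greedy induction fails at its first two steps, and that is where the difficulty of the lemma lies. Your target, a per-unit loss with $\mathrm{poly}(k)\cdot k/t\ll t^{-1/7}$, needs $\mathrm{poly}(k)$ to be essentially at most $k^{2}$. In a greedy step, however, the bad sets $C$ are constrained only by $|C\cap A_i|\ge t$, and early on this is far too weak.

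For $m=1$ the only information about $C\in\calB_1$ is $|C\setminus A_1|\le k-1$. Property (d) then gives merely $\sum_{j<k}\binom{t+k}{j}k^{j}$, which exceeds $\binom tk$ by a factor of roughly $k^k/t$. The argument of \Cref{prop:bound_W_k} does not supply the base case either. It yields $A,B\in\calT_k$ with $|A\cap B|=t$, and property (c) of simplification applied to $A_1\in\calW_k$ only produces a partner $T\in\calT_k$, which may have fewer than $t+k$ elements.

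Even if you are given $A_1,A_2\in\calW_k$ with $|A_1\cap A_2|=t$, the step $m=2$ fails. A set $C$ missing $k-1$ elements of $A_1\cap A_2$ need only contain $k-1$ of the $k$ elements of each of $A_1\setminus A_2$ and $A_2\setminus A_1$, plus one further element outside $A_1\cup A_2$. So your count is about $\binom t{k-1}k^3\approx\frac{k^4}{t}\binom tk$. This exceeds $t^{-1/7}\binom tk$ once $k\gg t^{3/14}$, whereas $k$ may be as large as $t^{2/7-\eps/2}$; it is exactly the two-set barrier of \Cref{rem:34}. From $m=3$ on, the same constraints do give a per-unit loss of about $k^{2m/(m-1)}/t\le k^3/t\ll t^{-1/7}$, so your inductive step would go through there. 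The real obstacle is producing the first three sets.

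The missing idea is to replace the greedy construction by a single global extremal choice. The paper takes $(A_1,\dots,A_r)\in\calW_k^r$ minimizing $|A_1\cap\dots\cap A_r|$ and supposes this equals $t-(r-2)k+x$ with $x>0$. Minimality then constrains \emph{every} $B\in\calW_k$ through the $r$ inequalities $|B\cap\bigcap_{l\neq i}A_l|\ge t-(r-2)k+x$, which is much stronger than $|B\cap A_i|\ge t$. Combined with the bookkeeping on the multiplicities $a_i,b_i$, this shows the following for each $B$:
\begin{itemize}
\item $B$ misses exactly $k-\mu$ elements of $A_1\cap\dots\cap A_r$ for some $\mu\ge x/(r-1)\ge 1$;
\item $B$ misses at most $\frac{r}{r-1}\mu$ of the elements lying in exactly $r-1$ of the $A_i$;
\item $B$ has at most $\frac{x+\mu}{r-1}$ elements outside $A_1\cup\dots\cup A_r$;
\item and $a_1+\dots+a_{r-2}\le 3x\le 3(r-1)\mu$.
\end{itemize}
An encoding just like yours then gives $|\calW_k|\le\binom{t+k}{k}\sum_{\mu\ge1}(r\mu)^r\bigl(2^{3(r-1)}r^2k^{\frac{3r-1}{r-1}}/t\bigr)^{\mu}$. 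By \eqref{eq:cond_r} and $k\le t^{2/7-\eps/2}$ this is $O(t^{-1/7-\eps})\binom tk$, contradicting \eqref{eq:cond_k}. No base case is ever needed, and the maximality of $k$ in \eqref{eq:cond_k} plays no role in this lemma.
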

Before proving the lemma, we discuss some of its consequences.
Note that for any $A_1,\dots,A_r\in\calW_k$, we always have 
$|A_1\cap\dots\cap A_r| \geq t-(r-2)k$.
This is because for any $1\leq i\leq n$, we have
\begin{align*}
|A_1\cap\dots\cap A_r| &\geq |A_i| - |A_i\setminus A_2| 
- \dots - |A_i\setminus A_r| \\
&\geq (t+k) - (r-1)\cdot k \tag{since $|A_i\cap A_j|\geq t$
for all $i$ and $j$} \\
&= t-(r-2)k,
\end{align*}
so the value $t-(r-2)k$ is as small as possible.
Moreover, if $|A_1\cap \dots\cap A_r|=t-(r-2)k$,
then the equalities above hold for all $i$. In particular,
\begin{itemize}
\item the first inequality is an equality for all $i$ only if 
$|A_i\cap A_j|=t$ for all $1\leq i\neq j\leq r$.
\item the second inequality is an equality for all $i$ only if 
$A_i\setminus A_j$ and $A_i\setminus A_\ell$ 
are disjoint for all $i$ and $j\neq \ell$.
In particular, any element in $A_1\cup \dots\cup A_r$
must appear in at least $r-1$ of the sets 
$A_1,\dots,A_r$.
\end{itemize}
Thus, we deduce that for $A_1,\dots,A_r$ in the lemma,
we have $|A_i\cap A_j|=t$ for all $1\leq i\neq j\neq r$
and every element in $A_1\cup\dots\cup A_r$
must appear in at least $r-1$ of the sets 
$A_1,\dots,A_r$.

We now prove this lemma.
\begin{proof}[Proof of \Cref{lem:good_A}]
Across all possible tuples 
$(A_1,\dots,A_r)$ of element in $\calW_k$, we take one 
such that $|A_1\cap\dots\cap A_r|$ is as small as possible.
Assume for contradiction that $|A_1\cap \dots\cap A_r| 
= t-(r-2)k+\mathvocab x$ for some $x>0$.
We let 
$$\mathvocab{a_i} := \text{the number of elements in } 
[n]^2 \text{ that are in exactly }i 
\text{ sets among } A_1,\dots,A_r.$$
In particular, $a_r=|A_1\cap \dots\cap A_r| = 
t-(r-2)k+x$.
We make several observations that we will use.
\begin{itemize}
\item Since $|A_j|=t+k$, summing across all $j$ gives
\begin{equation}
\label{eq:bound_a_1}
a_1 + 2a_2  +\dots + ra_r = r(t+k).
\end{equation}
\item Since $|A_j\cap A_\ell|\geq t$,
summing this across all $1\leq j\neq \ell\leq r$ gives
\begin{equation}
\label{eq:bound_a_2}
\sum_{i=1}^{r-1} i(i-1)a_i \geq r(r-1)t
\end{equation}
(since for each element $x$ counted in $a_i$,
there are $i(i-1)$ choices of $(j,\ell)$ such that 
$x$ is in both $A_j$ and $A_\ell$).
\item We give an upper bound on $a_{r-1}$.
To do this, we use \eqref{eq:bound_a_1} to get 
that $(r-1)a_{r-1}+ra_r \leq r(t+k)$, 
which rearranges to
\begin{equation}
\label{eq:bound_A_r-1}
a_{r-1} \leq \frac{r(t+k)-ra_r}{r-1}
= \frac{r(t+k) - r(t-(r-2)k+x)}{r-1} 
= rk-\frac{rx}{r-1}.
\end{equation}
\item Next, we give an upper bound on $a_1,a_2,\dots,a_{r-2}$.
To do this, we subtract \eqref{eq:bound_a_2} 
from $(r-2)$ times \eqref{eq:bound_a_1}
to obtain that 
\begin{align*}
\sum_{i=1}^{r} i(r-1-i)a_i &\leq r(r-2)(t+k) - r(r-1)t. 
\intertext{In other words,}
\sum_{i=1}^{r-2} i(r-1-i)a_i 
&\leq r(r-2)(t+k) - r(r-1)t + ra_r\\ 
&= ra_r - rt + r(r-2)k \\
&= r(t-(r-2)k+x) - rt + r(r-2)k 
= rx.
\end{align*}
Noting that $i(r-1-i) \geq r-2$ (with equality at 
$i\in\{1,r-2\}$), the above inequality implies
\begin{equation}
\label{eq:bound_small_a}
a_1+\dots+a_{r-2} \leq \frac{r}{r-2}x \leq 3x
\end{equation}
since $r\geq 3$.
\end{itemize}

Fix $B\in\calW_k$. We let 
$$\mathvocab{b_i} := 
\text{the number of elements in }B\text{ appearing in exactly }i 
\text{ sets among } A_1,\dots,A_r.$$
In particular, $b_i\leq a_i$.
Let $\mathvocab m:=k-(a_r-b_r)$, so in particular,
$b_r=t-(r-1)k+x+m$. 
We now make several observations about the $b_i$'s.
\begin{itemize}
\item Since $|B\cap A_i|\geq t$ for each $i$,
summing this across all $i$ gives 
\begin{equation}
\label{eq:bound_b_1}b_1+2b_2+\dots+rb_r \geq rt.
\end{equation}
In particular, subtracting \eqref{eq:bound_b_1} 
from \eqref{eq:bound_a_1} gives 
$$r(a_r-b_r) + (r-1)(a_{r-1}-b_{r-1}) \leq rk,$$
implying that 
\begin{equation}
\label{eq:bound_diff_ab}
a_{r-1}-b_{r-1} \leq \frac{r}{r-1}m.
\end{equation}
\item We have $|B\cap A_2\cap A_3\cap\dots\cap A_r|\geq 
t-(r-2)k+x$ by minimality of $|A_1\cap A_2\cap\dots\cap A_r|$,
and a similar inequality holds if instead of taking $A_1$ away,
we take away any of $A_2$, $A_3$, \dots, or $A_r$.
Summing these $r$ inequalities together gives 
\begin{equation}
\label{eq:bound_b_2}
b_{r-1} + rb_r \geq rt - r(r-2)k + rx
\end{equation}
because each element appearing in $B$ and $r-1$
of the sets $A_1,\dots,A_r$ is counted once,
but each element appearing in $B\cap A_1\cap\dots\cap A_r$
is counted in all of the $r$ intersections.
\item We now give a lower bound on $m$.
Plugging in $b_r=t-(r-1)k+x+m$ in \eqref{eq:bound_b_2} gives
$$b_{r-1}\geq r(k-m).$$
However, $b_{r-1}\leq a_{r-1} \leq rk-\frac{rx}{r-1}$
by \eqref{eq:bound_A_r-1}, so we have 
\begin{equation}
\label{eq:bound_m}
m \geq \frac{x}{r-1}.
\end{equation}
\item 
We give a lower bound on $b_1+b_2+\dots+b_r$.
To do this, observe that
\begin{align*}
(r-1)(b_1+\dots+b_r) 
&\geq b_1 + 2b_2 + \dots + (r-1)b_{r-1} + (r-1)b_r \\
&\geq rt - b_r \tag{using \eqref{eq:bound_b_1}} \\
&= (r-1)(t+k)-x-m,
\end{align*}
which implies that 
\begin{equation}
\label{eq:bound_sum_b}
b_1+\dots+b_r \geq t+k-\frac{x+m}{r-1}.
\end{equation}
(In other words, $B$ contains at most $\frac{x+m}{r-1}$
elements not in $A_1\cup\dots\cup A_r$.)
\end{itemize}
We now count $|\calW_k|$. For a fixed $b_1,\dots,b_r$,
the upper bound on elements in $\calW_k$ with 
these values of $b_1,\dots,b_r$ is given by 
$$\binom{a_r}{b_r} \cdots \binom{a_1}{b_1} 
k^{t+k-b_r-\dots-b_1}$$
Indeed, for each $i$, there are $\binom{a_i}{b_i}$ ways to 
pick elements from those that appear in $i$ sets 
in $A_1,\dots,A_r$. Let $S$ be the union of all those elements.
Once we do this for all $i$, we must pick a set from 
$\calW_k(S)$, so
by \Cref{prop:peeling_properties} (d),
there are at most $|\calW_k(S)| \leq k^{t+k-|S|} 
= k^{t+k-b_r-\dots-b_1}$ ways 
to pick the remaining elements 
(regardless of the choices made earlier).
This justify the bound above.

Using the bounds $\binom{a_r}{b_r}=\binom{a_r}{a_r-b_r}$,
$\binom{a_{r-1}}{b_{r-1}} = \binom{a_{r-1}}{a_{r-1}-b_{r-1}}$,
and $\binom{a_i}{b_i}\leq 2^{a_i}$ for all $i\in\{1,2,\dots,r-2\}$,
we see that the above quantity is at most
\begin{align*}
&\binom{a_r}{a_r-b_r} \binom{a_{r-1}}{a_{r-1}-b_{r-1}}
2^{a_{r-2}+\dots+a_1} k^{t+k-b_r-\dots-b_1} \\
&\leq 
\binom{t+k}{k-m} \binom{rk}{a_{r-1}-b_{r-1}}
2^{3x} k^{(x+m)/(r-1)} \tag{using $a_r\leq t+k$, 
\eqref{eq:bound_A_r-1}, \eqref{eq:bound_small_a}, and \eqref{eq:bound_sum_b}} \\
&\leq \binom{t+k}{k-m} (rk)^{a_{r-1}-b_{r-1}} 2^{3(r-1)m} 
k^{(x+m)/(r-1)} \tag{using \eqref{eq:bound_m}}\\
&\leq \binom {t+k}k \left(\frac{k}{t}\right)^m 
(rk)^{a_{r-1}-b_{r-1}} 2^{3(r-1)m} 
k^{(x+m)/(r-1)} \tag{using \Cref{lem:binom_bound_2}} \\
&\leq \binom {t+k}k k^{\frac{x+m}{r-1} + a_{r-1}-b_{r-1} + m}
\cdot 
\left(\frac{2^{3(r-1)}r^2}{t}\right)^m \tag{using \eqref{eq:bound_diff_ab}}.
\end{align*}
Note that the exponent of $k$ is 
\begin{align*}
\frac{x+m}{r-1}+a_{r-1}-b_{r-1}+m 
&\leq \frac{x+m}{r-1}+ \frac{r}{r-1}m +m 
\tag{using \eqref{eq:bound_diff_ab}} \\
&\leq m + \frac{m}{r-1} + \frac{r}{r-1}m + m 
\tag{using \eqref{eq:bound_m}} \\
&= \frac{3r-1}{r-1}m 
\end{align*}
so using $k < t^{2/7 - \eps/2}$
and \eqref{eq:cond_r}, we get that
\begin{align*}
|\calW_k| &\leq \binom{t+k}k  \sum_{b_1,\dots,b_r} 
t^{\left(\frac 27-\frac{\eps}2\right) \cdot \frac{3r-1}{r-1}\cdot m} \cdot 
\left(\frac{2^{3(r-1)}r^2}{t}\right)^m \\
&\leq 
\binom{t+k}k  \sum_{b_1,\dots,b_r} 
\left(\frac{2^{3(r-1)}r^2}{t^{1/7+\eps}}\right)^m
\end{align*}
Next, we note that for each fixed $m$,
we have that from subtracting 
\eqref{eq:bound_b_1} from \eqref{eq:bound_a_1}
that $(r-1)(a_{r-1}-b_{r-1}) + 
\dots + 2(a_2-b_2)+(a_1-b_1)\leq rm$,
which implies that there are at most $(rm)^r$ choices 
of $(b_1,\dots,b_r)$ given $m$.
We also note from \eqref{eq:bound_m} that $m\geq 1$.
Therefore, we have 
$$|\calW_k| \leq \binom {t+k}k \sum_{m=1}^\infty 
(rm)^r \left(\frac{2^{3(r-1)}r^2}{t^{1/7+\eps}}\right)^m,$$
which is a converges to a finite constant if $t$ is large enough.
Combining with \Cref{lem:close_binomial},
we conclude that $|\calW_k| < Ct^{-1/7-\eps}\binom tk$
for some constant $C$, violating the condition 
$|\calW_k| > t^{-1/7}\binom tk$ in \eqref{eq:cond_k}.
\end{proof}
\subsection{Upper Bound on \texorpdfstring{$|\calT_k|$}{|T\_k|}}
We take $A_1,\dots,A_r$ as in \Cref{lem:good_A},
so $|A_1\cap \dots\cap A_r| = t-(r-2)k$.
Recall from the discussion right after \Cref{lem:good_A}
that any element in 
$A_1\cup \dots\cup A_r$
must appear in at least $r-1$ sets among 
$A_1$, \dots, $A_r$.
We set 
\begin{align*}
\mathvocab U &:= A_1\cap \dots \cap A_r \\
\mathvocab V &:= (A_1\cup \dots\cup A_r) \setminus U,
\end{align*}
so $V$ is the set of elements that appear exactly $r-1$ times 
in $A_1,\dots,A_r$ (by the consequence of 
\Cref{lem:good_A}).
Note that $|U|=t-(r-2)k$ and $|V|=rk$.

Having established nice sets $A_1,\dots,A_r$,
we now upper-bound the size of $|\calF[\calT_k]|$
by counting the number of sets that intersect 
$A_i$ in at least $t$ elements for each $i$.
To do this, define 
$$\mathvocab{\calG_j} := \calT_k \cap \binom{[n]^2}{t+j}
\qquad \text{for each } j\leq k,$$
i.e., $\calG_j$ consists of partial permutations in $\calT_k$
that have $t+j$ elements.
In particular, $\calG_k=\calW_k$.
The following lemma gives an upper bound the size of 
$|\calG_j|$.
\begin{lemma}
\label{lem:bound_G_j}\hspace{0pt}
Assume that $t>t_0(\eps)$ and $k>k_0(\eps)$
for some constants $t_0(\eps)$ and $k_0(\eps)$.
\begin{enumerate}[label=(\alph*)]
\item 
For any $j\leq k-1$, we have 
$|\calG_j| \leq t^{-0.1} \cdot 2^{j-k} \binom tj$.
\item 
We have $|\calW_k| = (1+o(1))\binom tk$.
Furthermore, there are at most $o\left(\binom tk\right)$ 
elements in $\calG_k=\calW_k$ that are not contained 
in $A_1\cup \dots\cup A_r$.
\end{enumerate}
\end{lemma}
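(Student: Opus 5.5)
The plan is to repeat the counting scheme from the proof of \Cref{lem:good_A}, now applied to an arbitrary $C\in\calG_j$ rather than to $B\in\calW_k$. Fix $C\in\calG_j$ with $j\le k$. Since $\calT_k$ is $t$-intersecting and $A_1,\dots,A_r\in\calW_k\subseteq\calT_k$, we have $|C\cap A_i|\ge t$ for every $i$. Let $c_U=|C\cap U|$, let $c_V=|C\cap V|$, and let $c_0=|C\setminus(U\cup V)|$, so that $c_U+c_V+c_0=t+j$. Each element of $V$ lies in exactly $r-1$ of the $A_i$. Summing the $r$ inequalities $|C\cap A_i|\ge t$ therefore gives
\[
r c_U+(r-1)c_V\ \ge\ rt.
\]
Write $c_U=t-(r-2)k-p$ with $p\ge 0$, so $p$ counts the elements of $U$ that $C$ misses. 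Substituting gives $(r-1)c_V\ge r(r-2)k+rp$, that is, $c_V\ge \frac{r(r-2)k+rp}{r-1}$. We also have $c_V\le |V|=rk$ and $c_V\le t+j-c_U=(r-2)k+j+p$.

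Next I bound the number of $C$ with prescribed $(p,c_V)$. There are $\binom{|U|}{p}\le\binom tp$ choices for $C\cap U$ and at most $\binom{rk}{c_V}$ choices for $C\cap V$. The remaining $c_0$ elements are handled by \Cref{prop:peeling_properties}(d), but it is stated for $\calW_k$. For $\calG_j$ I would use the analogous bound $|\calT_k(X)\cap\binom{[n]^2}{t+j}|\le k^{t+j-|X|}$, which follows from \Cref{prop:peeling_properties}(c) by the same proof as (d), since that proof only needs the non-spreadness property (c) of $\calT_k$. This gives a factor $k^{c_0}$.

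The constraints force $c_0=t+j-c_U-c_V\le j+p-\frac{rp}{r-1}+(r-2)k-\frac{r(r-2)k}{r-1}$, which simplifies to $c_0\le j-\frac{p+(r-2)k}{r-1}$. Hence $c_0$ is small unless $p$ is large. Also $\binom{rk}{c_V}=\binom{rk}{rk-c_V}$ with $rk-c_V\le rk-\frac{r(r-2)k}{r-1}=\frac{rk}{r-1}$, which contributes at most $(rk)^{O(k)}$ and must be beaten by the other factors. To compare with $\binom tj$, note that the dominant configurations have $C\cap U$ missing about $p\approx j-k+\ldots$ elements. More precisely, the count is roughly $\binom tp\,(rk)^{rk-c_V}k^{c_0}$, and maximizing over $p$ subject to the constraint $c_V\le(r-2)k+j+p$, together with $c_V\ge rk-\frac{rk}{r-1}$, forces $p\gtrsim$ something like $j$ plus lower-order terms. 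Then $\binom tp$ versus $\binom tj$ gives, via \Cref{lem:binom_bound_2}, factors $(k/t)^{\text{deficit}}$.

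For (a), with $j\le k-1$, I expect each unit of deficit below the extremal configuration to cost a factor $\lesssim k^{3}/t\le t^{-1/7-\eps'}$ (since $k\le t^{2/7-\eps/2}$), exactly as in the final sum of the proof of \Cref{lem:good_A}. That gives $|\calG_j|\le t^{-0.1}2^{j-k}\binom tj$ after summing over the polynomially many $(p,c_V,c_0)$. For (b), set $j=k$. Configurations with $c_0=0$, $c_V$ maximal and $p=k$ give at most $(1+o(1))\binom{t}{k}$ sets, all contained in $A_1\cup\dots\cup A_r$. Any configuration with $c_0\ge1$ loses a factor $t^{-\Omega(1)}$, giving the $o(\binom tk)$ count of sets not contained in the union. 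The matching lower bound $|\calW_k|\ge(1-o(1))\binom tk$ is not immediate from \eqref{eq:cond_k}, which only gives $|\calW_k|>t^{-1/7}\binom tk$. I would try to get it from the maximality of $k$ combined with the counting above, though possibly only the upper bound is actually needed.

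The main obstacle is the bookkeeping showing that every deviation from the extremal pattern costs at least a factor $k^{O(1)}/t$ with exponent below $7/2$ in $k$. This is where the exponent $2/7$ and the choice of $r$ in \eqref{eq:cond_r} enter, and I would mimic the exponent accounting $\frac{3r-1}{r-1}m$ from \Cref{lem:good_A}.
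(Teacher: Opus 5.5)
There is a genuine gap: your framework is right, but the step that actually proves the lemma is missing, and where you sketch it the key inequality points the wrong way.

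Your setup coincides with the paper's. Its $x$ is your $(r-2)k+p$, and its $y\ge\frac{rx}{r-1}$ is your $c_V\ge\frac{r(r-2)k+rp}{r-1}$. The three-factor count is also the same: choose from $U$, choose from $V$, then at most $k^{c_0}$ completions via the spread argument from \Cref{prop:peeling_properties}(c).

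The error is this. You combine $c_V\le(r-2)k+j+p$ with the weakened bound $c_V\ge\frac{r(r-2)k}{r-1}$. That drops the $\frac{rp}{r-1}$ term, so it says nothing useful about $p$. You then assert $p\gtrsim j$, which would make $\binom tp\gtrsim\binom tj$ and leave nothing to save.

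Keep the full bound instead. Note that $c_V=j+(r-2)k+p-c_0$ is determined by $(p,c_0)$. Substituting this into $c_V\ge\frac{r(r-2)k+rp}{r-1}$ gives the \emph{upper} bound
\[
p\ \le\ p^*:=(r-1)(j-c_0)-(r-2)k\ =\ j-(r-2)(k-j)-(r-1)c_0 ,
\]
which is the paper's $x\le(r-1)j$. It also gives $rk-c_V=r(k-j+c_0)+(p^*-p)$.

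So the number of $C$ with given $(p,c_0)$ is at most $\binom tp\,(rk)^{r(k-j+c_0)+(p^*-p)}k^{c_0}$. By \Cref{lem:binom_bound_2}, $\binom tp\le\binom tj(k/t)^{(r-2)(k-j)+(r-1)c_0+(p^*-p)}$. Hence:
each unit of $p^*-p$ costs a factor $rk^2/t$;
each unit of $c_0$ costs $r^rk^{2r}/t^{r-1}$;
each unit of $k-j$ costs $r^rk^{2r-2}/t^{r-2}$.

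Summing these geometric terms over the $O(k^2)$ pairs $(p,c_0)$ gives $|\calG_j|\le(1+o(1))\binom tj\,(r^rk^{2r-2}/t^{r-2})^{k-j}$, which is (a). For $j=k$ the same computation shows that the sets with $c_0\ge1$ number only $o(\binom tk)$. These are exactly the sets not contained in $U\cup V=A_1\cup\dots\cup A_r$, which is (b).

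Your substitute for this computation, ``each unit of deficit costs $k^3/t$, exactly as in \Cref{lem:good_A},'' is an analogy rather than an argument, and it misplaces the mechanism. Here the saving comes from the forced deficit $(r-2)(k-j)$ in the $U$-index, traded against the factor $(rk)^{r(k-j)}$ from $V$. The relevant ratio $k^{2r-2}/t^{r-2}$ is already tiny for $k\le t^{2/7}$ once $r\ge4$, and \eqref{eq:cond_r} plays no role in this lemma.

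Your caution about the lower bound in (b) is justified. The paper's argument likewise only gives $|\calW_k|\le(1+o(1))\binom{t-(r-2)k}{k}\le(1+o(1))\binom tk$, and only this direction is used later.
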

\begin{proof}
We do most of our work in (a) and (b) together.
In what follows, $j$ ranges through $0,1,2,\dots,k$.

Fix an element $G\in\calG_j$.
Let $t-x=|G\cap U|$ and $y=|G\cap V|$. 
By summing $|G\cap A_i|\geq t$ across all $i$, we get that 
$$r(t-x) + (r-1)y \geq rt \implies 
y \geq \frac{rx}{r-1}.$$
In other words, $G$ must contain at least 
$\big\lceil \frac{rx}{r-1}\big\rceil$
elements from $V$. Therefore, 
we upper-bound the number of possible $G$'s by casework on $x$
and noting the following observations.
\begin{itemize}
\item There are $\binom{t-(r-2)k}{t-x}$ ways to pick $t-x$ elements from $U$
to include in $G$ (in particular, $x\geq (r-2)k$);
\item There are $\binom{rk}{\left\lceil rx/(r-1)\right\rceil}$
ways to pick $\big\lceil \frac{rx}{r-1}\big\rceil$ elements from $V$
to include in $G$; and 
\item Let $X$ be the set of 
the $t-x+\big\lceil \frac{rx}{r-1}\big\rceil
= t+\big\lceil \frac{x}{r-1}\big\rceil$ elements
picked so far. 
We claim that $|\calG_j(X)| \leq k^{t+j-|X|} = 
k^{j-\lceil x/(r-1)\rceil}$,
which will imply that there are at most $k^{j-\lceil x/(r-1)\rceil}$
ways to pick the remaining elements.

Indeed, assume that $|\calG_j(X)| > \alpha^{t+j-|X|}$
for some $\alpha>k$.
Then utilizing \Cref{obs:finding_spread},
we select maximal set $Y\supseteq X$ such that 
$|\calG_j(Y)| \geq \alpha^{|X|-|Y|}|\calG_j(X)|$.
Then $\calG_j(Y)$ is $\alpha$-spread.
Moreover,
 $|\calG_j(Y)| \geq \alpha^{|X|-|Y|}|\calG_j(X)|
 > \alpha^{t+j-|Y|} \geq 1$.
This contradicts \Cref{prop:peeling_properties} (c)
(applied on $\calY = \calG_j$
and $Y$ in place of $X$).
\end{itemize}
Therefore, we have 
$$|\calG_j| \leq  
\sum_{x=(r-2)k}^{(r-1)j} \underbrace{
\binom{t-(r-2)k}{t-x} \binom{rk}{\left\lceil rx/(r-1)\right\rceil}
k^{j-\left\lceil x/(r-1)\right\rceil}}_{=:f(x)}.$$
Let $f(x)$ denote the term inside the summation.
We note the following two observations:
\begin{enumerate}
\item First, for all $x\geq (r-2)k$, we have that
\begin{align*}
\frac{f(x+r-1)}{f(x)} 
&= \frac{\binom{t-(r-2)k}{t-x-r+1}
\binom{rk}{\lceil rx/(r-1)\rceil+r}}
{\binom{t-(r-2)k}{t-x}\binom{rk}{\lceil rx/(r-1)\rceil}}
\cdot \frac 1k\\
&\asymp_r  
\frac{t^{r-1}}{(x-(r-2)k)^{r-1}}\cdot 
\frac{\left(r\left(k-\frac{x}{r-1}\right)\right)^r}{
    \left(\frac{r}{r-1}x\right)^r}
\cdot \frac 1k \\
&\gtrsim_r
\frac{t^{r-1}}{k^{r-1}}\cdot \frac 1{k^r} 
\cdot \frac 1k  \\
&= \frac{t^{r-1}}{k^{2r}} \gg 1.
\end{align*}
(Here, the asymptotic notations are taken so that 
$r$ is fixed, but $t$ and $k$ vary. 
In other words, the hidden constants may depend on $r$.)
We deduce that when $t$ and $k$ are large,
we have $f(x+r-1) \gg f(x)$ for all $x$.
\item 
Second, if $x$ is not divisible by $r-1$, then
\begin{align*}
\frac{f(x+1)}{f(x)}
&= \frac{\binom{t-(r-2)k}{t-x-1}\binom{rk}{rx/(r-1)+1}}
{\binom{t-(r-2)k}{t-x}\binom{rk}{rx/(r-1)}}
\cdot \frac 1k\\
&\asymp_r
\frac{(t-x)}{(x+1-(r-2)k)} \cdot 
\frac{r\left(k-\frac{x}{r-1}\right)}{\frac{rx}{r-1}} 
\gtrsim_r \frac{t}{k} \cdot \frac{1}{k} 
= \frac{t}{k^2} \gg 1.
\end{align*}
\end{enumerate}
In particular, if $x_0$ is such that $f(x_0)$ is maximum,
then (2) implies that $x_0$ must be divisible by $r-1$
(because otherwise $f(x_0+1)$ has higher value),
so (1) implies that $x_0=(r-1)j$ (otherwise $f(x_0+(r-1))$
has higher value).
Thus, the term $f((r-1)j)$ is the maximum term.
Moreover, this term dominates the rest. In particular,
\begin{equation}
\label{eq:bound_G_j}
|\calG_j| \leq (1+o(1)) \binom{t-(r-2)k}{t-(r-1)j} 
\binom{rk}{rj}
\leq (1+o(1)) \binom{t}{j-(r-2)(k-j)} 
\binom{rk}{rj}
\end{equation}
for all $j\in\{0,1,2,\dots,k\}$,
where we used \Cref{lem:close_binomial}
to change $t-(r-2)k$ to $t$ without significantly
affecting the value.
Now, we prove each part of the lemma.
\begin{enumerate}[label=(\alph*)]
\item If $j\leq k-1$, note that by \Cref{lem:binom_bound_2}, we have
\begin{align*}
|\calG_j| &\leq (1+o(1)) \binom tj \cdot 
\left(\frac tj-1\right)^{-(r-2)(k-j)} 
\cdot k^{r(k-j)} \\
&\leq (1+o(1))\binom tj 
\left(\frac{k^{2r-2}}{t^{r-2}}\right)^{k-j} 
\leq t^{-0.1} 2^{j-k} \binom tj
\end{align*}
since $k \ll t^{2/7}$ and $r\geq 100$
(as chosen in \eqref{eq:cond_r}),
which implies that $k^{2r-2} \leq \frac 12 t^{r-2}$.
\item By plugging in $j=k$ in \eqref{eq:bound_G_j}, we have 
$$|\calW_k| = |\calG_k| = 
(1+o(1)) \binom tk,$$
proving the first part.
To prove the second part, we note that in the case 
where $x=(r-1)k$, we pick $j-\big\lceil \frac x{r-1}\big\rceil 
= k-k = 0$ elements outside $U$ and $V$.
In particular, the count contributed by $x=(r-1)k$
all come from sets contained in $A_1\cup\dots\cup A_r$.
As we have seen, the remaining values of $x$ contribute 
at most $o\left(\binom tk\right)$ to the sum,
so there are at most $o\left(\binom tk\right)$ elements in 
$\calW_k$ that are not contained in $A_1\cup \dots \cup A_r$,
proving the final statement.
\qedhere
\end{enumerate}
\end{proof}

The following corollary, which we will use in 
\Cref{sec:conclusion},
counts the number of permutations that contains 
some element in $\calT_k'$.
We compare this to the size of family $\calA_j$
(defined in \Cref{subsec:bound_A_k}),
which we aim to prove that it is an optimal family.
We recall from \Cref{lem:bound_A_k} that if 
$j=o(u)$, then
$$|\calA_j| < (1+o(1)) \binom{t+2j}j (n-t-j)!,$$
and from \Cref{lem:close_binomial}, if $j=o(\sqrt t)$
(which will be the case if $j\leq k < t^{2/7-\eps/2}$),
then we have $\binom{t+2j}j = (1+o(1))\binom tj$, so
$$|\calA_j| < (1+o(1)) \binom tj (n-t-j)!.$$
\begin{corollary}
\label{cor:bound_T_k}
Assume that $t>t_0(\eps)$ and $k>k_0(\eps)$
for some constants $t_0(\eps)$ and $k_0(\eps)$.
Assume also that $k=o(u)$.
Let $\calT_k'=\calT_k \setminus \binom{A_1\cup\dots\cup A_r}{t+k}$.
Then for any $\calF\subseteq \Sigma_n$, we have
$$|\calF[\calT_k']| = o\left(\max_j
|\calA_j|\right).$$
(Recall that in the max, $j$ ranges from $0$ to 
$\left\lfloor\frac{n-t}2\right\rfloor$.)
\end{corollary}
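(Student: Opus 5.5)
We bound $|\calF[\calT_k']|$ by $|\Sigma_n[\calT_k']|$, i.e.\ by the trivial count $\sum_{T\in\calT_k'}(n-|T|)!$, and split $\calT_k'$ by size. By \Cref{prop:peeling_properties}(a), every $T\in\calT_k$ has size $t+j$ for some $0\le j\le k$. So $\calT_k'$ is the disjoint union of $\calG_0,\dots,\calG_{k-1}$ together with $\calW_k\setminus\binom{A_1\cup\dots\cup A_r}{t+k}$. This gives
\[
|\calF[\calT_k']|\le \sum_{j=0}^{k-1}|\calG_j|\,(u-j)! \;+\; \Bigl|\calW_k\setminus\tbinom{A_1\cup\dots\cup A_r}{t+k}\Bigr|\,(u-k)!.
\]

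The top layer is handled directly by \Cref{lem:bound_G_j}(b). It gives $o\bigl(\binom tk\bigr)(u-k)!$, since $k\le t^{2/7-\eps/2}=o(\sqrt t)$ and $k=o(u)$. By \Cref{lem:bound_A_k} and \Cref{lem:close_binomial} this equals $o(|\calA_k|)$. Here $k\le \lfloor u/2\rfloor$ because $k=o(u)$, so $\calA_k$ is one of the families in the maximum.

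For the lower layers we use \Cref{lem:bound_G_j}(a), which gives
\[
\sum_{j=0}^{k-1}|\calG_j|(u-j)!\le t^{-0.1}\sum_{j=0}^{k-1}2^{j-k}\tbinom tj(u-j)!\le t^{-0.1}\max_{0\le j<k}\tbinom tj(u-j)!\cdot\sum_{i\ge1}2^{-i}.
\]
For each $j<k$ we have $j=o(u)$ and $j=o(\sqrt t)$. So again by \Cref{lem:bound_A_k} and \Cref{lem:close_binomial}, $\binom tj(u-j)!=(1+o(1))\binom{t+2j}{j}(u-j)!=(1+o(1))|\calA_j|\le(1+o(1))\max_i|\calA_i|$. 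The geometric factor $2^{j-k}$ is exactly what removes any dependence on the number of layers. Since $t^{-0.1}\to0$, this part is also $o(\max_j|\calA_j|)$.

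Adding the two bounds proves the corollary. The main point to check is that the comparisons $\binom tj(u-j)!\asymp|\calA_j|$ hold uniformly in $j\le k$. This is where the hypotheses $k=o(u)$ and $k<t^{2/7-\eps/2}$ enter. We need the $o(1)$ terms in \Cref{lem:bound_A_k} and \Cref{lem:close_binomial} to be uniform over $j\le k$, which follows from their proofs since the error terms are monotone in $j$. Apart from that, the argument only uses the two parts of \Cref{lem:bound_G_j}.
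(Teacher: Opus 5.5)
Your proposal is correct and follows essentially the same route as the paper. Like the paper, you split $\calT_k'$ into the lower layers $\calG_0,\dots,\calG_{k-1}$ and the top-layer remainder $\calW_k\setminus\binom{A_1\cup\dots\cup A_r}{t+k}$, charge each set $(n-t-j)!$, and apply \Cref{lem:bound_G_j}(a),(b) with \Cref{lem:bound_A_k} and \Cref{lem:close_binomial} to compare against $|\calA_j|$, using the geometric factors $2^{j-k}$ to sum the lower layers; your explicit remark on uniformity of the $o(1)$ terms over $j\le k$ is left implicit in the paper but does not change the argument.
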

\begin{proof}
Let $\calW_k' = \calW_k \setminus \binom{A_1\cup\dots\cup A_r}{t+k}$.
Therefore, by \Cref{lem:bound_G_j} (b),
we get that $|\calW_k'| = o\left(\binom tk\right)$.
Since $\calT_k'$ is contained in $\calW_k'\cup 
\bigcup_{j=0}^{k-1}\calG_j$, we have
\begin{align*}
|\calF[\calT_k']| 
&\leq |\calF[\calW_k']| + \sum_{j=0}^{k-1} |\calF[\calG_j]| \\
&\leq (n-t-k)! |\calW_k'| + \sum_{j=0}^{k-1} (n-t-j)! |\calG_j| \\
&\leq o\left((n-t-k)! \binom tk\right) 
+ \sum_{j=0}^{k-1}  \left(\frac 12\right)^{k-j} \binom tj(n-t-j)!
\tag{using \Cref{lem:bound_G_j}} \\
&\leq o\left(|\calA_k|\right)  
+ t^{-0.1}\left(\sum_{j=0}^{k-1} \left(\frac 12\right)^{k-j}
\right) (1+o(1))|\calA_j| 
\tag{using \Cref{lem:close_binomial} and \Cref{lem:bound_A_k}}\\
&\leq o\left(|\calA_k|\right) + 
(2+o(1))t^{-0.1} \max_j |\calA_j| 
\tag{geometric series}\\ 
&= o\left(\max_j |\calA_j|\right). \qedhere
\end{align*}
\end{proof}

We close this section by discussing why we 
speculate that the argument in this section has 
a potential for improvement.
\begin{remark}
\label{rem:optimizing}
At a very high level, the argument in this section 
has two steps.
\begin{enumerate}
\item Find sets $A_1,\dots,A_r$ with a ``nice'' structure.
\item Upper bound the number of sets $B$ such that 
$|B\cap A_i| \geq t$ for all $i$.
\end{enumerate}
The key difficulty of optimizing this argument 
is to find the right definition of what constitutes
``nice'' in (1).
On one hand, we want it to be weak enough 
so that (1) works at a wide range of $k$.
On the other hand, we want it to be strong enough 
so that (2) can be carried out.
In our argument, ``nice'' means $|A_1\cap\dots\cap A_r|=t-(r-2)k$.

We believe that the condition 
$|A_1\cap\dots\cap A_r|=t-(r-2)k$ is quite strong,
to the point that it forces every element to appear in 
$A_1,\dots,A_r$ either $0$, $r-1$ or $r$ times.
In particular, we find it interesting that as 
$r$ grows larger, the argument works on a larger range 
of $k$, but the existence of such $r$ sets 
$A_1,\dots,A_r$ also guarantees the existence of $r-1$
sets (one can check that $|A_1\cap\dots\cap A_r|=t-(r-2)k$
implies $|A_1\cap \dots \cap A_{r-1}|=t-(r-3)k$).
In short, as $r$ goes larger, the argument works 
for larger range of $k$ but also produces a tuple of 
sets with a stronger condition.
We believe that the right condition should not
become strictly stronger as $r$ increase.
If we can find the right weaker condition,
then we may be able to replace the exponent $\frac 57$
with a smaller one.
\end{remark}

\section{Proof of \texorpdfstring
    {\Cref{thm:not_too_close}}{Theorem \ref{thm:not_too_close}}}
\label{sec:conclusion}
We now put everything together and prove 
\Cref{thm:not_too_close}.
We reproduce the theorem statement below.
\main*
\begin{proof}[Proof of \Cref{thm:not_too_close}]
Note that (a) follows from (b), so we focus on proving (b).
We prove the contrapositive:
let $\calF$ be a $t$-intersecting family of $\Sigma_n$
such that
$|\calF| \geq \left(1-\tfrac 1e+o(1)\right)\max_k |\calA_k|$.
We aim to show that $\calF$ is contained in 
$\sigma\calA_k\tau$ for some $\sigma,\tau\in\Sigma_n$.

If $t<n^{0.99}$, then the theorem follows from 
a previous result such as \cite{spread_approx}.
Thus, assume $t\geq n^{0.99}$.
We first apply \Cref{thm:accurate_spread_approx}
to get that there exists a set $\calS$ 
consisting of partial permutations of size at most $t+0.001u$
such that $\calS$ is $t$-intersecting 
and $|\calF\setminus\calF[\calS]| < \frac 1n\cdot u!$.
Note that since $|\calF| \geq \left(1-\tfrac 1e+o(1)\right)|\calA_0| 
= \left(1-\tfrac 1e+o(1)\right)(n-t)!>\frac 12 u!$,
we get that 
\begin{equation}
\label{eq:outside_spread_approx}
|\calF\setminus\calF[\calS]| 
< \frac 2n |\calF| = o(|\calF|)
\end{equation}

Using this $\calS$, we set up 
$\calT_j$ and $\calW_j$ for $j=q-t,q-t-1,\dots,1$
as in \Cref{subsec:peeling}.
As in \Cref{sec:bounding}, we select the largest 
$k$ such that $k\leq t^{2/7-\eps/2}$ 
and $|\calW_k| > t^{-1/7} \binom tk$.
We first deal with the main case that such $k$ 
exists, leaving the edge case for later.

\textbf{If such a $k$ exists,}
then we first note that by applying \Cref{prop:peeling_properties} (b)
repeatedly, we have 
$$\calF[\calS] = \calF[\calT_k] \cup 
\bigcup_{j=k+1}^{q-t} \calF[\calW_j].$$

Using \Cref{lem:good_A}, 
pick $A_1,\dots,A_r\in\calW_k$ such that 
$|A_1\cap\dots\cap A_r|=t-(r-2)k$.
In particular, $|A_1\cup \dots\cup A_r| = t+2k$.
We show that most elements in $\calF[\calS]$ 
must contain a $(t+k)$-element subset of $A_1\cup\dots\cup A_r$.
To do this, we bound the number of elements that 
do not satisfy this property.
\begin{itemize}
\item \textbf{Elements in $\calF[\calW_j]$ for 
$t^{2/7-\eps/2} \leq j \leq q-t$.}
Note that $q-t =  0.001(n-t)$.
In this regime, we use \Cref{cor:bound_W_k}
to get that 
\begin{align*}
|\calF[\calW_j]| &\leq \left(200\max(j,\tfrac tj)\right)^j 
\cdot (n-t-j)! \\
&\leq \left(\frac u3\right)^j 
\cdot \frac{(n-t)!}{(0.99u)^{j}} 
\tag{since $u>t^{5/7+\eps}>600\max(j,t/j)$}\\
&\leq 2^{-j} (n-t)!,
\end{align*}
which implies by summing that 
$$\sum_{j=t^{2/7-\eps/2}}^{q-t} |\calF[\calW_j]|\leq 
(n-t)! = o\left(\max_j |\calA_j|\right).$$
\item \textbf{Elements in $\calF[\calW_j]$
for $k < j <  t^{2/7-\eps/2}$.}
In this case, we use minimality of $k$ to deduce that 
$|\calW_j| < t^{-1/7} \binom tj$ for all such $j$.
Therefore, for all $k\leq j < t^{2/7-\eps/2}$, 
we have that
$$|\calF[\calW_j]| \leq t^{-1/7} \binom tj (n-t-j)!.$$
Summing and applying \Cref{lem:bound_sum_A} 
gives 
\begin{align*}
\sum_{k<j < t^{2/7-\eps/2}} |\calF[\calW_j]| 
&\leq t^{-1/7}\sum_{0\leq j < t^{2/7-\eps/2}} \binom tj (n-t-j)! \\
&\leq t^{-1/7}\sum_{0\leq j\leq u^{0.99}} \binom tj (n-t-j)! 
\tag{since $u \gg t^{5/7}$} \\
&\leq t^{-1/7} \sqrt{\frac tu} \binom t{j_0} (n-t-j_0)!
\tag{using \Cref{lem:bound_sum_A}} \\
&\leq o\left(\binom t{j_0} (n-t-j_0)!\right)
= o(|\calA_{j_0}|) = o\left(\max_j |\calA_j|\right),
\end{align*}
where the last line uses \Cref{lem:close_binomial}
and \Cref{lem:bound_A_k}.
(Note that the assumption  
$t\geq n-t$ in \Cref{lem:bound_sum_A} 
does not pose a problem 
because if $t<n-t$, the sum of $\binom tj (n-t-j)!$ 
is $0$, so the result holds regardless.)
\item \textbf{Elements of $\calF[\calT_k]$.}
From \Cref{cor:bound_T_k}, we see that the number of 
elements of $\calF[\calT_k]$ that does not contain 
a $(t+k)$-element subset of $A_1\cup\dots\cup A_r$ 
is $o\left(\max_j |\calA_j|\right)$.
\end{itemize}
Hence, the number of elements in 
$\calF[S]$ that do not contain 
a $(t+k)$-element subset of $A_1\cup\dots\cup A_r$ is at most 
$o\left(\max_j|\calA_j|\right) = o(|\calF|)$.
Combining this with \eqref{eq:outside_spread_approx}
gives that at most $o(1)$-fraction of elements in $\calF$ 
do not contain 
a $(t+k)$-element subset of $A_1\cup \dots\cup A_r$.
In other words, 
at least $1-o(1)$ fraction of elements in $\calF$ 
contain a $(t+k)$-element subset of $\mathvocab T
:=A_1\cup \dots\cup A_r$.

If $T$ contains two elements $x,y\in [n]^2$ 
with the same first coordinate,
then any element in $\calW_k$ cannot contain 
both $x$ and $y$.
Among elements in $\calW_k$ that are contained in $T$,
$\binom{t+2k-2}{t+k-2}
= \binom{t+2k-2}k$ of those contain 
both $x$ and $y$.
Combining with at most $o\left(\binom tk\right)$
elements that may not be contained in $T$, we get that 
$$|\calW_k| \leq 
\binom{t+2k}k - \binom{t+2k-2}k
+ o\left(\binom tk\right)  
= o\left(\binom tk\right)$$
In particular, by accounting for all other elements,
we have that $|\calF| = o\left(\binom tk(n-t-k)!\right)
+ o\left(\max_j |\calA_j|\right) = o\left(\max_j |\calA_j|\right)$,
which is a contradiction.

Henceforth, we assume that all elements of $T$  
have distinct first coordinates.
Similarly, assume that every element of $T$ 
has distinct second coordinates.
By relabeling elements appropriately, we assume 
$$T = \{(1,1), (2,2), \dots, (t+2k, t+2k)\},$$
so any permutation that contains a set in $\binom T{t+k}$
is in $\calA_k$.
Thus, $|\calF\setminus\calA_k| = o(|\calA_k|)$.

Now, assume that $|\calF| > \left(1-\tfrac 1e + o(1)\right) 
\max_k |\calA_k|$ and that $\calF\not\subseteq \calA_k$.
Thus, there exists a permutation 
$\sigma\in\calF$ such that $\sigma$ fixes at most
$t+k-1$ elements in $[t+2k]$.
Let $X$ be the set of fixed points in $[t+2k]$ of $\sigma$. 
We find a lot of permutations $\pi$ for which 
$|\sigma\cap\pi| \leq t-1$ as follows:
\begin{itemize}
\item Pick a set $Y\subseteq [t+2k]$ such that 
$|Y|=t+k$ and $|X\cap Y| \leq t-1$.
We claim that there are at least $\binom tk$ such sets.
To see why, we split into two cases.
\begin{itemize}
\item If $|X|\leq t-1$, then this is automatic 
regardless of what set $Y$ we pick,
so there are $\binom{t+2k}k > \binom tk$ sets.
\item If $|X|\geq t$, then we can pick $Y$
by selecting $|X|-k$ elements from set $X$
and selecting all other $t+2k-|X|$ elements not in $X$.
This gives at least $\binom{|X|}{|X|-k} \geq \binom tk$
possible sets.
\end{itemize}

\item Let $\pi$ be the permutation that fixes $Y$ 
and $\sigma(i)\neq \pi(i)$ for all $i\in [n]\setminus Y$.
There are at least $\tfrac 1e(n-t-k)!-1$ such permutations
because it corresponds to the number of derangements 
on $[n]\setminus Y$, which has size $n-t-k$.
(Recall that the number of derangements 
of $[m]$ can be counted by inclusion-exclusion 
principle to get exactly 
$m!\left(\sum_{j=0}^m \frac{(-1)^j}{j!}\right)$, which is at least $\frac{m!}e-1$.)
\end{itemize}
For all choices of $\pi$ obtained as above,
we have $|\pi\cap\sigma|\leq t-1$.
Therefore, we find at least $\left(\tfrac 1e-o(1)\right) 
\binom tk (n-t-k)! 
\geq \left(\tfrac 1e - o(1)\right)|\calA_k|$ 
permutations that intersect $\sigma$ at at most $t-1$ entries.
None of those permutations can be in $\calF$,
so $|\calF\cap\calA_k| \leq \left(1-\tfrac 1e+o(1)\right)|\calA_k|$.
Hence, $|\calF| \leq \left(1-\tfrac 1e + o(1)\right)|\calA_k|$ 
as desired.

\textbf{If there is no $k$ satisfying \eqref{eq:cond_k},}
then this means that for all $k\leq t^{2/7-\eps/2}$,
we have $|\calW_k| \geq t^{-1/7}\binom tk$.
By applying \Cref{prop:peeling_properties} (b) 
repeatedly, we have 
$$\calF[\calS] = 
\bigcup_{j=0}^{q-t} \calF[\calW_j].$$
In the case where $j\geq t^{2/7-\eps/2}$,
we use the same argument as the proof above to get that
$$\sum_{j=t^{2/7-\eps/2}}^{q-t} |\calF[\calW_j]|
= o\left(\max_j |\calA_j|\right).$$
In the case where $j<t^{2/7-\eps/2}$,
we have that $|\calW_j| < t^{-1/7}\binom tj$
for all $j < t^{2/7-\eps/2}$.
Thus, the same argument as the above proof 
(using \Cref{lem:bound_sum_A}) gives 
$$\sum_{j<t^{2/7-\eps/2}} |\calF[\calW_j]| 
= o\left(\max_j |\calA_j|\right).$$

Summing these two equations gives 
$$|\calF[\calS]| \leq 
\sum_{j=0}^{q-t} |\calF[\calW_j]|
= o\left(\max_j |\calA_j|\right),$$
and since we also have from \eqref{eq:outside_spread_approx}
that $|\calF\setminus\calF[\calS]| = o\left(\max_j|\calA_j|\right)$,
we get that $|\calF| = o\left(\max_j|\calA_j|\right)$,
which is a contradiction to what we assumed that 
$|\calF| \geq \left(1-\frac 1e+o(1)\right) 
\max_j|\calA_j|$.
\end{proof}
We make a final remark of why $\frac 57$ was chosen 
as the best exponent achievable by our technique.
\begin{remark}
\label{rem:why_57}
Suppose we want to prove the result for $t=n-n^{\alpha}$
for some $\alpha\in (0,1)$. 
\begin{itemize}
\item We will be able to use \Cref{cor:bound_W_k} 
to bound the size of $\calF[\calW_j]$
only for $j\geq n^{1-\alpha}$.
\item Thus, in \Cref{sec:bounding}, we have 
to assume that $k\leq t^{1-\alpha}$.
\item Since we are summing the upper bound of $|\calW_k|$,
we must take the largest $k$ such that 
$|\calW_k| > t^{-\frac{1-\alpha}2}\binom tk$
so that the application of \Cref{lem:bound_sum_A} works.
\item If one looks at the proof of \Cref{lem:good_A},
we see that if $A_1,\dots,A_r$ does not exist, then 
the upper bound of $|\calW_k|$ is given by 
$$|\calW_k| < \binom tk \frac{k^{\frac{3r-1}{r-1}}}t,$$
which converges to $\binom tk\cdot \frac{k^3}t$
as $r\to\infty$. (Here we take the term with $m=1$,
which dominates.)
Thus, we need 
$$\frac{k^3}t < t^{-\frac{1-\alpha}2} 
\iff t^{3(1-\alpha) - 1} < t^{-\frac{1-\alpha}2},$$
which solves to $\alpha > \frac 57$.
\end{itemize}
Therefore, $\frac 57$ is the best exponent that 
can be done by our technique.
\end{remark}

\printbibliography
\end{document}